\documentclass[11pt]{article}
\usepackage[numbers,sort&compress]{natbib}
\usepackage{appendix,diagbox,booktabs,float}
\usepackage{enumerate,makecell}
\usepackage{tikz}
\usepackage{pgfplots}
\usepackage{amscd}
\usepackage{amsmath}
\usepackage{latexsym}
\usepackage{amsfonts}
\usepackage{amssymb}
\usepackage{amsthm}
\usepackage{bm}
\usepackage{verbatim}
\usepackage{mathrsfs}
\usepackage{enumerate}
\usepackage{hyperref}

\theoremstyle{plain}
\theoremstyle{definition}\newtheorem{theorem}{Theorem}[section]
\theoremstyle{plain}\newtheorem{lemma}[theorem]{Lemma}
\theoremstyle{plain}\newtheorem{coro}[theorem]{Corollary}
\theoremstyle{plain}
\theoremstyle{remark}\newtheorem{remark}{Remark}[section]
\usepackage{xcolor}

\newcommand{\Div}{\mathrm{div}\,}
\newcommand{\B}{\Big}

\newcommand{\be}{\begin{equation}}
\newcommand{\ee}{\end{equation}}
 \newcommand{\ba}{\begin{aligned}}
 \newcommand{\ea}{\end{aligned}}

  \newcommand{\f}{\frac}
    
  \newcommand{\ben}{\begin{enumerate}}
   \newcommand{\een}{\end{enumerate}}

\newcommand{\te}{\text}

\newcommand{\Rmnum}[1]{\expandafter\@slowromancap\romannumeral #1@}

\allowdisplaybreaks

\numberwithin{equation}{section}
\begin{document}
\title{
On  space-time derivative estimates for the fractional Navier-Stokes equations
  }
\author{Yanqing Wang\footnote{School of Mathematics and   Information Science, Zhengzhou University of Light Industry, Zhengzhou, Henan  450002,  P. R. China Email: wangyanqing20056@gmail.com},~
    Wei Wei\footnote{Corresponding author. School of Mathematics and Center for Nonlinear Studies,  Northwest University, Xi'an, Shaanxi 710127,  P. R. China.  Email: ww5998198@126.com },~~~Gang Wu \footnote{School of Mathematical Sciences, University of Chinese Academy of Sciences, Beijing 100049, P. R.
China Email: wugang2011@ucas.ac.cn} ~ and~~~Daoguo Zhou\footnote{School of Mathematics and Information Sciences, Henan Polytechnic University, Jiaozuo, Henan 454000, P. R. China Email:
zhoudaoguo@gmail.com }    }
\date{}
\maketitle
\begin{abstract}
 In this paper, we are concerned with space-time derivative estimates of solutions to the
  fractional Navier-Stokes equations.
 It is shown that  $\Lambda^{n\alpha}u^{(m)}_{t}  \in L^{\f{2(6\alpha-5)}{4m\alpha+2n\alpha+4 \alpha-5}}(0,T; L^{2}(\mathbb{R}^{3}))$  and $  \Lambda^{n }u^{(m)}_{t}  \in L^{\f{2(6\alpha-5)}{4m\alpha+2n +4 \alpha-5}}(0,T; L^{2}(\mathbb{R}^{3}))$.
This generalizes a priori bounds for the classical
Navier-Stokes system by Duff in
\cite[Acta Math. 164, 1990]{[Duff]}  and Boutros and Gibbon's  spatial  derivative estimates in \cite[Nonlinearity 37, 2024]{[BG]}. In addition, we derive that  $  u   \in L^{\f{q}{q-3}}(0,T;L^{q} (\mathbb{R}^{3}))$ with $  6\leq q\leq\infty $  and $\Lambda^{k}u   \in L^{\f{q}{ q(k+1)-3}}(0,T;L^{q} (\mathbb{R}^{3})) $ with $k\geq1, 2\leq q\leq\infty $  in the standard  Navier-Stokes equations.
  \end{abstract}
\noindent {\bf MSC(2020):}\quad   35L65, 35L67, 35Q35
\\\noindent
{\bf Keywords:}   fractional Navier-Stokes equations; regularity; space-time derivative estimates 

\section{Introduction}
\label{intro}
\setcounter{section}{1}\setcounter{equation}{0}
This paper addresses a priori estimates of solutions to the following  fractional Navier-Stokes equations
\be\label{GNS}
\left\{\ba
&u_{t}+ (-\Delta)^{\alpha}u+u\cdot\nabla u+\nabla p=0,~~\te{div}\,u=0,\\
&u(0)=u(0,x).\ea\right.\ee
Here, $u$ represents the velocity of fluid and $p$ denotes the pressure.
The fractional Laplacian  operator $(-\Delta)^{\alpha}$ is the infinitesimal generator of a L\'{e}vy process and defined by $\widehat{\Lambda^{2\alpha} f}(\xi)=|\xi|^{2\alpha}\hat{f}(\xi)$
,   where $\Lambda=(-\Delta)^{1/2}$ and $\hat{f}(\xi)=\frac{1}{(2\pi)^{n}}\int_{\mathbb{R}^{n}}f (x)e^{-i\xi\cdot x}\,dx$.
 The  initial data $ u_{0} $ satisfies $\Div u_{0}=0$.
The Navier-Stokes system  \eqref{GNS} with
fractional dissipation was introduced by
Lions in \cite{[Lions]}, where it was shown  that weak solutions in natural energy spaces $L^{\infty}(0,T;L^{2})\cap L^{2}(0,T;\dot{H}^{\alpha})$ are regular for $\alpha\geq5/4$.

The partial regularity of solutions to the   fractional Navier-Stokes equations \eqref{GNS}  with $\alpha<5/4$ has been established in \cite{[RWW],[CDM],[TY],[O],[JW]} .
Just as the classical Navier-Stokes equations, the full regularity of weak solutions to the fractional Navier-Stokes equations is unknown for the case  $\alpha<5/4$.
Besides the basic energy estimate $u\in L^{\infty}(0,T;L^{2}(\mathbb{R}^{3}))\cap L^{2}(0,T;\dot{H}^{1}(\mathbb{R}^{3}))$, higher derivative estimates for the Navier-Stokes equations were exploited (see \cite{[FGT],[Tao],[Constantin],[Lions],[Vasseur],[CV],[VY],[Yang],[Duff],[MPR]}). In particular,
Foias, Guillope and Temam \cite{[FGT]} obtained new a priori
estimates of the Navier-Stokes equations and proved that   Leray-Hopf solutions satisfy   $\partial^{s}_{x}  {u} \in L^{\f{2}{ 2s-1}}  (0,T;L^{2}(\mathbb{R}^{3})).$
 The estimate  $  {u} \in L^{1}(0,T;L^{\infty}(\mathbb{R}^{3})) $   due to
Tartar      was stated in  \cite{[FGT]} (see also   \cite{[Tao]}).
In a seminal work,
 Constantin   constructed   suitable weak solutions satisfying $ \nabla^{2}   {u} \in   L^{q}(0,T;L^{q}(\mathbb{T}^{3}))  $ for $q<\f43$ in
\cite{[Constantin]}. It should be pointed out that
  Constantin's second spatial derivative estimates   surpass
 the classical Stokes's estimate
 $ \nabla^{2}   {u} \in   L^{\f54}(0,T;L^{\f54}(\mathbb{T}^{3}))$.
Subsequently, Constantin's second derivative estimate  on the bounded domain was improved by
Lions     in \cite{[Lions]}, where it was shown  that $ \nabla^{2}   {u} \in   L^{\f43,\infty}(0,T;L^{\f43,\infty}(\Omega))$. For the whole space case, by the blow-up method and the galilean invariance of the transport part of the equation, Vasseur obtained  $ \nabla^{2}  {u} \in   L^{q}(0,T;L^{q}_{loc}(\mathbb{R}^{3}))  $ for $q<\f43$ in \cite{[Vasseur]}.
In \cite{[CV]}, Choi and Vasseur
addressed   the borderline  case of \cite{[Vasseur]} and
obtained the
fractional  derivative estimates    $\nabla^{\beta} u \in L^{\f{4}{\beta+1},\infty}(0,T;L^{\f{4}{\beta+1},\infty}(\mathbb{R}^{3})),  ~\beta>1.$
Recently, using the vorticity equation and   De Giorgi iteration
 Vassuer and Yang \cite{[VY]}
 refined the  previous  second derivative estimate to   $ \nabla^{2}   {u} \in   L^{\f43,\ell}(0,T;L^{\f43,\ell}(\mathbb{R}^{3}))  $ for $\ell>\f43$. In addition, the reader is referred to the latest progress by Yang in \cite{[Yang]}. There,   it is   demonstrated that $ \nabla^{n}   {u} \in   L^{p,\infty}(0,T;L^{q,\infty}(\mathbb{T}^{3})), 1/p+3/q=n+1, n\geq1, 0<p\leq q\leq\infty.  $

For higher time derivative estimates, Duff proved that  $ \partial_{ t}^{r}\partial_{x}^{s}  {u}  \in L^{\f{2}{4r+2s-1}}(0,T;L^{2}(\mathbb{R}^{3}))  $, ~$\partial_{ t}^{r}\partial_{x}^{s}  {u}  \in L^{\f{1}{2r+ s+1}}(0,T;L^{\infty}(\mathbb{R}^{3})) $ in \cite{[Duff]}. New estimates for the temporal averages of functions
containing high order time derivatives of the weak solutions to the 3D Navier-Stokes equations were presented by Chae in \cite{[Chae]}.
Next, we turn our attention back to the     fractional Navier-Stokes equations \eqref{GNS}.
A priori estimates of solutions to the  fractional Navier-Stokes system is limited. To the   knowledge  of authors, there exist only two works.
  Local regularity of weak solutions to the fractional  Navier-Stokes equations was considered by Kwon  and O\.za\'nski in \cite{[KO]}, where it was proved that
       \be
\nabla^{n}u\in L^{p,\infty}(\mathbb{R}^{3}\times(0,\infty))
,p=\f{2(3\alpha-1)}{n+2\alpha-1}, \f34<\alpha<1,  n =1,2. \ee
Boutros and Gibbon \cite{[BG]} proved that  Leray-Hopf weak solutions of the fractional Navier-Stokes equations \eqref{GNS} satisfy
\be\label{bg}
\|\Lambda^{m\alpha}u\|_{L^{2}(\mathbb{T}^{3})}\in L^{\f{2(6\alpha-5)}{2m\alpha+4\alpha-5}}, \|\Lambda^{m }u\|_{L^{2}(\mathbb{T}^{3})}\in L^{\f{2(6\alpha-5)}{2m +4\alpha-5}},  5/6<\alpha< 5/4.
\ee
The objective of this paper is to generalize Boutros and Gibbon's derivative estimate \eqref{bg}. The following is our main result.
\begin{theorem}\label{the1.1}
Assume that $u$ is a smooth solution of    the   fractional Navier-Stokes equations \eqref{GNS}. Then for each $T>0$, $m,n\in\mathbb{N}$, there holds for  $5/6<\alpha< 5/4$,
\be\label{1.1}\ba
 \Lambda^{n\alpha}u^{(m)}_{t}  \in L^{\f{2(6\alpha-5)}{4m\alpha+2n\alpha+4 \alpha-5}}(0,T; L^{2}(\mathbb{R}^{3})),~  \Lambda^{n }u^{(m)}_{t}  \in L^{\f{2(6\alpha-5)}{4m\alpha+2n +4 \alpha-5}}(0,T; L^{2}(\mathbb{R}^{3})).  \ea\ee
\end{theorem}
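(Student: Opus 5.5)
Since the Boutros--Gibbon bound \eqref{bg} is stated on $\mathbb{T}^3$ and we need it on $\mathbb{R}^3$, I would not simply import it. Instead I would run a single Duff-type argument that reproves the spatial case $m=0$ and then climbs in the number of time derivatives, in two layers.

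\textbf{First layer ($m=0$).} Here I reprove the scaling-invariant form on $\mathbb{R}^3$. Set $F_n(t)=\|\Lambda^{n\alpha}u\|_{L^2}^2$. The energy identity gives
\begin{equation*}
\tfrac12\tfrac{d}{dt}F_n+F_{n+1}=-\int \Lambda^{n\alpha}(u\cdot\nabla u)\cdot\Lambda^{n\alpha}u\,dx .
\end{equation*}
I would bound the trilinear term by a Kato--Ponce commutator estimate (the transport part vanishes by $\mathrm{div}\,u=0$), followed by Sobolev embedding and Gagliardo--Nirenberg interpolation between $\|u\|_{L^2}$ and $F_{n+1}$. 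This should yield a differential inequality of the form
\begin{equation*}
\tfrac{d}{dt}F_n+F_{n+1}\le C F_n^{\,a_n},
\end{equation*}
with exponents fixed by scaling, where $a_n>1$ exactly when $\alpha<5/4$.

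Following Foias--Guillop\'e--Temam and Duff, I then divide by $F_n^{\,\beta}$ for a suitable $\beta>1$ and integrate in time. Combined with the interpolation $F_n\le F_0^{1-\theta}F_{n+1}^{\theta}$, the energy bound $\int_0^T F_1\,dt<\infty$, and an induction on $n$, this gives
\begin{equation*}
\int_0^T F_n^{\frac{6\alpha-5}{2n\alpha+4\alpha-5}}\,dt<\infty .
\end{equation*}
The restriction $\alpha>5/6$ is what keeps $6\alpha-5$ positive. The $\Lambda^{n}$ statement then follows from the $\Lambda^{n\alpha}$ one by interpolating between neighbouring multiples of $\alpha$ and applying H\"older in time. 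The exponent is affine in the order of differentiation, so the interpolation is consistent.

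\textbf{Second layer (time derivatives).} The key observation is that one time derivative costs $2\alpha$ spatial derivatives. Formally, $u_t=-\Lambda^{2\alpha}u-\mathbb{P}\,\mathrm{div}(u\otimes u)$, where $\mathbb{P}$ is the Leray projector, and differentiating $m-1$ more times gives
\begin{equation*}
u_t^{(m)}=-\Lambda^{2\alpha}u_t^{(m-1)}-\mathbb{P}\,\mathrm{div}\sum_{j=0}^{m-1}\binom{m-1}{j}\,u_t^{(j)}\otimes u_t^{(m-1-j)} .
\end{equation*}
By induction on $m$, I would show that $\|\Lambda^{s}u_t^{(m)}\|_{L^2}$ is bounded by
\begin{equation*}
\|\Lambda^{s+2m\alpha}u\|_{L^2}\quad\text{plus products}\quad \prod_i\|\Lambda^{s_i}u\|_{L^2}\ \text{with total weighted order } s+2m\alpha .
\end{equation*}
The linear term has exponent $\frac{2(6\alpha-5)}{4m\alpha+2n\alpha+4\alpha-5}$ by the first layer with $n$ replaced by $n+2m$, which matches \eqref{1.1}.

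\textbf{Main obstacle.} The hard step is the nonlinear products. For each product I would estimate $\|\Lambda^{s}(fg)\|_{L^2}\lesssim \|\Lambda^{s}f\|_{L^{p}}\|g\|_{L^{q}}+\cdots$, convert everything to $L^2$-based homogeneous norms by Sobolev embedding, and use H\"older in time with the reciprocal exponents from the first layer.

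Reciprocal exponents add for products, so I must check that
\begin{equation*}
\sum_i\frac{2s_i+4\alpha-5}{2(6\alpha-5)}\le\frac{2s+4m\alpha+4\alpha-5}{2(6\alpha-5)}
\end{equation*}
after the embedding losses. Scaling suggests equality up to the extra $-5$ terms, and each extra factor gains $(4\alpha-5)$-type slack only if $\alpha<5/4$. Where this bookkeeping fails, I would instead interpolate the lower-order factor with the $L^\infty_tL^2$ energy bound. This gives factors that are bounded in time, and they can be absorbed on the finite interval $(0,T)$, since lower exponents are then allowed.
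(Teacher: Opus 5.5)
Your first layer is essentially the paper's Case~1: the inequality \eqref{4.15v1}, the functional $F_{0,q}$ and Lemma~\ref{hofflemma}. Your second layer is a genuinely different route. The paper never substitutes the equation for $\partial_t$. It differentiates \eqref{GNS} in time, derives energy inequalities for $\|u^{(r)}_t\|_{L^2}$, $\|\Lambda^{\alpha}u^{(r)}_t\|_{L^2}$ and $\|\Lambda^{s\alpha}u^{(r)}_t\|_{L^2}$, and runs a two-index induction on $p+q$ with ladder functionals $F_{p,q}$, $G_{p,q}$, invoking Lemma~\ref{hofflemma} at every step; Theorem~\ref{the1.2} is obtained first this way. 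You instead trade each $\partial_t$ for $\Lambda^{2\alpha}$ or a $\mathbb{P}\,\mathrm{div}$-product. The ODE argument is then needed only for the one-parameter spatial ladder, and everything else is a static multilinear estimate followed by H\"older in time. This gives Theorem~\ref{the1.2} as the case $s=0$ and handles $\Lambda^{n}u^{(m)}_t$, $m\ge1$, directly, with no final interpolation. The price is multilinear Kato--Ponce estimates through Riesz transforms and a careful choice of Lebesgue exponents. The paper needs only one commutator and the bilinear bound $\|A\|_{L^{6/(3-2\alpha)}}\|\nabla B\|_{L^{3/\alpha}}\|D\|_{L^2}$, but pays for it with heavy bookkeeping.

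Your treatment of the main obstacle needs correcting, although the conclusion survives. In a $k$-fold product the orders do not add up to $s+2m\alpha$. They add up to $s+2m\alpha+(k-1)(\frac52-2\alpha)$: each bilinear step replaces a time derivative (weight $2\alpha$) by one spatial derivative, and the Sobolev embeddings add $\frac32$ per extra factor. Since $\frac{2\sigma+4\alpha-5}{2(6\alpha-5)}$ is affine in $\sigma$, the reciprocal time exponents then sum to exactly $\frac{2s+4m\alpha+4\alpha-5}{2(6\alpha-5)}$, as scaling predicts. There is no $(4\alpha-5)$-slack and no loss.

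The real constraint is that every factor must end at order $\sigma_i\ge\alpha$, because the affine law is unavailable below $\alpha$. This can always be arranged by choosing the H\"older exponents, using $\alpha<\frac54$; for example, $\|u\cdot\nabla u\|_{L^2}\lesssim\|\Lambda^{\alpha}u\|_{L^2}\|\Lambda^{5/2-\alpha}u\|_{L^2}$ with $\frac52-\alpha>\alpha$. Your fallback would break the count rather than save it. Interpolating a factor of order $\sigma<\alpha$ with $L^\infty_tL^2$ gives it reciprocal exponent $\frac{\sigma}{2\alpha}$, which exceeds $\frac{2\sigma+4\alpha-5}{2(6\alpha-5)}$ when $\alpha<\frac54$. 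On $(0,T)$ you may lower a time exponent but never raise it.

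Two smaller points. First, in the first layer the interpolation must be anchored at $\|\Lambda^\alpha u\|_{L^2}$, as in \eqref{keyinequality}, not at $\|u\|_{L^2}$. Because $\|u\|_{L^2}$ has negative scaling weight $2\alpha-\frac52$, any bound by $\|u\|_{L^2}^{c}F_n^{b}$ with $c>0$ forces $b>a_n=\frac{2n\alpha+6\alpha-5}{2n\alpha+4\alpha-5}$, and Lemma~\ref{hofflemma} then returns a worse exponent. With the correct anchor you get $\frac{d}{dt}F_n+F_{n+1}\le CF_n^{a_n}+CF_1^{(2n\alpha+6\alpha-5)/(6\alpha-5)}$. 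The last term is absorbed by applying the lemma to $F_n+cF_1^{(2n\alpha+4\alpha-5)/(6\alpha-5)}$, which lies in $L^{(6\alpha-5)/(2n\alpha+4\alpha-5)}(0,T)$ because $F_1\in L^1$; this is exactly the paper's $F_{0,q}$.

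Second, for $m=0$ the $\Lambda^{n}$ bound does not follow from interpolation when $n<\alpha$, that is, when $n=1$ and $1<\alpha<\frac54$. The lower neighbouring order is then $0$, where only $L^\infty_tL^2$ is available. That gives $\Lambda u\in L^{2\alpha}_tL^2$, strictly weaker than the claimed $L^{2(6\alpha-5)/(4\alpha-3)}_tL^2$. The paper's final interpolation through $\|u^{(m)}_t\|_{L^2}$ has the same defect at $m=0$, since the exponent in Theorem~\ref{the1.2} is negative for $k=0$. For $m\ge1$ both arguments are fine.
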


To prove the aforementioned theorem, we require
\begin{theorem}\label{the1.2}Assume that $u$ is a smooth solution of    the   fractional Navier-Stokes equations \eqref{GNS}. Then for  $5/6<\alpha< 5/4$, it is valid that
 \be\label{timederivative}  u _{t}^{(k )} \in L^{\f{2(6\alpha-5)}{4k\alpha+4\alpha  -5}}(0,T;L^{2} (\mathbb{R}^{3})),\ee
where $k$ is  every nonnegative integer.
\end{theorem}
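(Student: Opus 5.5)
The plan is to prove a stronger statement that mixes space and time derivatives, by induction on $k$. We count one time derivative as $2\alpha$ spatial derivatives, which is the scaling of \eqref{GNS}. For integers $j\ge 0$ and real $s\ge 0$ set
$$\frac{1}{p(j,s)}=\frac{4j\alpha+2s\alpha+4\alpha-5}{2(6\alpha-5)} ,$$
and prove
$$\Lambda^{s\alpha}u^{(j)}_t\in L^{p(j,s)}(0,T;L^2(\mathbb{R}^3))$$
whenever $p(j,s)>0$. When $4j\alpha+2s\alpha+4\alpha-5\le 0$, which only happens for $j=0$ and small $s$, we read the statement as the energy bound in $L^\infty(0,T;L^2)$. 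Theorem \ref{the1.2} is the case $s=0$.

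\textbf{Base case $j=0$.} This is the Boutros--Gibbon estimate \eqref{bg}, transferred from $\mathbb{T}^3$ to $\mathbb{R}^3$. The argument in \cite{[BG]} is a ladder of energy inequalities for $\|\Lambda^{m\alpha}u\|_{L^2}$ combined with Gagliardo--Nirenberg and the energy bounds, and it uses nothing specific to the torus. To pass from integer $m$ to real $s$, I interpolate between consecutive integers: $\|\Lambda^{s\alpha}u\|_{L^2}\le\|\Lambda^{m\alpha}u\|^{1-\theta}\|\Lambda^{(m+1)\alpha}u\|^{\theta}$. Hölder in time then gives exactly the exponent $p(0,s)$, because $1/p(0,s)$ is affine in $s$.

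\textbf{Inductive step.} Assume the claim holds for all $j\le k-1$ and all $s$. Apply $\Lambda^{s\alpha}\partial_t^{k-1}$ to the equation $u_t=-\Lambda^{2\alpha}u-\mathbb{P}\,\mathrm{div}(u\otimes u)$ to get
$$\Lambda^{s\alpha}u^{(k)}_t=-\Lambda^{(s+2)\alpha}u^{(k-1)}_t-\sum_{j+l=k-1}\binom{k-1}{j}\Lambda^{s\alpha}\mathbb{P}\,\mathrm{div}\big(u^{(j)}_t\otimes u^{(l)}_t\big).$$
\emph{Linear term.} This lies in $L^{p(k-1,s+2)}=L^{p(k,s)}$ directly by induction.

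\emph{Nonlinear terms.} $\mathbb{P}$ is bounded on $L^2$. Using the fractional Leibniz (Kato--Ponce) rule together with the Sobolev embedding $\|fg\|_{L^2}\le\|f\|_{L^3}\|g\|_{L^6}$ and its variants, I bound each term by a finite sum of products
$$\|\Lambda^{a}u^{(j)}_t\|_{L^2}\,\|\Lambda^{b}u^{(l)}_t\|_{L^2},\qquad a+b=s\alpha+1+\tfrac32 .$$
For instance $\|\Lambda^{\sigma}(fg)\|_{L^2}\lesssim\|\Lambda^{\sigma}f\|_{L^3}\|g\|_{L^6}+\|f\|_{L^6}\|\Lambda^{\sigma}g\|_{L^3}$, after which $\dot H^{1/2}\subset L^3$ and $\dot H^1\subset L^6$ are applied. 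Hölder in time closes the step because of the exact additivity
$$\frac1{p(j,a/\alpha)}+\frac1{p(l,b/\alpha)}=\frac{4(k-1)\alpha+2(a+b)+8\alpha-10}{2(6\alpha-5)}=\frac{4k\alpha+2s\alpha+4\alpha-5}{2(6\alpha-5)}=\frac1{p(k,s)}.$$
This is the scaling consistency of the problem. Hölder remains valid for time exponents below $1$ as a quasi-norm inequality.

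\textbf{Main obstacle.} The delicate point is the choice of the split $(a,b)$ when one factor has $j=0$ and only few derivatives. There $p(0,a/\alpha)$ can be negative or infinite, that is, only the $L^\infty L^2$ energy bound is available, and the additivity above breaks down. My first attempt is to distribute derivatives evenly, or to put all but $\tfrac32$ of the derivatives on the factor with more time derivatives. If that still produces a low-regularity factor, I interpolate that factor between $L^\infty_tL^2$ and a high norm $\Lambda^{N\alpha}u^{(j)}_t$ so that both time exponents are positive. This should be possible precisely because $\alpha>5/6$, which makes $6\alpha-5>0$ and keeps every resulting exponent admissible.

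I also need to check that the interpolated exponent still sums to $1/p(k,s)$. It does: the energy space corresponds to the formal value $1/p=0$, and an interpolation that is affine in the derivative count preserves the additivity.
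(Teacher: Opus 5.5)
\textbf{Verdict: gap, but fixable.} Your route is genuinely different from the paper's: you use the equation to trade each $\partial_t$ for $\Lambda^{2\alpha}$ plus a quadratic term, and close by H\"older in time with scaling-additive exponents. As written, however, it does not close for $1<\alpha<5/4$, and the failure is exactly at the point you flagged.

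\textbf{Where it breaks.} For $j=0$ the available bounds are not affine in $s$ on $[0,1]$. The energy bound sits at $1/p=0$ for $s=0$, while the affine formula gives $1/p(0,0)=\frac{4\alpha-5}{2(6\alpha-5)}<0$. So interpolating between $m=0$ and $m=1$ only gives $\Lambda^{s\alpha}u\in L^{2/s}L^2$, and $\frac{s}{2}>\frac{1}{p(0,s)}$ for $0<s<1$. Your base case for $s\in(\frac{5-4\alpha}{2\alpha},1)$ is therefore unproved. As $s\downarrow\frac{5-4\alpha}{2\alpha}$ it would become an $L^\infty_t$ bound in the critical space $\dot H^{5/2-2\alpha}$.

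Your fallback fails for the same reason. Interpolating a factor against $L^\infty_tL^2$ follows a chord in the $(s,1/p)$-plane starting at $(0,0)$, which lies strictly above the line $s\mapsto 1/p(0,s)$. The reciprocal time exponents then sum to strictly more than $1/p(k,s)$, so the induction loses integrability; your claim that additivity is preserved is false.

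Concretely, your $L^3\times L^6$ splitting puts exactly one derivative on one factor. When that factor is $u$, its index is $1/\alpha$, which is below $1$ once $\alpha>1$. Your other suggestion, leaving $3/2$ derivatives on the low factor, is the forbidden endpoint $\dot H^{3/2}\not\subset L^\infty$.

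\textbf{The missing idea.} Choose the H\"older pair so that every factor carries at least $\alpha$ derivatives; this is possible because $\alpha<5/4<3/2$. With $\sigma=s\alpha+1$, use
$\|\Lambda^{\sigma}(fg)\|_{L^2}\lesssim\|\Lambda^{\sigma}f\|_{L^{3/\alpha}}\|g\|_{L^{6/(3-2\alpha)}}+\|f\|_{L^{6/(3-2\alpha)}}\|\Lambda^{\sigma}g\|_{L^{3/\alpha}}$,
together with $\dot H^{\alpha}\subset L^{6/(3-2\alpha)}$ and $\dot H^{3/2-\alpha}\subset L^{3/\alpha}$. This is the same triple the paper uses throughout.

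The indices then become $1$ and $s+\frac{5}{2\alpha}-1>1$. Both lie where the $j=0$ bound is legitimate: the energy bound at index $1$, and the Boutros--Gibbon line, interpolated between integers $m\ge 1$, above that. Additivity also holds exactly: $\frac{1}{p(j,1)}+\frac{1}{p(l,\,s+\frac{5}{2\alpha}-1)}=\frac{1}{p(k,s)}$.

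\textbf{Comparison once fixed.} With the base case restricted to $s\in\{0\}\cup[1,\infty)$ and this splitting, your argument works. It even yields Theorem \ref{the1.1}, for real $s$, at the same time. The price is that it needs the full spatial ladder of \cite{[BG]} on $\mathbb{R}^3$, up to order $2k\alpha$, as input. The paper proves this theorem from the energy bound alone. It builds the functionals $F_{r,0},F_{r,1}$ from energy identities for $u^{(r)}_t$, tested against $u^{(r)}_t$, $\Lambda^{2\alpha}u^{(r)}_t$ and $u^{(r+1)}_t$, and then applies Lemma \ref{hofflemma}.
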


We make some analysis   about above results.
Unlike  the spatial variables,
 commutator   estimates  of time variable is unknown, which yields that even the time derivative estimates  of solutions to the Navier-Stokes equations are quite involved.
This  scenario also occurs in the derivative estimates in terms of time variable for
the MHD equations (see Zheligovsky's statement in \cite{[Zheligovsky]}).
 Theorem  \ref{the1.1} and \ref{the1.2}  complement  Boutros and Gibbon's derivative estimates  \eqref{bg} of the fractional Navier-Stokes
equations. To prove these two theorems, we note that
there are two main  steps in the deduction of Duff's space-time derivative estimates of solutions to the Navier-Stokes equations in \cite{[Duff]}. There, the first step is derivation of time derivative estimates for $  u _{t}^{(k )}$ and the second one is the space-time derivative estimates for $
 \Lambda^{n }u^{(m)}_{t}$ with any   positive integer $n, m$. It seems that Theorem \ref{the1.1} is the first generalization  of  Duff's space-time derivative estimates.
Compared with the Duff's deduction in \cite{[Duff]}, the new ingredients of our proof are four-folds. Firstly,  the notations in the above two steps  are  unified and concise.  Secondly, we observe that almost all the nonlinear  estimates are the application of estimate
$$\B|\int_{\mathbb{R}^{3}}
A\cdot B\cdot Ddx \B|\leq \|A\|_{L^{\f{6}{3-2\alpha}}(\mathbb{R}^{3})}\|\nabla B\|
_{L^{\f{3}{\alpha}}(\mathbb{R}^{3})}
\|D\|_{L^{2}(\mathbb{R}^{3})}$$
 and the interpolation
$$\|\nabla B \|_{L^{\f{3}{\alpha}}(\mathbb{R}^{3})}\leq C\|\Lambda^{\alpha}B \|^{\f{6\alpha -5 }{2\alpha }}_{L^{2}(\mathbb{R}^{3})}\|\Lambda^{2\alpha} B \|_{L^{2}(\mathbb{R}^{3})}^{\f{5-4\alpha}{2\alpha }}.$$
 Thirdly, we divide the deduction of the space-time derivative estimates into four cases in turn to make the paper more readable. Fourthly, the proof presented here can be applied to the other fluid equations. For example, we will establish the space-time derivative estimates
to the MHD equations in a forthcoming paper.

To proceed further, notice that the results in  Theorem \ref{the1.1} and Theorem \ref{the1.2} are all in the context of Hilbert spaces $H^{s}$. A natural question is whether the similar estimates hold for the Sobolev space $W^{s,p}$ with $p\neq2$. In this direction, M\'alek,    Padula  and   R\r{ u}\v{z}i\v{c}ka proved that a weak solution of the Navier-Stokes equations satisfies  $\nabla  {u} \in L^{\f{q}{2q-3}}(0,T;L^{q}(\mathbb{T}^{3}))$ with $2 \leq q<\infty $, via nonlinear $p$-Laplace operator in
\cite{[MPR]}. Our next result is to extend M\'alek,    Padula  and   R\r{ u}\v{z}i\v{c}ka's derivative estimates in \cite{[MPR]}.
\begin{coro}\label{coro}
Assume that $u$ is a smooth solution to  the    standard Navier-Stokes equations \eqref{GNS} with $\alpha=1$. Then there holds
\begin{itemize}	\item[(1)] $  u   \in L^{\f{q}{q-3}}(0,T;L^{q} (\mathbb{R}^{3})),~6\leq q\leq\infty;$
 \item[(2)]  $\Lambda^{k}u   \in L^{\f{q}{ q(k+1)-3}}(0,T;L^{q} (\mathbb{R}^{3})),~k\geq1,~2\leq q\leq\infty.$
\end{itemize}
\end{coro}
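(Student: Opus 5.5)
We plan to derive both statements from Theorem \ref{the1.1} with $\alpha=1$, $m=0$, combined with Gagliardo--Nirenberg interpolation in space and H\"older's inequality in time. With $\alpha=1$ and $m=0$, Theorem \ref{the1.1} gives, for every integer $n\geq1$,
$$\Lambda^{n}u\in L^{\f{2}{2n-1}}(0,T;L^{2}(\mathbb{R}^{3})),$$
since $\f{2(6-5)}{2n+4-5}=\f{2}{2n-1}$. The key observation is that the reciprocal time exponent $\f{2n-1}{2}$ is affine in $n$. Hence interpolation preserves the form of the bound.

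\textbf{Step 1: the spatial interpolation.} Fix a target norm $\|\Lambda^{k}u\|_{L^{q}}$, where $k=0$ covers part (1). Scaling dictates the Sobolev index
$$s=k+\f32-\f3q .$$
We choose integers $n_1\le s\le n_2$ with $n_2-n_1\le 1$ and $n_1\geq1$. For $k=0$ this needs $s\geq1$, i.e. $q\geq6$, which is exactly why (1) is restricted to $q\geq6$. We then apply
$$\|\Lambda^{k}u\|_{L^{q}}\le C\|\Lambda^{n_1}u\|_{L^{2}}^{\theta}\|\Lambda^{n_2}u\|_{L^{2}}^{1-\theta},\qquad s=\theta n_1+(1-\theta)n_2 .$$
When $q<\infty$, this is the Sobolev embedding $\dot H^{s}\subset L^{q}$ composed with interpolation of homogeneous Sobolev norms; if $s$ is an integer, we take $\theta=1$. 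When $q=\infty$, we have $s=k+\f32$, a half-integer. We then take $n_1=s-\f12$, $n_2=s+\f12$, $\theta=\f12$, which is the Agmon-type inequality. It holds because the interpolation is strict, e.g. by splitting $\widehat{\Lambda^{k}u}$ into low and high frequencies and optimizing the cut-off.

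\textbf{Step 2: the time integrability.} By H\"older's inequality in time (valid also for exponents below $1$, as an inequality for quasi-norms),
$$\Lambda^{k}u\in L^{p}(0,T;L^{q}),\qquad \f1p=\theta\,\f{2n_1-1}{2}+(1-\theta)\,\f{2n_2-1}{2}=\f{2s-1}{2}=k+1-\f3q .$$
This gives $p=\f{q}{q(k+1)-3}$, which is exactly the exponent in (2). For $k=0$ it gives $p=\f{q}{q-3}$, the exponent in (1).

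\textbf{Checks and endpoints.} As sanity checks, $q=\infty$, $k=0$ recovers Tartar's $u\in L^{1}(0,T;L^{\infty})$, and $q=2$ in (2) is Theorem \ref{the1.1} itself. For $k\geq1$ and $2\le q\le\infty$ we have $s\in[k,k+\f32]$, so $n_1\geq1$ is automatic.

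The only real point of care is the $L^{\infty}$ endpoint, where Sobolev embedding fails and a genuinely two-sided interpolation is needed. There I would give the short Fourier-splitting proof explicitly rather than rely on the general Gagliardo--Nirenberg theorem.
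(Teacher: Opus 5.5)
Your proof is correct, but it organizes the interpolation differently from the paper.

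The paper first establishes the $L^\infty$ endpoint $\int_0^T\|\Lambda^k u\|_{L^\infty}^{1/(k+1)}\,dt<\infty$. It gets this from a single Gagliardo--Nirenberg inequality, $\|\Lambda^k u\|_{L^\infty}\le C\|u\|_{L^6}^{\frac{1}{2(k+1)}}\|\Lambda^{k+2}u\|_{L^2}^{\frac{2k+1}{2(k+1)}}$, combined with $\|u\|_{L^6}\le C\|\nabla u\|_{L^2}$ and $\Lambda^{k+2}u\in L^{\frac{2}{2k+3}}(0,T;L^2)$. So it interpolates between the non-adjacent levels $\dot H^1$ and $\dot H^{k+2}$. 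It then reaches every intermediate $q$ by plain Lebesgue interpolation at a fixed derivative order, followed by H\"older in time:
\begin{itemize}
\item[(1)] $\|u\|_{L^q}\le\|u\|_{L^6}^{6/q}\|u\|_{L^\infty}^{1-6/q}$;
\item[(2)] $\|\Lambda^k u\|_{L^q}\le\|\Lambda^k u\|_{L^2}^{2/q}\|\Lambda^k u\|_{L^\infty}^{1-2/q}$.
\end{itemize}

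You instead interpolate in the Sobolev index at each fixed $q$. You use $\dot H^{s}\subset L^q$ and log-convexity of $s\mapsto\|\Lambda^s u\|_{L^2}$ between adjacent integer levels. At $q=\infty$ you use an Agmon-type estimate between $\dot H^{k+1}$ and $\dot H^{k+2}$.

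\textbf{What each route buys.}
\begin{itemize}
\item The paper's route needs only one nontrivial spatial inequality. It matches the picture in Figures \ref{figure2}--\ref{figure3}: segments joining the $q=2$ (or $q=6$) endpoint to the $q=\infty$ endpoint.
\item Your route relies on fractional Sobolev embeddings. In exchange, it shows transparently why all the exponents fall on a single line: the reciprocal time exponent $n-\frac12$ supplied by Theorem \ref{the1.1} is affine in $n$.
\item Your route also explains the restriction $q\ge 6$ in (1). The level $n=0$, i.e.\ $L^\infty L^2$, does not lie on that line, so $\dot H^1$ is the lowest usable level.
\item Your route adapts easily to non-integer $k$, which the paper's direct use of Theorem \ref{the1.1} at $n=k$ does not cover.
\end{itemize}
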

The two endpoint cases $q=6$ and $q=\infty$ in the first result correspond to the regularity of Leray-Hopf weak solutions and Tartar's contribution, respectively.
The second derivative estimates  covers the
 whole space case of  a priori bounds
 due to M\'alek,    Padula  and   R\r{ u}\v{z}i\v{c}ka in \cite{[MPR]}.
 \begin{figure}[htbp]
  \centering

  \begin{minipage}{0.48\textwidth}
    \centering
\begin{tikzpicture}[scale=5,>= stealth]
\draw[->] (0,0)--(1.1,0)node[right]{\tiny{$\f1q$}};
\draw[->](0,0)--(0,1.1)node[right]{\tiny{$\f1p$}};

\draw (0.5,0)--(0.166666,0.5)node[pos=0.5,above,sloped]{\tiny weak solutions};

\draw[fill=black] (0 ,1) circle(.002);

\node[left] at ( -0.001, -0.012) {\tiny{0}};

\node[below] at (0.5,0.01) {\tiny $( \infty,2)$};
\node[right] at
(0.166666,0.5){\tiny $( 2, 6)$};
\node[right] at
(0 ,1){\tiny $(1, \infty)$};
\end{tikzpicture}
    \caption{Regularity of weak solutions and Tartar's contribution in \cite{[FGT]}}
    \label{figure1}  \end{minipage}
  \hfill
  \begin{minipage}{0.48\textwidth}
    \centering
\begin{tikzpicture}[scale=5,>= stealth]
\draw[->] (0,0)--(1.1,0)node[right]{\tiny{$\f1q$}};
\draw[->](0,0)--(0,1.1)node[right]{\tiny{$\f1p$}};

\draw (0.5,0)--(0.166666,0.5);

\draw (0 ,1)--(0.166666,0.5);

\node[left] at ( -0.001, -0.012) {\tiny{0}};

\node[below] at (0.5,0.01) {\tiny $( \infty,2)$};
\node[right] at
(0.166666,0.5){\tiny $( 2, 6)$};
\node[right] at
(0 ,1){\tiny $(1, \infty)$};
\end{tikzpicture}
    \caption{Regularity of the velocity of the standard Navier-Stokes equations in Corollary \ref{coro}}
    \label{figure2}\end{minipage}
\end{figure}
 \begin{figure}[htbp]
  \centering
  \begin{tikzpicture}[scale=1,>= stealth]
\draw[->] (0,0)--(5,0)node[right]{\tiny{$\f1q$}};
\draw[->](0,0)--(0,5.2)node[right]{\tiny{$\f1p$}};

\draw (0.5,0)--(0.166666,0.5);
\draw (0.5,0.5)--(0 , 2);
\draw (0.5,1.5)--(0 , 3);
\draw (0.5,2.5)--(0 , 4);
\draw (0 ,1)--(0.166666,0.5);
\draw [style=dashed](0 ,5)--(0.5,3.5);
\node[right] at (0.5,2.5) {\tiny $(5/2,2)$};
\node[right] at (0.5,1.5) {\tiny $( 3/2,2)$};

\node[right] at (0.5,0.5){\tiny $( 2,2)$};
\node[left] at ( -0.001, -0.012) {\tiny{0}};

\node[below] at (0.5,0.01) {\tiny $( \infty,2)$};
\node[left] at
(0 ,1){\tiny $(1, \infty)$};
\node[left] at
(0 ,2){\tiny $(1/2, \infty)$};
\node[left] at
(0 ,3){\tiny $(1/3, \infty)$};
\node[left] at
(0 ,4){\tiny $(1/4, \infty)$};
\node[above] at (0.37,0.24) {\tiny$u$};
\node[above] at (0.57,0.72) {\tiny$\nabla u$};
\node[above] at (0.57,1.92) {\tiny$\nabla^{2} u$};
\node[above] at (0.53 ,3.0) {\tiny$D^{3} u$};
\end{tikzpicture}
    \caption{Higher derivative estimates of the standard Navier-Stokes equations in Corollary \ref{coro}}
    \label{figure3}

\end{figure}

To illustrate  Corollary \ref{coro}, let  the horizontal and vertical axes be $\f1q$ and $\f1p$  on the coordinate plane,   respectively.
Regularity of Leray-Hopf weak solutions   of the tri-dimensional Navier-Stokes equations and Tartar's regularity  can be found in   Figure \ref{figure1}.
Regularity
$  u   \in L^{\f{q}{q-3}}(0,T;L^{q} (\mathbb{R}^{3})) $ with $6\leq q\leq\infty $  correspond to the line in Figure \ref{figure2}.
Higher derivative estimates  $\Lambda^{k}u   \in L^{\f{q}{ q(k+1)-3}}(0,T;L^{q} (\mathbb{R}^{3}))$ for $k\geq1,~2\leq q\leq\infty $  are
the lines in Figure \ref{figure3}. After we completed the three figures, we noticed that  a
figure similar as \ref{figure3}  appeared in work \cite{[Duff]}.

It is worth pointing out that the difference between $\Lambda$ and  $D$ is a Riesz operator. Therefore, the classical Calder\'on-Zygmund Theorem allows us to replace the operator $\Lambda$  by  $D$ in Theorem \ref{the1.1}  and Corollary \ref{coro}.

The present paper is built up as follows. Section 2 is devoted to some notations and
auxiliary lemmas. In Section 3, we study time derivative  estimates of solutions to the fractional Navier-Stokes equations. Section 4 is concerned with space-time
derivative estimates of solutions to the fractional
Navier-Stokes equations.
\section{Preliminaries}
\label{pre}
\setcounter{section}{2}\setcounter{equation}{0}
The classical Sobolev norm $\|\cdot\|_{H^{s}}$  is defined as   $\|f\|^{2} _{{H}^{s}}= \int_{\mathbb{R}^{n}} (1+|\xi|)^{2s}|\hat{f}(\xi)|^{2}d\xi$, $s\in \mathbb{R}$.
  We denote by  $ \dot{H}^{s}$ homogenous Sobolev spaces with the norm $\|f\|^{2} _{\dot{H}^{s}}= \int_{\mathbb{R}^{n}} |\xi|^{2s}|\hat{f}(\xi)|^{2}d\xi$.
 For $q\in [1,\,\infty]$, the notation $L^{q}(0,\,T;\,X)$ stands for the set of measurable functions on the interval $(0,\,T)$ with values in $X$ and $\|f(t,\cdot)\|_{X}$ belonging to $L^{q}(0,\,T)$.
For the sake of simplicity, we write
 $$  \|f\|_{L^p(\mathbb{R}^{3})}=\|f\|_{L^p}.$$
\begin{lemma}(\cite{[Duff]})\label{hofflemma}
Let $F(t)\geq0$  and $F(t)\in L^{\f{1}{p}}(0,T)$. Suppose that  $G(t)\geq0$  and
\be\label{2.1}
\f{d}{dt}F(t)+G(t)\leq CF(t)^{\f{q}{p}}.
\ee
Then, for $q>p+1$, there holds $G(t)\in L^{\f{1}{q}}(0,T)$.

\end{lemma}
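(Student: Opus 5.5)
The statement to prove is Lemma \ref{hofflemma}: from $F\ge 0$, $F\in L^{1/p}(0,T)$ and $\f{d}{dt}F+G\le CF^{q/p}$ we must get $G\in L^{1/q}(0,T)$ when $q>p+1$.

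The natural device is to divide the inequality by a power of $(1+F)$ chosen so that the left side becomes an exact derivative plus a weighted $G$. Then integrate in time, and recover $G$ itself by H\"older with exponents below one.

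\textbf{Step 1 (weighted inequality).} Replace $F$ by $\tilde F=1+F$; since $F^{q/p}\le \tilde F^{q/p}$, we still have $\tilde F'+G\le C\tilde F^{q/p}$. Multiply by $\tilde F^{-\gamma}$ for some $\gamma>1$ to be fixed:
\[
-\f{1}{\gamma-1}\f{d}{dt}\tilde F^{1-\gamma}+G\tilde F^{-\gamma}\le C\tilde F^{q/p-\gamma}.
\]
Integrating over $(0,T)$, the boundary term is harmless because $0<\tilde F^{1-\gamma}\le 1$. This gives
\[
\int_0^T G\tilde F^{-\gamma}\,dt\le \f{1}{\gamma-1}+C\int_0^T\tilde F^{q/p-\gamma}\,dt .
\]
The last integral is finite provided $q/p-\gamma\le 1/p$, i.e. $\gamma\ge (q-1)/p$, since $\tilde F\in L^{1/p}$ on the bounded interval. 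Choose $\gamma=(q-1)/p$; this satisfies $\gamma>1$ exactly because $q>p+1$. This is where the hypothesis is used.

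\textbf{Step 2 (remove the weight).} Write $G^{1/q}=(G\tilde F^{-\gamma})^{1/q}\,\tilde F^{\gamma/q}$ and apply H\"older with exponents $q$ and $q/(q-1)$:
\[
\int_0^T G^{1/q}\,dt\le\Big(\int_0^T G\tilde F^{-\gamma}\,dt\Big)^{1/q}\Big(\int_0^T\tilde F^{\gamma/(q-1)}\,dt\Big)^{(q-1)/q}.
\]
This requires $q>1$, which holds since $q>p+1>1$. With $\gamma=(q-1)/p$ we get $\gamma/(q-1)=1/p$, so the second factor is finite as well. Hence $G\in L^{1/q}(0,T)$.

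\textbf{Main obstacle.} The only delicate point is choosing $\gamma$ so that both integrability requirements match exactly. In the right-hand side of Step 1 we need $q/p-\gamma\le 1/p$, and in Step 2 we need $\gamma/(q-1)\le 1/p$; the common value $\gamma=(q-1)/p$ satisfies both with equality. Simultaneously we need $\gamma>1$ so that the derivative term has the favorable sign after integration, which is exactly the condition $q>p+1$.

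Working with $1+F$ instead of $F$ avoids any division by zero. If $F$ is only absolutely continuous, the chain rule for $\tilde F^{1-\gamma}$ still holds because $\tilde F\ge 1$.
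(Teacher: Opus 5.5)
Your proof is correct and follows the paper's argument: you use the same weight $(1+F)^{-(q-1)/p}$ and the same H\"older split with exponents $q$ and $q/(q-1)$. You also make explicit the step the paper calls a ``simple calculation''. Namely, the boundary term is bounded by $\frac{1}{\gamma-1}$ independently of $F(0)$ and $F(T)$, and this works precisely because $\gamma=(q-1)/p>1$.
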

\begin{proof}
Rearranging the inequality \eqref{2.1}, we have
\be
\f{\f{d}{dt}F(t)}{(F(t)+1)^{\f{q-1}{p }}}+\f{G(t)}{(F(t)+1)^{\f{q-1}{p }}}\leq C[F(t)+1]^{\f{1}{p}}.
\ee
A simple  calculation  leads to $$\int_{0}^{T}\f{G (t)}{(1+F(t))^{\f{q-1}{p }}}dt<\infty.$$
We deduce from the H\"older inequality that
$$\ba
\int_{0}^{T}G^{\f{1}{q}}(t)dt=&\int_{0}^{T}(1+F(t))^{\f{q-1}{pq}}
\f{G^{\f{1}{q}}(t)}{(1+F(t))^{\f{q-1}{pq}}}dt\\
\leq& \B[\int_{0}^{T}(1+F(t))^{\f1p}dt \B]^{\f{q-1}{ q}}\B[\int_{0}^{T}\f{G (t)}{(1+F(t))^{\f{q-1}{p }}}dt\B]^{\f1q},
\ea$$
where the H\"older inequality was used.
\end{proof}
\begin{lemma}Let  $\f{5}{2 (s+1)}<\alpha<\f54$ and $f\in \dot{H}^{\alpha}\cap\dot{H}^{s\alpha}$. Then it is valid that
\be\label{keyinequality}\|\nabla f \|_{L^{\f{3}{\alpha}}}\leq C\|\Lambda^{\alpha}f \|^{\f{2(s+1)\alpha -5 }{2\alpha(s-1)}}_{L^{2}}\|\Lambda^{s\alpha} f \|_{L^{2}}^{\f{5-4\alpha}{2\alpha(s-1)}}.\ee
\end{lemma}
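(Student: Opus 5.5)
The plan is to combine the Sobolev embedding for $\nabla f$ with the standard interpolation between homogeneous Sobolev norms. Since $\nabla$ and $\Lambda$ differ by a Riesz transform, which is bounded on $L^{3/\alpha}$ (here $1<3/\alpha<\infty$), we have $\|\nabla f\|_{L^{3/\alpha}}\leq C\|\Lambda f\|_{L^{3/\alpha}}$.

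\emph{Embedding step.} The homogeneous Sobolev embedding $\dot H^{\sigma}(\mathbb{R}^3)\hookrightarrow L^{p}$ with $\frac1p=\frac12-\frac{\sigma}{3}$ gives, for $p=3/\alpha$, the choice $\sigma=\frac32-\alpha$. This requires $0\le \sigma<\frac32$, which holds since $0<\alpha\le\frac32$. Hence
$$\|\nabla f\|_{L^{3/\alpha}}\leq C\|\Lambda^{1+\sigma}f\|_{L^2}=C\|\Lambda^{\frac52-\alpha}f\|_{L^2}.$$

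\emph{Interpolation step.} We interpolate $\dot H^{\frac52-\alpha}$ between $\dot H^{\alpha}$ and $\dot H^{s\alpha}$. By Plancherel and H\"older's inequality in frequency,
$$\|\Lambda^{\beta}f\|_{L^2}\leq \|\Lambda^{\alpha}f\|_{L^2}^{1-\theta}\|\Lambda^{s\alpha}f\|_{L^2}^{\theta},\qquad \beta=(1-\theta)\alpha+\theta s\alpha,$$
valid for $\theta\in[0,1]$. Setting $\beta=\frac52-\alpha$ gives $\theta=\frac{5-4\alpha}{2\alpha(s-1)}$, and then $1-\theta=\frac{2(s+1)\alpha-5}{2\alpha(s-1)}$. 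These are exactly the exponents in \eqref{keyinequality}.

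\emph{Checking the range of $\theta$.} We need $\theta\in(0,1)$, which I expect to be the only real point to verify. The condition $\theta>0$ is $\alpha<\frac54$. The condition $\theta<1$ is equivalent to $5-4\alpha<2\alpha(s-1)$, i.e. $\alpha>\frac{5}{2(s+1)}$. Both are precisely the hypotheses, and we implicitly need $s>1$ so that the denominator is positive. In the admissible range $\frac{5}{2(s+1)}<\alpha<\frac54$, one has $\frac52-\alpha>\alpha$ and $\frac52-\alpha<s\alpha$, so the target regularity lies strictly between the two endpoint norms.

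Finally, $f\in\dot H^{\alpha}\cap\dot H^{s\alpha}$ makes the right-hand side finite, and the Plancherel computation then shows $f\in\dot H^{\frac52-\alpha}$, which justifies the embedding. Combining the three displayed inequalities yields \eqref{keyinequality}.
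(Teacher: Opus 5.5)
Your proof is correct, but it takes a different route from the paper. The paper never invokes the Sobolev embedding. It writes $\nabla f=\sum_j\dot\Delta_j\nabla f$ and applies Bernstein's inequality $\|\dot\Delta_j\nabla f\|_{L^{3/\alpha}}\le C2^{j(\frac52-\alpha)}\|\dot\Delta_j f\|_{L^{2}}$. It then bounds the blocks with $j<N$ by $C2^{N(\frac52-2\alpha)}\|f\|_{\dot H^{\alpha}}$ and those with $j\ge N$ by $C2^{N(\frac52-(s+1)\alpha)}\|f\|_{\dot H^{s\alpha}}$, and balances the two terms by choosing $N$.

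In that argument the hypotheses $\alpha<\frac54$ and $\alpha>\frac{5}{2(s+1)}$ are exactly the convergence conditions of the two geometric series. Since only $\sup_j 2^{j\alpha}\|\dot\Delta_j f\|_{L^2}$ and $\sup_j 2^{js\alpha}\|\dot\Delta_j f\|_{L^2}$ enter, it actually gives the slightly stronger bound with the Besov norms $\|f\|_{\dot B^{\alpha}_{2,\infty}}$, $\|f\|_{\dot B^{s\alpha}_{2,\infty}}$ on the right. In exchange, it is tied to the open range of $\alpha$.

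Your factorization through $\dot H^{\frac52-\alpha}$ is more modular. The only nontrivial input is the standard embedding $\dot H^{\frac32-\alpha}\hookrightarrow L^{3/\alpha}$, and the interpolation step is H\"older's inequality in frequency with constant $1$. Because that step holds for every $\theta\in[0,1]$, your argument also covers the closed range $\frac{5}{2(s+1)}\le\alpha\le\frac54$, where at the endpoints the inequality is just the Sobolev embedding. So the strict inequalities you singled out as the main point to check are not actually needed in your route.

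The Riesz-transform step is harmless but dispensable. You can apply the embedding directly to $\partial_j f$ and use $\|\Lambda^{\sigma}\partial_j f\|_{L^2}\le\|\Lambda^{1+\sigma}f\|_{L^2}$, which follows from Plancherel.
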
\begin{proof}
In view of the Bernstein inequality, we see that
\be\ba
\|\nabla f\|_{L^{\f{3}{\alpha}}}\leq& C\sum_{j<N}\|\dot{\Delta}_{j}\nabla f \|_{L^{\f{3}{\alpha}}}+C\sum_{j\geq N}\|\dot{\Delta}_{j}\nabla f \|_{L^{\f{3}{\alpha}}}\\
\leq& C\sum_{j<N}2^{j[1+3(\f12-\f{\alpha}{3})-\alpha
]}\|\dot{\Delta}_{j}\nabla f \|_{L^{2}}+C\sum_{j\geq N}2^{j[1+3(\f12-\f{\alpha}{3})-s\alpha
]}2^{js\alpha}\|\dot{\Delta}_{j}\nabla f \|_{L^{2}}.
\ea\ee
Thanks to $\alpha<\f54$ and $\alpha>\f{5}{2 (s+1)}$, we further have
\be\ba
\|\nabla f \|_{L^{\f{3}{\alpha}}}\leq& C\sum_{j<N}\|\dot{\Delta}_{j}\nabla f \|_{L^{\f{3}{\alpha}}}+C\sum_{j\geq N}\|\dot{\Delta}_{j}\nabla f \|_{L^{\f{3}{\alpha}}}\\
\leq& C 2^{N[
\f52-2\alpha
]}\|  f \|_{\dot{H}^{\alpha}}+C2^{N[
\f52-(s+1)\alpha
]}\| f \|_{\dot{H}^{s\alpha}}.
\ea\ee
To bound the right hand side of inequality, we choose $N$ such that
$$C 2^{N[
\f52-2\alpha
]}\|  f\|_{\dot{H}^{\alpha}}\approx C2^{N[
\f52-(s+1)\alpha
]}\|  f \|_{\dot{H}^{s\alpha}}.$$
Hence, we get the desired inequality \eqref{keyinequality}. The proof of this lemma is completed.
\end{proof}
Recall the homogeneous case of Kato-Ponce commutator estimates.
\begin{lemma}(\cite[Corollary 5.2, p. 68]{[Li]})\label{odelemma}
Let $\alpha>0$ and $1< p< \infty$. Then for any two Schwartz functions $f,g$ on $\mathbb{R}^{d}$, there holds
\be\label{katoponce}
\|\Lambda^{\alpha}(fg)-f\Lambda^{\alpha} g\|_{L^{p }(\mathbb{R}^{d})}\leq
C\|\nabla f\|_{L^{p_{1}  }(\mathbb{R}^{d})}\|\Lambda^{\alpha-1}g\|_{L^{p_{2}}(\mathbb{R}^{d})}
+C\|\Lambda^{\alpha} f\|_{L^{p_{3}}(\mathbb{R}^{d})}\|  g\|_{L^{p_{4} }(\mathbb{R}^{d})},
\ee
where $1<p_{1}, p_{2}, p_{3}, p_{4}\leq\infty $ and $\f{1}{p }=\f{1}{p_{1}} +\f{1}{p_{2}}= \f{1}{p_{3}} +\f{1}{p_{4}}$.
\end{lemma}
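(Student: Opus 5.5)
We would prove \eqref{katoponce} by Bony's paraproduct decomposition combined with Coifman--Meyer bilinear multiplier estimates; this is the standard route to Kato--Ponce commutator bounds. Let $\dot{\Delta}_{j}$ be the homogeneous Littlewood--Paley blocks and $S_{j}=\sum_{k<j}\dot{\Delta}_{k}$. Write
$$T_{f}g=\sum_{j}S_{j-1}f\,\dot{\Delta}_{j}g,\qquad R(f,g)=\sum_{|j-k|\leq 1}\dot{\Delta}_{j}f\,\dot{\Delta}_{k}g,$$
so that $fg=T_{f}g+T_{g}f+R(f,g)$. Applying the same decomposition to $f\Lambda^{\alpha}g$ gives
$$\Lambda^{\alpha}(fg)-f\Lambda^{\alpha}g=\big(\Lambda^{\alpha}T_{f}g-T_{f}\Lambda^{\alpha}g\big)+\big(\Lambda^{\alpha}T_{g}f-T_{\Lambda^{\alpha}g}f\big)+\big(\Lambda^{\alpha}R(f,g)-R(f,\Lambda^{\alpha}g)\big).$$
The plan is to show that the first group is bounded by the first term on the right of \eqref{katoponce}, and the last two groups by the second term (or by either one, as convenient).

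\emph{Step 1: the genuine commutator.} Each piece of $\Lambda^{\alpha}T_{f}g-T_{f}\Lambda^{\alpha}g$ is $[\Lambda^{\alpha}\tilde{\varphi}_{j}(D),S_{j-1}f]\dot{\Delta}_{j}g$, where $\tilde{\varphi}_{j}$ is a fattened annulus cutoff. The operator $\Lambda^{\alpha}\tilde{\varphi}_{j}(D)$ has kernel $2^{j(\alpha+d)}K(2^{j}\cdot)$ with $K$ Schwartz. We would expand $S_{j-1}f(x-y)-S_{j-1}f(x)=-\int_{0}^{1}y\cdot\nabla S_{j-1}f(x-\theta y)\,d\theta$. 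This exhibits the piece as a bilinear operator with symbol $m(\xi,\eta)$, where $\xi$ is the frequency of $\nabla f$ (low) and $\eta$ the frequency of $\Lambda^{\alpha-1}g$ (high). The symbol is smooth and homogeneous of degree $0$ and satisfies the Coifman--Meyer symbol bounds. The Coifman--Meyer theorem, valid for $1<p<\infty$ with $1<p_{1},p_{2}\leq\infty$, then yields $C\|\nabla f\|_{L^{p_{1}}}\|\Lambda^{\alpha-1}g\|_{L^{p_{2}}}$.

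\emph{Step 2: high--low and low--high terms.} In $\Lambda^{\alpha}T_{g}f=\sum_{j}\Lambda^{\alpha}(S_{j-1}g\,\dot{\Delta}_{j}f)$ the output frequency is about $2^{j}$. We would therefore write $\Lambda^{\alpha}=(\Lambda^{\alpha}\Lambda^{-\alpha}\tilde{\varphi}_{j})\Lambda^{\alpha}$ acting on $\dot{\Delta}_{j}f$, which gives a Coifman--Meyer symbol in $(\xi,\eta)$ with $\eta$ the frequency of $\Lambda^{\alpha}f$. Hence this term is bounded by $C\|g\|_{L^{p_{4}}}\|\Lambda^{\alpha}f\|_{L^{p_{3}}}$. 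For $T_{\Lambda^{\alpha}g}f=\sum_{j}S_{j-1}\Lambda^{\alpha}g\,\dot{\Delta}_{j}f$ we would move the $\Lambda^{\alpha}$ from the low frequency $g$ onto the high frequency $f$. The resulting symbol $|\xi|^{\alpha}|\eta|^{-\alpha}\chi(\xi/\eta)$, with $|\xi|\ll|\eta|$, is a smooth Coifman--Meyer symbol because $\alpha>0$. This yields the same bound.

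\emph{Step 3: resonant terms, and the main obstacle.} For $R(f,\Lambda^{\alpha}g)$ the two frequencies are comparable, so $|\eta|^{\alpha}\approx|\xi|^{\alpha}$ and we can transfer $\Lambda^{\alpha}$ to $f$, obtaining $\|\Lambda^{\alpha}f\|_{L^{p_{3}}}\|g\|_{L^{p_{4}}}$. The difficult term is $\Lambda^{\alpha}R(f,g)$: the output frequency $\zeta=\xi+\eta$ may be much smaller than $2^{j}$. Its symbol $|\xi+\eta|^{\alpha}|\xi|^{-\alpha}$ restricted to $|\xi|\approx|\eta|$ is bounded but is not smooth at $\xi+\eta=0$ when $\alpha$ is not an even integer. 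We would handle this by a further Littlewood--Paley decomposition in the output frequency. The factor $(2^{k}/2^{j})^{\alpha}$ for $k\leq j+2$ gives geometric decay. Summing with the square-function (Fefferman--Stein vector-valued maximal) estimate, which needs $p<\infty$, then gives $C\|\Lambda^{\alpha}f\|_{L^{p_{3}}}\|g\|_{L^{p_{4}}}$.

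The endpoint cases where some $p_{i}=\infty$ require care, since $L^{\infty}$ admits no Littlewood--Paley characterization. We would always place the $L^{\infty}$ function in the low-frequency or maximal-function slot, so that only the finite-exponent factor needs the square function. This endpoint bookkeeping, together with the non-smooth resonant symbol for small $\alpha$, is where we expect the main work to lie.
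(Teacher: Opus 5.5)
The paper does not prove this lemma; it only quotes it from Li. So your outline has to stand on its own. Its architecture is the standard paraproduct/Coifman--Meyer one and is sound in most places:
\begin{itemize}
\item the three-group identity is correct;
\item Step~1 yields a genuine Coifman--Meyer symbol, because the low cutoff $\chi(2^{-j+1}\xi)$ is smooth at $\xi=0$ and $|\theta\xi+\eta|\sim 2^{j}$ on the support;
\item the pieces $\Lambda^{\alpha}T_{g}f$ and $R(f,\Lambda^{\alpha}g)$ have symbols smooth away from $(\xi,\eta)=(0,0)$;
\item a second dyadic decomposition in the output frequency is the right way to treat $\Lambda^{\alpha}R(f,g)$.
\end{itemize}

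There is one false step. In Step~2 you assert that $|\xi|^{\alpha}|\eta|^{-\alpha}\chi(\xi/\eta)$ is a smooth Coifman--Meyer symbol ``because $\alpha>0$''. Here $\xi$ is the \emph{low} frequency and runs down to $0$, where $\chi(\xi/\eta)\equiv 1$. Unless $\alpha$ is an even integer, the symbol is not smooth on the set $\{\xi=0,\ \eta\neq 0\}$: generically its $\xi$-derivatives of order $|a|>\alpha$ grow like $|\xi|^{\alpha-|a|}|\eta|^{-\alpha}$ there. That violates $|\partial_{\xi}^{a}m|\lesssim(|\xi|+|\eta|)^{-|a|}$, so the Coifman--Meyer theorem cannot be invoked for $T_{\Lambda^{\alpha}g}f$ as you set it up. Positivity of $\alpha$ gives decay, not smoothness.

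The repair is the device you already use in Step~3, applied to the low-frequency factor. Let $\tilde{\Delta}_{k}$ have the smooth symbol $|2^{-k}\xi|^{\alpha}\varphi(2^{-k}\xi)$ and $\tilde{\Delta}'_{j}$ the symbol $|2^{-j}\xi|^{-\alpha}\varphi(2^{-j}\xi)$. Then $S_{j-1}\Lambda^{\alpha}g=\sum_{l\geq 2}2^{(j-l)\alpha}\tilde{\Delta}_{j-l}g$ and $\dot{\Delta}_{j}f=2^{-j\alpha}\tilde{\Delta}'_{j}\Lambda^{\alpha}f$, so
\[
T_{\Lambda^{\alpha}g}f=\sum_{l\geq 2}2^{-l\alpha}\sum_{j}\tilde{\Delta}_{j-l}g\;\tilde{\Delta}'_{j}\Lambda^{\alpha}f .
\]
For fixed $l$, the $j$-th summand has Fourier support in $\{|\zeta|\sim 2^{j}\}$. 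Hence, for $1<p<\infty$, the inner sum has $L^{p}$ norm at most $C\|(\sum_{j}|\tilde{\Delta}_{j-l}g|^{2}|\tilde{\Delta}'_{j}\Lambda^{\alpha}f|^{2})^{1/2}\|_{L^{p}}$. Put the supremum in $j$ on whichever factor is in $L^{\infty}$, or on $g$ via $Mg$ when $p_{3},p_{4}<\infty$, and the square function on the other factor. This gives $C\|\Lambda^{\alpha}f\|_{L^{p_{3}}}\|g\|_{L^{p_{4}}}$ uniformly in $l$, and the factor $2^{-l\alpha}$ sums. With this correction your outline proves \eqref{katoponce}.
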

The following fractional Leibniz rule is a consequence of Lemma \ref{odelemma}.
\begin{lemma}
Let $\alpha>0$ and $\Lambda^{\alpha}f\in L^{p_{1} }(\mathbb{R}^{d}), g \in L^{p_{2} }(\mathbb{R}^{d}), f\in L^{p_{3} }(\mathbb{R}^{d}), \Lambda^{\alpha }g \in L^{p_{4} }(\mathbb{R}^{d})$.  Assume  that $1<p_{1},p_{2},p_{3},p_{4}\leq\infty$ and $1<p<\infty$. Then there holds
\be\label{fraleibnilaw}
\|\Lambda^{\alpha}(fg) \|_{L^{p }(\mathbb{R}^{d})}\leq
C\|\Lambda^{\alpha}f\|_{L^{p_{1} }(\mathbb{R}^{d})}
\|g\|_{L^{p_{2} }(\mathbb{R}^{d})}
+C\|  f\|_{L^{p_{3} }(\mathbb{R}^{d})}\|\Lambda^{\alpha }g\|_{L^{p_{4} }(\mathbb{R}^{d})},
\ee
where
\be\label{fraleibnilawcon}\f{1}{p}=\f{1}{p_{1}}+\f{1}{p_{2}}=\f{1}{p_{3}}+\f{1}{p_{4}}.
\ee\end{lemma}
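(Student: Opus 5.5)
The fractional Leibniz rule \eqref{fraleibnilaw} should follow from the Kato--Ponce commutator estimate in Lemma \ref{odelemma} by writing
\[
\Lambda^{\alpha}(fg)=\big[\Lambda^{\alpha}(fg)-f\Lambda^{\alpha}g\big]+f\Lambda^{\alpha}g .
\]
I would first prove the estimate for Schwartz functions $f,g$ and then pass to the general case by density.

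\textbf{The easy term.} By H\"older's inequality with $\f1p=\f1{p_3}+\f1{p_4}$,
\[
\|f\Lambda^{\alpha}g\|_{L^p}\leq \|f\|_{L^{p_3}}\|\Lambda^{\alpha}g\|_{L^{p_4}}.
\]
This is exactly the second term on the right of \eqref{fraleibnilaw}.

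\textbf{The commutator term.} Applying \eqref{katoponce} directly produces the pieces $\|\nabla f\|\,\|\Lambda^{\alpha-1}g\|$ and $\|\Lambda^{\alpha}f\|\,\|g\|$. The second piece is the desired $\|\Lambda^{\alpha}f\|_{L^{p_1}}\|g\|_{L^{p_2}}$, with $(p_3,p_4)$ in Lemma \ref{odelemma} renamed $(p_1,p_2)$. The first piece, however, is not of the required form. I see two ways to handle it.
\begin{itemize}
\item \emph{Symmetrize.} Also write $\Lambda^{\alpha}(fg)=[\Lambda^{\alpha}(fg)-g\Lambda^{\alpha}f]+g\Lambda^{\alpha}f$ and use whichever decomposition is convenient. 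This alone still leaves a $\nabla f\,\Lambda^{\alpha-1}g$ type term.
\item \emph{Interpolate.} Control $\|\nabla f\|_{L^{r_1}}\|\Lambda^{\alpha-1}g\|_{L^{r_2}}$ by a convex combination of the two endpoint products via Gagliardo--Nirenberg:
\[
\|\nabla f\|_{L^{r_1}}\lesssim \|f\|_{L^{p_3}}^{1-\theta}\|\Lambda^{\alpha}f\|_{L^{p_1}}^{\theta},\qquad
\|\Lambda^{\alpha-1}g\|_{L^{r_2}}\lesssim \|g\|_{L^{p_2}}^{\theta}\|\Lambda^{\alpha}g\|_{L^{p_4}}^{1-\theta},
\]
with $\theta=1/\alpha$ and $r_1,r_2$ chosen so that $\f1{r_1}+\f1{r_2}=\f1p$. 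Young's inequality $a^{\theta}b^{1-\theta}\leq a+b$ then gives the claim. The exponents are consistent: $\f1{r_1}=\f{\theta}{p_1}+\f{1-\theta}{p_3}$ and $\f1{r_2}=\f{\theta}{p_2}+\f{1-\theta}{p_4}$ sum to $\f1p$ by \eqref{fraleibnilawcon}.
\end{itemize}

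\textbf{Main obstacle.} The interpolation step is the delicate part. It requires $\alpha\geq1$, and it needs care at endpoint exponents such as $p_i=\infty$. For $0<\alpha<1$ the term $\Lambda^{\alpha-1}g$ is a negative-order derivative, and the commutator form in Lemma \ref{odelemma} is not well suited. In that range I would instead invoke the standard Kato--Ponce/Grafakos--Oh fractional Leibniz rule directly, proved by a paraproduct decomposition $fg=T_fg+T_gf+R(f,g)$. In that decomposition:
\begin{itemize}
\item $\Lambda^{\alpha}$ falls on the high-frequency factor in each paraproduct;
\item the remainder is handled using $\alpha>0$;
\item the maximal function together with the Fefferman--Stein vector-valued inequality controls $L^{\infty}$ endpoints on the low-frequency factor.
\end{itemize}
Since the paper only cites the result, I would state it as a consequence of \cite{[Li]} and verify only the exponent bookkeeping above.
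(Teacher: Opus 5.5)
\textbf{Verdict.} Your proposal is correct, and it starts from the same place as the paper. The paper gives no argument beyond the sentence that \eqref{fraleibnilaw} ``is a consequence of Lemma \ref{odelemma}''. Your decomposition $\Lambda^{\alpha}(fg)=[\Lambda^{\alpha}(fg)-f\Lambda^{\alpha}g]+f\Lambda^{\alpha}g$, followed by H\"older, is the natural reading of that sentence.

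\textbf{What you add.} You correctly notice that \eqref{katoponce} leaves the term $\|\nabla f\|_{L^{r_1}}\|\Lambda^{\alpha-1}g\|_{L^{r_2}}$, which is not of Leibniz form. So the paper's one-line deduction is not by itself a proof.

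\textbf{The interpolation repair.} Your Gagliardo--Nirenberg fix is sound where it applies. Derivative counting forces $\theta\ge1/\alpha$ on the $f$-side and $\theta\le 1/\alpha$ on the $g$-side, so $\theta=1/\alpha$ is the only choice, and your exponent bookkeeping checks out.
\begin{itemize}
\item For $\alpha>1$ it even survives $L^\infty$ endpoints. Split frequencies at $N$ and use the pointwise bounds $|\nabla f|\lesssim N\mathcal{M}f+N^{1-\alpha}\mathcal{M}(\Lambda^{\alpha}f)$ and $|\Lambda^{\alpha-1}g|\lesssim N^{\alpha-1}\mathcal{M}g+N^{-1}\mathcal{M}(\Lambda^{\alpha}g)$. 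Optimize in $N$, then apply the maximal theorem, which is fine since all $p_i>1$.
\item At $\alpha=1$ the $f$-side reduces to $\|\nabla f\|_{L^{p_1}}\lesssim\|\Lambda f\|_{L^{p_1}}$, which is false for $p_1=\infty$. Swapping the roles of $f$ and $g$ covers the case $p_4<\infty$, but not $p_1=p_4=\infty$.
\end{itemize}

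\textbf{The range $0<\alpha<1$.} Here the commutator form is useless, exactly as you say, and the estimate must come from the independent paraproduct proof in \cite{[Li]} or Grafakos--Oh. This is not a marginal case for the paper. In \eqref{4.8} the rule is applied to $\Lambda^{(s-1)\alpha}$, and for $s=2$ that is order $\alpha<1$ whenever $5/6<\alpha<1$. The exponents there are always finite ($\frac{6}{3-2\alpha}$, $\frac{3}{\alpha}$, $2$), so the endpoint subtleties never matter for the paper's use. Both you and the paper therefore ultimately rest on the cited theorem. Your version has the merit of stating precisely which part genuinely follows from Lemma \ref{odelemma}.

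\textbf{Minor point.} Passing from Schwartz functions to the general case ``by density'' does not work literally when some $p_i=\infty$. Either state the inequality for Schwartz functions, as \cite{[Li]} does, or use an approximation argument together with Fatou's lemma.
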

\section{Time derivative estimates of solutions to the fractional Navier-Stokes equations}
\label{timeestimate}
\setcounter{section}{3}\setcounter{equation}{0}

 The purpose of this section is to prove Theorem \ref{the1.2} involving  the time derivative estimates of solutions to the fractional Navier-Stokes equations. To this end, we begin with  a series of ladder
 inequalities of solutions.

\begin{lemma}\label{lem3.1}Let $u$ satisfy   the  fractional Navier-Stokes equations
\eqref{GNS}. Then for any positive $t<T$ and $\eta$, there exists a positive constant $C$ such that
\begin{align}
&\label{3.2v1}
 \f{d}{dt}\|\Lambda^{\alpha}u (t)\|^{2}_{L^{2}}
+  \|\Lambda^{2\alpha}u (t)\|_{L^{2}}^{2}
\leq C\|\Lambda^{\alpha}u (t)\|_{L^{2}}^{\f{2(8\alpha-5)}{6\alpha-5}},
\\&\label{3.6v1}
\f{d}{dt}\|\Lambda^{\alpha}u (t)\|^{2}_{L^{2}}
+ \f{1}{2}\|\Lambda^{2\alpha}u (t)\|^{2}_{L^{2}}+\f{1}{2}\| u _{t}\|_{L^{2}}^{2}\leq C\|\Lambda^{\alpha}u (t)\|_{L^{2}}^{\f{2(8\alpha-5)}{6\alpha-5}},
\\&\label{3.13v1}
 \f{d}{dt}\| u _{t}\|^{2}_{L^{2}}+ \|\Lambda^{ \alpha}u _{t}\|_{L^{2}}^{2} \leq C\|u _{t}\|^{\f{2(10\alpha-5)}{8\alpha-5}}_{L^{2}}+C\|\Lambda^{\alpha}u \|^{\f{2(10\alpha-5)}{6\alpha-5}}_{L^{2}}+2\eta\|\Lambda^{\alpha}u \|_{L^{2}}^{\f{ 4\alpha}{6\alpha-5}}
\|\Lambda^{2\alpha}u \|^{2}_{L^{2}},
\\&\nonumber
 \f{d}{dt}\|\Lambda^{ \alpha} u _{t}\|^{2}_{L^{2}}+\f{1}{2}\|\Lambda^{2 \alpha}u _{t}\|_{L^{2}}^{2}+\f{1}{2}\| u _{tt}\|_{L^{2}}^{2}
\\&\leq  C\|\Lambda^{\alpha}u_{t}\|_{L^{2}}^{\f{2(12\alpha-5)}{10\alpha-5}}
+C\|\Lambda^{\alpha}u \|_{L^{2}}^{\f{2(12\alpha-5)}{6\alpha-5}}
+4\eta\|\Lambda^{ \alpha}u \|^{\f{8\alpha}{ 6\alpha-5}}_{L^{2}} \|\Lambda^{2 \alpha}u \|_{L^{2}}^{2}.\label{3.24v2}
\end{align}
\end{lemma}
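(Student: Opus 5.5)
All four ladder inequalities come from $L^2$ energy estimates. In each we test the equation, or its time derivative, against a suitable multiplier. The nonlinear term is then handled by the trilinear bound
$$\Big|\int A\cdot\nabla B\cdot D\,dx\Big|\le \|A\|_{L^{\frac{6}{3-2\alpha}}}\|\nabla B\|_{L^{\frac3\alpha}}\|D\|_{L^2},$$
together with the Sobolev embedding $\|A\|_{L^{6/(3-2\alpha)}}\le C\|\Lambda^\alpha A\|_{L^2}$ and the interpolation \eqref{keyinequality}. With $s=2$, \eqref{keyinequality} reads
$$\|\nabla B\|_{L^{3/\alpha}}\le C\|\Lambda^\alpha B\|_{L^2}^{\frac{6\alpha-5}{2\alpha}}\|\Lambda^{2\alpha}B\|_{L^2}^{\frac{5-4\alpha}{2\alpha}}.$$
Every nonlinear term is closed by Young's inequality. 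The exponents are forced by the total degree, so the real work is bookkeeping.

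\emph{Steps for \eqref{3.2v1} and \eqref{3.6v1}.} Multiply \eqref{GNS} by $\Lambda^{2\alpha}u$ and integrate; the pressure drops out since $\mathrm{div}\,u=0$. This gives
$$\frac12\frac{d}{dt}\|\Lambda^\alpha u\|_{L^2}^2+\|\Lambda^{2\alpha}u\|_{L^2}^2\le \|u\|_{L^{6/(3-2\alpha)}}\|\nabla u\|_{L^{3/\alpha}}\|\Lambda^{2\alpha}u\|_{L^2}\le C\|\Lambda^\alpha u\|_{L^2}^{\frac{8\alpha-5}{2\alpha}}\|\Lambda^{2\alpha}u\|_{L^2}^{\frac{5-2\alpha}{2\alpha}}.$$
Since $\frac{5-2\alpha}{2\alpha}<2$ exactly when $\alpha>5/6$, Young's inequality with exponent $\frac{4\alpha}{5-2\alpha}$ absorbs the $\Lambda^{2\alpha}u$ factor. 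The remaining power is $\frac{8\alpha-5}{2\alpha}\cdot\frac{4\alpha}{6\alpha-5}=\frac{2(8\alpha-5)}{6\alpha-5}$, which is \eqref{3.2v1}.

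For \eqref{3.6v1}, test instead with $u_t$. This gives
$$\|u_t\|_{L^2}^2+\frac12\frac{d}{dt}\|\Lambda^\alpha u\|_{L^2}^2\le \Big|\int u\cdot\nabla u\cdot u_t\,dx\Big|\le \frac14\|u_t\|_{L^2}^2+C\|u\|^2_{L^{6/(3-2\alpha)}}\|\nabla u\|^2_{L^{3/\alpha}}.$$
The last term is bounded exactly as above. Adding half of this to half of the computation for \eqref{3.2v1} yields \eqref{3.6v1}.

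\emph{Steps for \eqref{3.13v1} and \eqref{3.24v2}.} Differentiate \eqref{GNS} in $t$:
$$u_{tt}+\Lambda^{2\alpha}u_t+u_t\cdot\nabla u+u\cdot\nabla u_t+\nabla p_t=0.$$
For \eqref{3.13v1}, test with $u_t$. The term $u\cdot\nabla u_t\cdot u_t$ vanishes, leaving
$$\Big|\int u_t\cdot\nabla u\cdot u_t\,dx\Big|\le C\|\Lambda^\alpha u_t\|_{L^2}\|\nabla u\|_{L^{3/\alpha}}\|u_t\|_{L^2}.$$
Interpolating $\nabla u$ by \eqref{keyinequality} gives a product $\|\Lambda^\alpha u_t\|\,\|u_t\|\,\|\Lambda^\alpha u\|^{a}\|\Lambda^{2\alpha}u\|^{b}$. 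A three-factor Young inequality absorbs $\|\Lambda^\alpha u_t\|^2$ and leaves the $\eta$-term $\eta\|\Lambda^\alpha u\|^{4\alpha/(6\alpha-5)}\|\Lambda^{2\alpha}u\|^2$. It also leaves powers of $\|u_t\|$ and $\|\Lambda^\alpha u\|$, which we split with one more Young inequality into the two stated terms; their exponents are fixed by the scaling weights $|u_t|\sim 4\alpha-\dots$.

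For \eqref{3.24v2}, test the differentiated equation with $\Lambda^{2\alpha}u_t$ and also with $u_{tt}$, then add the two results with weight $\frac12$, mirroring the passage from \eqref{3.2v1} to \eqref{3.6v1}. The terms $\int(u_t\cdot\nabla u+u\cdot\nabla u_t)\cdot D$, with $D\in\{\Lambda^{2\alpha}u_t,u_{tt}\}$, are bounded by the trilinear estimate. Whenever $\nabla u_t$ appears in $L^{3/\alpha}$, we interpolate it via \eqref{keyinequality} between $\Lambda^\alpha u_t$ and $\Lambda^{2\alpha}u_t$. Whenever $\nabla u$ appears, we interpolate between $\Lambda^\alpha u$ and $\Lambda^{2\alpha}u$, which again produces an $\eta\|\Lambda^{2\alpha}u\|^2$ term.

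\emph{Main obstacle.} The hard part is the exponent bookkeeping in the mixed terms of \eqref{3.13v1} and \eqref{3.24v2}, where the leftover $\|\Lambda^{2\alpha}u\|$ cannot be absorbed on the left and must be kept with a small coefficient $\eta$. I would first assign homogeneity weights: $\|\Lambda^\alpha u\|\sim$ weight $1$, $\|u_t\|\sim\frac{8\alpha-5}{6\alpha-5}$, $\|\Lambda^\alpha u_t\|\sim\frac{10\alpha-5}{6\alpha-5}$, and so on. Then I would check that each Young splitting produces exactly the powers displayed. The constraint $5/6<\alpha<5/4$ is what guarantees that all Young exponents stay in $(1,\infty)$.
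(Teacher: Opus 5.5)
Your treatment of \eqref{3.2v1}, \eqref{3.6v1} and of the $u\cdot\nabla u_t$ terms follows the paper and is fine. The gap is in the step you call bookkeeping: the terms containing $u_t\cdot\nabla u$. Matching homogeneity only makes the Young weights sum to one; they must also be nonnegative, and here they are not. Write $A=\|u_t\|_{L^2}$, $B=\|\Lambda^\alpha u\|_{L^2}$, $D=\|\Lambda^{2\alpha}u\|_{L^2}$, $X=\|\Lambda^\alpha u_t\|_{L^2}$. You want to bound $XAB^{\frac{6\alpha-5}{2\alpha}}D^{\frac{5-4\alpha}{2\alpha}}$ by $\varepsilon X^2+CA^{\frac{2(10\alpha-5)}{8\alpha-5}}+CB^{\frac{2(10\alpha-5)}{6\alpha-5}}+\eta B^{\frac{4\alpha}{6\alpha-5}}D^2$. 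The exponents of $X$, $A$, $D$ force weights $\frac12$, $\frac{8\alpha-5}{2(10\alpha-5)}$, $\frac{5-4\alpha}{4\alpha}$ on the corresponding terms. That leaves weight $\frac{44\alpha^2-70\alpha+25}{4\alpha(10\alpha-5)}$ for the pure $B$-term, which is negative for $\frac56<\alpha<\frac{35+5\sqrt5}{44}\approx 1.05$.

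Concretely, at $\alpha=1$, after optimizing in $X$, you would need $A^2BD\le CA^{10/3}+CB^{10}+\eta B^4D^2$. This fails for $A=1$, $B=\delta$, $D=\delta^{-2}$ as $\delta\to0$. The same defect appears for $\int u_t\cdot\nabla u\cdot\Lambda^{2\alpha}u_t$ and $\int u_t\cdot\nabla u\cdot u_{tt}$ in \eqref{3.24v2}. There the leftover weight is $\frac{52\alpha^2-80\alpha+25}{4\alpha(12\alpha-5)}$, negative for $\frac56<\alpha<\frac{20+5\sqrt3}{26}\approx1.10$. So your claim that $5/6<\alpha<5/4$ keeps every Young exponent admissible is false. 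The paper's \eqref{3.10} and \eqref{3.15} perform exactly this split, so following the paper does not close the gap.

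The inequalities are still true, even with $\eta=0$, once you feed the equation back in.

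For \eqref{3.13v1}: from $\Lambda^{2\alpha}u=-u_t-\mathbb{P}(u\cdot\nabla u)$ and your trilinear bound you get $D\le A+CB^{\frac{8\alpha-5}{2\alpha}}D^{\frac{5-4\alpha}{2\alpha}}$, hence $D\le 2A+CB^{\frac{8\alpha-5}{6\alpha-5}}$. Substituting gives $CA^{\frac{5-2\alpha}{\alpha}}B^{\frac{6\alpha-5}{\alpha}}+CA^2B^{\frac{4\alpha}{6\alpha-5}}$. Both terms split into the two target powers with positive weights. Alternatively, use $\mathrm{div}\,u_t=0$ to move the derivative onto $u_t$, bound by $\|\Lambda^\alpha u\|_{L^2}\|\Lambda^{1-\alpha}(u_t\otimes u_t)\|_{L^2}$, and arrive directly at $CB^{\frac{4\alpha}{6\alpha-5}}A^2$.

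For \eqref{3.24v2}, $A$ is not allowed on the right. Instead use $D^2\le B\|\Lambda^{3\alpha}u\|_{L^2}$ together with $\|\Lambda^{3\alpha}u\|_{L^2}\le 2X+CB^{\frac{10\alpha-5}{6\alpha-5}}$. The latter follows from applying $\Lambda^\alpha$ to the equation, the fractional Leibniz rule, and interpolation of $\dot H^{5/2}$ between $\dot H^{\alpha}$ and $\dot H^{3\alpha}$. This gives $D\le CB^{1/2}X^{1/2}+CB^{\frac{8\alpha-5}{6\alpha-5}}$. The critical $X$-weight then becomes $\frac{5(10\alpha-5)}{4\alpha(12\alpha-5)}$, which is less than $1$ precisely because $(6\alpha-5)(8\alpha-5)>0$.
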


\begin{remark}\label{remark3.1}
As a byproduct of this corollary, we have
 \begin{enumerate}[(1)]
      \item
 $\Lambda^{2\alpha}u,   u _{t}\in L^{\f{2( 6\alpha-5)}{8\alpha-5}}(0,T; L^{2});$
 \item $ \Lambda^{ \alpha}u _{t} \in L^{\f{2(6\alpha-5)}{ 10\alpha-5}}(0,T; L^{2});$
 \item$
 \Lambda^{2 \alpha}u _{t},   u _{tt} \in L^{^{\f{2(6\alpha-5)}{12\alpha-5}}}(0,T; L^{2}).$
 \end{enumerate}
\end{remark}

The proof of Lemma \ref{lem3.1} will
be divided into a sequence of subestimates as follows.
\begin{proof}
(1)
Multiplying the  fractional Navier-Stokes equations
\eqref{GNS} by $\Lambda^{2\alpha}u$, we arrive at
 \be\label{H1}
\f{1}{2}\f{d}{dt}\|\Lambda^{\alpha}u (t)\|^{2}_{L^{2}}
+ \|\Lambda^{2\alpha}u (t)\|^{2}_{L^{2}}=- \langle
u (t)\cdot\nabla u (t),\Lambda^{2\alpha}u (t)\rangle. \ee
According to the H\"older inequality, we find
$$\ba
|\langle u (t) \cdot\nabla u (t),\Lambda^{2\alpha}u (t)\rangle|
&\leq\|u (t)\|_{L^{\f{6}{3-2\alpha}}}\|\nabla u (t)\|
_{L^{\f{3}{\alpha}}}
\|\Lambda^{2\alpha}u (t)\|_{L^{2}}.
\ea$$
 By virtue of the Sobolev embedding and the
Gagliardo-Nirenberg inequality \eqref{keyinequality}, we obtain
$$
\|u (t)\|_{L^{\f{6}{3-2\alpha}} }\leq
C\|\Lambda^{\alpha}u (t)\|_{L^{2}},
$$
and
$$
\|\nabla u (t)\|_{L^{\f{3}{\alpha}} }\leq
C\|\Lambda^{\alpha}u (t)\|_{L^{2}}^{\f{6\alpha-5}{2\alpha}}
\|\Lambda^{2\alpha}u (t)\|^{\f{5-4\alpha}{2\alpha}}_{L^{2}}.
$$
This helps us to write
$$\ba
|\langle u (t) \cdot\nabla u (t),\Lambda^{2\alpha}u (t)\rangle|
 &\leq
C\|\Lambda^{\alpha}u (t)\|_{L^{2}}
\|\Lambda^{\alpha}u (t)\|_{L^{2}}^{\f{6\alpha-5}{2\alpha}}
\|\Lambda^{2\alpha}u (t)\|^{\f{5-2\alpha}{2\alpha}}_{L^{2}}
\\
&\leq
\f{1}{4} \|\Lambda^{2\alpha}u (t)\|_{L^{2}}^{2}
+C
\|\Lambda^{\alpha}u (t)\|_{L^{2}}^{\f{2(8\alpha-5)}{6\alpha-5}}
,\ea$$
 where we  used    Young's inequality.

Plugging this   into bound
\eqref{H1}, we know that
\be\label{3.2}
\f12\f{d}{dt}\|\Lambda^{\alpha}u (t)\|^{2}_{L^{2}}
+ \f34\|\Lambda^{2\alpha}u (t)\|_{L^{2}}^{2}
\leq C\|\Lambda^{\alpha}u (t)\|_{L^{2}}^{\f{2(8\alpha-5)}{6\alpha-5}}.
\ee

(2) Dotting the  fractional  Navier-Stokes equations
\eqref{GNS}  with $u _{t}$   and integrate over $\mathbb{R}^{3}$, we find
\be\label{grawall}
\f12\f{d}{dt}\|\Lambda^{\alpha}u (t)\|^{2}_{L^{2}}
+ \| u _{t}\|_{L^{2}}^{2}
=-\langle u\cdot\nabla u, u _{t}\rangle,
\ee
In view of the H\"older inequality, Sobolev inequality   and interpolation inequality \eqref{keyinequality}, we notice that
$$\ba\label{3.4}
\B|\langle u\cdot\nabla u, u _{t}\rangle\B|\leq&\|u _{t}\|_{L^{2}}\|u  \|_{L^{\f{6}{3-2\alpha}}}\|\nabla u \|_{L^{\f{3}{\alpha}}}\\\leq& C \|u _{t}\|_{L^{2}} \|\Lambda^{\alpha}u  \|_{L^{2}} \|\Lambda^{\alpha}u (t)\|_{L^{2}}^{\f{6\alpha-5}{2\alpha}}
\|\Lambda^{2\alpha}u (t)\|^{\f{5-4\alpha}{2\alpha}}_{L^{2}}
\\\leq&   \varepsilon\|u _{t}\|_{L^{2}}^{2} + C \|\Lambda^{\alpha}u (t)\|_{L^{2}}^{\f{8\alpha-5}{ \alpha}}
\|\Lambda^{2\alpha}u (t)\|^{\f{5-4\alpha}{ \alpha}}_{L^{2}}\\\leq&    \varepsilon\|u _{t}\|_{L^{2}}^{2} +C \|\Lambda^{\alpha}u (t)\|_{L^{2}}^{\f{2(8\alpha-5)}{6\alpha-5}}+
 \varepsilon\|\Lambda^{2\alpha}u (t)\|^{2} _{L^{2}},\ea
$$
where the Young inequality was used and     $\varepsilon$ will be chosen below.

Hence,
\be\ba\label{3.4v1}
\f12\f{d}{dt}\|\Lambda^{\alpha}u (t)\|^{2}_{L^{2}}
+ \| u _{t}\|_{L^{2}}^{2} \leq&    \varepsilon\|u _{t}\|_{L^{2}}^{2} +C \|\Lambda^{\alpha}u (t)\|_{L^{2}}^{\f{2(8\alpha-5)}{6\alpha-5}}+
 \varepsilon\|\Lambda^{2\alpha}u (t)\|^{2} _{L^{2}}.\ea
\ee
Adding   \eqref{3.2} and \eqref{3.4v1}, we arrive at
\be\ba
\f{d}{dt}\|\Lambda^{\alpha}u (t)\|^{2}_{L^{2}}
+  \f34\|\Lambda^{2\alpha}u (t)\|^{2}_{L^{2}}+ \| u _{t}\|_{L^{2}}^{2}\leq \varepsilon\|u _{t}\|_{L^{2}}^{2} + C\|\Lambda^{\alpha}u (t)\|_{L^{2}}^{\f{2(8\alpha-5)}{6\alpha-5}}+
 \varepsilon\|\Lambda^{2\alpha}u (t)\|^{2} _{L^{2}}.
\ea
\ee
Taking $\varepsilon$ small, we thus obtain
\be\ba\label{3.6}
\f{d}{dt}\|\Lambda^{\alpha}u (t)\|^{2}_{L^{2}}
+ \f{1}{2}\|\Lambda^{2\alpha}u (t)\|^{2}_{L^{2}}+\f{1}{2}\| u _{t}\|_{L^{2}}^{2}\leq C\|\Lambda^{\alpha}u (t)\|_{L^{2}}^{\f{2(8\alpha-5)}{6\alpha-5}}.
\ea
\ee

(3)
Differentiating the with respect to $t$, we have
\be\ba\label{3.7} u^{(2) }_{t}+\nu(-\Delta)^{\alpha}u_{t}+u_{t}\cdot\nabla u+u\cdot\nabla u_{t}+\nabla p_{t}=0,~~\te{div}\,u_{t}=0.
\ea
\ee
Multiplying  \eqref{3.7} with $u_{t}$  and taking the inner product, we discover that
\be\ba\label{3.8}
\f{1}{2}\f{d}{dt}\| u _{t}\|^{2}_{L^{2}}+\|\Lambda^{ \alpha}u _{t}\|_{L^{2}}^{2}=-\langle u_{t}\cdot\nabla u, u_{t}\rangle.
\ea
\ee
With the help of the H\"older inequality and interpolation inequality \eqref{keyinequality}, we conclude by the Young  inequality that
$$\ba
|-\langle u_{t}\cdot\nabla u, u_{t}\rangle|\leq&
\|u _{t}\|_{L^{2}}\|u_{t}  \|_{L^{\f{6}{3-2\alpha}}}\|\nabla u \|_{L^{\f{3}{\alpha}}}\\
\leq& C\|u _{t}\|_{L^{2}}\| \Lambda^{\alpha}u_{t}  \|_{L^{2}}\|\Lambda^{\alpha}u \|_{L^{2}}^{\f{6\alpha-5}{2\alpha}}
\|\Lambda^{2\alpha}u \|^{\f{5-4\alpha}{2\alpha}}_{L^{2}}\\
 \leq& \varepsilon\|\Lambda^{\alpha}u_{t}  \|_{L^{2}}^{2}+C\|u _{t}\|^{2}_{L^{2}} \|\Lambda^{\alpha}u \|_{L^{2}}^{\f{6\alpha-5}{ \alpha}}
\|\Lambda^{2\alpha}u \|^{\f{5-4\alpha}{ \alpha}}_{L^{2}}.
\ea
$$
Substituting this into \eqref{3.8}, we remark that
\be\ba\label{3.10}
\f{1}{2}\f{d}{dt}\| u _{t}\|^{2}_{L^{2}}+\f12\|\Lambda^{ \alpha}u _{t}\|_{L^{2}}^{2}  \leq&  C\|u _{t}\|^{2}_{L^{2}} \|\Lambda^{\alpha}u \|_{L^{2}}^{\f{6\alpha-5}{ \alpha}}
\|\Lambda^{2\alpha}u \|^{\f{5-4\alpha}{ \alpha}}_{L^{2}}
\\\leq&C\|u _{t}\|^{\f{2(10\alpha-5)}{8\alpha-5}}_{L^{2}}+C\|\Lambda^{\alpha}u \|^{\f{2(10\alpha-5)}{6\alpha-5}}_{L^{2}}+\eta\|\Lambda^{\alpha}u \|_{L^{2}}^{\f{ 4\alpha}{6\alpha-5}}
\|\Lambda^{2\alpha}u \|^{2}_{L^{2}},
\ea
\ee
where the Young inequality was used and $\eta$ will be determined later.\\

(4)
Dotting \eqref{3.7} with $\Lambda^{2 \alpha}u_{t}$ and integrating on $\mathbb{R}^{3}$, we arrive at
\be\ba
\f{1}{2}\f{d}{dt}\|\Lambda^{ \alpha} u _{t}\|^{2}_{L^{2}}+\|\Lambda^{2 \alpha}u _{t}\|_{L^{2}}^{2}=-\langle  u_{t}\cdot\nabla u+u\cdot\nabla u_{t},\Lambda^{2 \alpha}u_{t}\rangle.
\ea
\ee
On account of the H\"older inequality and interpolation inequality \eqref{keyinequality}, we have
\be\ba\label{3.17}
&\B|\langle u_{t}\cdot\nabla u+u\cdot\nabla u_{t}, \Lambda^{2 \alpha}u_{t} \rangle\B|\\
\leq& \|u_{t}\|_{L^{\f{6}{3-2\alpha}}} \|\nabla u \|_{L^{\f{3}{\alpha}}} \|\Lambda^{2 \alpha}u_{t}\|_{L^{2}}+\|u\|_{L^{\f{6}{3-2\alpha}}} \|\nabla u_{t} \|_{L^{\f{3}{\alpha}}}\|\Lambda^{2 \alpha}u_{t}\|_{L^{2}}\\
\leq& \|\Lambda^{\alpha}u_{t}\|_{L^{2}} \|\Lambda^{\alpha}u \|_{L^{2}}^{\f{6\alpha-5}{ 2\alpha}}
\|\Lambda^{2\alpha}u \|^{\f{5-4\alpha}{ 2\alpha}}_{L^{2}} \|\Lambda^{2 \alpha}u_{t}\|_{L^{2}}\\&+\|\Lambda^{\alpha}u\|_{L^{2}} \|\Lambda^{\alpha}u_{t} \|_{L^{2}}^{\f{6\alpha-5}{ 2\alpha}}
\|\Lambda^{2\alpha}u_{t} \|^{\f{5-4\alpha}{ 2\alpha}}_{L^{2}} \|\Lambda^{2 \alpha}u_{t}\|_{L^{2}}.
\ea
\ee
Making use of the Young inequality, we further see that
\be\ba\label{3.15}
&\|\Lambda^{\alpha}u_{t}\|_{L^{2}} \|\Lambda^{\alpha}u \|_{L^{2}}^{\f{6\alpha-5}{ 2\alpha}}
\|\Lambda^{2\alpha}u \|^{\f{5-4\alpha}{ 2\alpha}}_{L^{2}} \|\Lambda^{2 \alpha}u_{t}\|_{L^{2}}\\
\leq&  C\|\Lambda^{\alpha}u_{t}\|_{L^{2}}^{\f{2(12\alpha-5)}{10\alpha-5}}
+C\|\Lambda^{\alpha}u \|_{L^{2}}^{\f{2(12\alpha-5)}{6\alpha-5}}
+\eta\|\Lambda^{ \alpha}u \|^{\f{8\alpha}{ 6\alpha-5}}_{L^{2}} \|\Lambda^{2 \alpha}u \|_{L^{2}}^{2}+\varepsilon\|\Lambda^{2 \alpha}u_{t}\|_{L^{2}}^{2}\ea
\ee
and
\be\ba\label{3.16}
&\|\Lambda^{\alpha}u\|_{L^{2}} \|\Lambda^{\alpha}u_{t} \|_{L^{2}}^{\f{6\alpha-5}{ 2\alpha}}
\|\Lambda^{2\alpha}u_{t} \|^{\f{5-4\alpha}{ 2\alpha}}_{L^{2}} \|\Lambda^{2 \alpha}u_{t}\|_{L^{2}}\\
\leq&\|\Lambda^{\alpha}u_{t} \|_{L^{2}}^{2}\|\Lambda^{\alpha}u\|_{L^{2}}^{\f{4\alpha}{6-5\alpha}} +\varepsilon\|\Lambda^{2 \alpha}u_{t}\|_{L^{2}}^{2}
\\
\leq & C\|\Lambda^{\alpha}u_{t}\|_{L^{2}}^{\f{2(12\alpha-5)}{10\alpha-5}}
+C\|\Lambda^{\alpha}u \|_{L^{2}}^{\f{2(12\alpha-5)}{6\alpha-5}}+\varepsilon\|\Lambda^{2 \alpha}u_{t}\|_{L^{2}}^{2},
\ea
\ee
where $\eta$ and $\varepsilon$ will be chosen later.

Putting the above estimates    together and choosing $\varepsilon$ sufficiently small, we remark that
\be\ba\label{3.19v1}
 \f12\f{d}{dt}\|\Lambda^{ \alpha} u _{t}\|^{2}_{L^{2}}+\f{3}{4}\|\Lambda^{2 \alpha}u _{t}\|_{L^{2}}^{2}
&\leq  C\|\Lambda^{\alpha}u_{t}\|_{L^{2}}^{\f{2(12\alpha-5)}{10\alpha-5}}
+C\|\Lambda^{\alpha}u \|_{L^{2}}^{\f{2(12\alpha-5)}{6\alpha-5}}
+2\eta\|\Lambda^{ \alpha}u \|^{\f{8\alpha}{ 6\alpha-5}}_{L^{2}} \|\Lambda^{2 \alpha}u \|_{L^{2}}^{2}.
\ea
\ee
Multiplying both sides \eqref{3.7} of
by $u_{tt}$, we know that
\be\label{3.20}
\f12\f{d}{dt}\|\Lambda^{\alpha}u _{t}\|^{2}_{L^{2}}
+ \| u _{tt}\|_{L^{2}}^{2}
=\langle u_{t}\cdot\nabla u+u\cdot\nabla u_{t},u_{tt}\rangle.
\ee
By the same taken as \eqref{3.17}, we see that
$$\ba
&|\langle u_{t}\cdot\nabla u+u\cdot\nabla u_{t}, u_{tt}\rangle|
\\\leq& \|u_{t}\|_{L^{\f{6}{3-2\alpha}}} \|\nabla u \|_{L^{\f{3}{\alpha}}} \|u_{tt}\|_{L^{2}}+\|u\|_{L^{\f{6}{3-2\alpha}}} \|\nabla u_{t} \|_{L^{\f{3}{\alpha}}}\|u_{tt}\|_{L^{2}}\\
\leq& \|\Lambda^{\alpha}u_{t}\|_{L^{2}} \|\Lambda^{\alpha}u \|_{L^{2}}^{\f{6\alpha-5}{ 2\alpha}}
\|\Lambda^{2\alpha}u \|^{\f{5-4\alpha}{ 2\alpha}}_{L^{2}} \|u_{tt}\|_{L^{2}}+\|\Lambda^{\alpha}u\|_{L^{2}} \|\Lambda^{\alpha}u_{t} \|_{L^{2}}^{\f{6\alpha-5}{ 2\alpha}}
\|\Lambda^{2\alpha}u_{t} \|^{\f{5-4\alpha}{ 2\alpha}}_{L^{2}} \|u_{tt}\|_{L^{2}}.\ea
$$
 Replacing $\|\Lambda^{2 \alpha}u_{t}\|_{L^{2}}$ by $u_{tt}$ in \eqref{3.15} and \eqref{3.16}, we discover that
$$\ba
&\|\Lambda^{\alpha}u_{t}\|_{L^{2}} \|\Lambda^{\alpha}u \|_{L^{2}}^{\f{6\alpha-5}{ 2\alpha}}
\|\Lambda^{2\alpha}u \|^{\f{5-4\alpha}{ 2\alpha}}_{L^{2}} \|  u_{tt}\|_{L^{2}}\\
\leq&  C\|\Lambda^{\alpha}u_{t}\|_{L^{2}}^{\f{2(12\alpha-5)}{10\alpha-5}}
+C\|\Lambda^{\alpha}u \|_{L^{2}}^{\f{2(12\alpha-5)}{6\alpha-5}}
+\eta\|\Lambda^{ \alpha}u \|^{\f{8\alpha}{ 6\alpha-5}}_{L^{2}} \|\Lambda^{2 \alpha}u \|_{L^{2}}^{2}+\varepsilon\| u_{tt}\|_{L^{2}}^{2}\ea
$$
and
$$\ba
&\|\Lambda^{\alpha}u\|_{L^{2}} \|\Lambda^{\alpha}u_{t} \|_{L^{2}}^{\f{6\alpha-5}{ 2\alpha}}
\|\Lambda^{2\alpha}u_{t} \|^{\f{5-4\alpha}{ 2\alpha}}_{L^{2}} \|  u_{tt}\|_{L^{2}}\\
 \leq & C\|\Lambda^{\alpha}u_{t}\|_{L^{2}}^{\f{2(12\alpha-5)}{10\alpha-5}}
+C\|\Lambda^{\alpha}u \|_{L^{2}}^{\f{2(12\alpha-5)}{6\alpha-5}}+\varepsilon\| u_{tt}\|_{L^{2}}^{2}+\delta\| \Lambda^{2\alpha}u_{t }\|_{L^{2}}^{2},
\ea
$$
from which it follows that
$$\ba
&|\langle u_{t}\cdot\nabla u+u\cdot\nabla u_{t}, u_{tt}\rangle|
\\\leq&  C\|\Lambda^{\alpha}u_{t}\|_{L^{2}}^{\f{2(12\alpha-5)}{10\alpha-5}}
+C\|\Lambda^{\alpha}u \|_{L^{2}}^{\f{2(12\alpha-5)}{6\alpha-5}}
+\eta\|\Lambda^{ \alpha}u \|^{\f{8\alpha}{ 6\alpha-5}}_{L^{2}} \|\Lambda^{2 \alpha}u \|_{L^{2}}^{2}+2\varepsilon\| u_{tt}\|_{L^{2}}^{2}+\delta\| \Lambda^{2\alpha}u_{t }\|_{L^{2}}^{2}.
\ea
$$
where $\varepsilon$ and $\eta$ will
be specified below.

Substituting this into \eqref{3.20} and taking $\varepsilon$ sufficiently small, we see that
\be\ba\label{3.21v1}
 &\f12\f{d}{dt}\|\Lambda^{\alpha}u _{t}\|^{2}_{L^{2}}
+ \f12\| u _{tt}\|_{L^{2}}^{2}\\
\leq&  C\|\Lambda^{\alpha}u_{t}\|_{L^{2}}^{\f{2(12\alpha-5)}{10\alpha-5}}
+C\|\Lambda^{\alpha}u \|_{L^{2}}^{\f{2(12\alpha-5)}{6\alpha-5}}
+2\eta\|\Lambda^{ \alpha}u \|^{\f{8\alpha}{ 6\alpha-5}}_{L^{2}} \|\Lambda^{2 \alpha}u \|_{L^{2}}^{2}+\delta\| \Lambda^{2\alpha}u_{t }\|_{L^{2}}^{2}.
\ea
\ee
Combining \eqref{3.21v1} and \eqref{3.19v1},
taking $\delta$ small and performing some elementary estimates, we get
\be\ba\label{3.20v2}
 &\f{d}{dt}\|\Lambda^{ \alpha} u _{t}\|^{2}_{L^{2}}+\f{1}{2}\|\Lambda^{2 \alpha}u _{t}\|_{L^{2}}^{2}+ \f12\| u _{tt}\|_{L^{2}}^{2}
\\
\leq  & C\|\Lambda^{\alpha}u_{t}\|_{L^{2}}^{\f{2(12\alpha-5)}{10\alpha-5}}
+C\|\Lambda^{\alpha}u \|_{L^{2}}^{\f{2(12\alpha-5)}{6\alpha-5}}
+4\eta\|\Lambda^{ \alpha}u \|^{\f{8\alpha}{ 6\alpha-5}}_{L^{2}} \|\Lambda^{2 \alpha}u \|_{L^{2}}^{2}.
\ea
\ee
The proof of this lemma is completed.

\end{proof}

\begin{proof}[Proof of Remark \ref{remark3.1}]
\underline{$Bound$ $for$ $\|\Lambda^{2\alpha}u\|_{L^{2}}^{2}\in L^{\f{  6\alpha-5}{8\alpha-5}}.$} The basic energy estimate of the Navier-Stokes equations ensures that \be\label{3.42}
 \|\Lambda^{\alpha}u  \|_{L^{2}}^{2}\in L^1.\ee
It is a position to apply Lemma \ref{hofflemma}  to  \eqref{3.2v1} to get $\|\Lambda^{2\alpha}u (t)\|_{L^{2}}^{2}\in L^{\f{  6\alpha-5}{8\alpha-5}}(0, T).$

\underline{$Bound$ $for$ $   u _{t}\in L^{\f{2( 6\alpha-5)}{8\alpha-5}}L^{2}$.}
By virtue of Lemma \ref{hofflemma} and \eqref{3.6v1}, we conclude by $ \|\Lambda^{\alpha}u (t)\|^{2}_{L^{2}}\in L^{1}$ that $\Lambda^{2\alpha}u,   u _{t}\in L^{\f{2( 6\alpha-5)}{8\alpha-5}}L^{2}$.

\underline{$Bound$ $for$ $\|\Lambda^{ \alpha}u _{t}\|_{L^{2}}\in L^{\f{2(6\alpha-5)}{ 10\alpha-5}}$.}
To deal with the last term on the right hand side of the inequality \eqref{3.13v1}, we deduce from  \eqref{3.2} that
$$
 \f{6\alpha-5}{  8\alpha-5 } \f{d}{dt}\|\Lambda^{\alpha}u (t)\|^{\f{2(8\alpha-5)}{6\alpha-5}}_{L^{2}}
+ \|\Lambda^{\alpha}u \|^{\f{4\alpha }{6\alpha-5}}\|\Lambda^{2\alpha}u (t)\|_{L^{2}}^{2}
\leq C
\|\Lambda^{\alpha}u (t)\|_{L^{2}}^{\f{2(10\alpha-5)}{6\alpha-5}}.
$$
A combination of this and \eqref{3.10}, we end up with
\be\ba
&\f{d}{dt}\B[ \| u _{t}\|^{2}_{L^{2}}+ \f{6\alpha-5}{  8\alpha-5 }\|\Lambda^{\alpha}u (t)\|^{\f{2(8\alpha-5)}{6\alpha-5}}_{L^{2}}\B]+\f12\|\Lambda^{ \alpha}u _{t}\|_{L^{2}}^{2}  +\|\Lambda^{\alpha}u \|^{\f{4\alpha }{6\alpha-5}}\|\Lambda^{2\alpha}u (t)\|_{L^{2}}^{2}\\
\leq&  C \|u _{t}\|^{\f{2(10\alpha-5)}{8\alpha-5}}_{L^{2}}+C\|\Lambda^{\alpha}u \|^{\f{2(10\alpha-5)}{6\alpha-5}}_{L^{2}}+2\eta\|\Lambda^{\alpha}u \|_{L^{2}}^{\f{ 4\alpha}{6\alpha-5}}
\|\Lambda^{2\alpha}u \|^{2}_{L^{2}}.
\ea
\ee
By choosing $\eta$ sufficiently small, we get
\be\ba\label{3.24v1}
&\f{d}{dt}\B[ \| u _{t}\|^{2}_{L^{2}}+\f{6\alpha-5}{  8\alpha-5 }\|\Lambda^{\alpha}u (t)\|^{\f{2(8\alpha-5)}{6\alpha-5}}_{L^{2}}\B]+\f12\|\Lambda^{ \alpha}u _{t}\|_{L^{2}}^{2}    + \f12\|\Lambda^{\alpha}u \|^{\f{4\alpha }{6\alpha-5}}_{L^{2}}\|\Lambda^{2\alpha}u \|_{L^{2}}^{2} \\
\leq&  C \B[ \|u _{t}\|^{2}_{L^{2}}+\f{6\alpha-5}{  8\alpha-5 }\|\Lambda^{\alpha}u \| ^{\f{2(8\alpha-5)}{6\alpha-5}}_{L^{2}}\B]^{\f{ (10\alpha-5)}{8\alpha-5}}.
\ea
\ee
At this stage, it is enough to apply Lemma \ref{hofflemma}
to get
$\|\Lambda^{ \alpha}u _{t}\|_{L^{2}}\in L^{\f{2(6\alpha-5)}{ 10\alpha-5}}$, where $\| u _{t}\|^{2}_{L^{2}}+\|\Lambda^{\alpha}u (t)\|^{\f{2(8\alpha-5)}{6\alpha-5}}_{L^{2}}\in  L^{\f{ ( 6\alpha-5)}{8\alpha-5}}$ was used.

\underline{$Bound$ $for$ $\|\Lambda^{2 \alpha}u _{t}\|_{L^{2}}\in L^{\f{2(6\alpha-5)}{12\alpha-5}}$.}
It follows from
\eqref{3.2} that
\be\label{3.21}
\f{6\alpha-5}{ 10\alpha-5 }\f{d}{dt}\|\Lambda^{\alpha}u (t)\|^{\f{2(10\alpha-5)}{6\alpha-5}}_{L^{2}}
+ \|\Lambda^{ \alpha}u \|^{\f{8\alpha}{ 6\alpha-5}}_{L^{2}}\|\Lambda^{2\alpha}u (t)\|_{L^{2}}^{2}
\leq C
\|\Lambda^{\alpha}u (t)\|_{L^{2}}^{\f{2(12\alpha-5)}{6\alpha-5}}.
\ee
A combination of \eqref{3.20} and \eqref{3.21} yields that
\be\ba\label{3.22}
&\f{d}{dt}\B[ \|\Lambda^{ \alpha} u _{t}\|^{2}_{L^{2}}+\f{6\alpha-5}{ 10\alpha-5 }\|\Lambda^{\alpha}u (t)\|^{\f{2(10\alpha-5)}{6\alpha-5}}_{L^{2}}\B]+\f{1}{2}\|\Lambda^{2 \alpha}u _{t}\|_{L^{2}}^{2}
+\f{1}{2}\|\Lambda^{ \alpha}u \|^{\f{8\alpha}{ 6\alpha-5}}_{L^{2}}\|\Lambda^{2\alpha}u (t)\|_{L^{2}}^{2}\\&\leq  C\|\Lambda^{\alpha}u_{t}\|_{L^{2}}^{\f{2(12\alpha-5)}{10\alpha-5}}
+C\|\Lambda^{\alpha}u \|_{L^{2}}^{\f{2(12\alpha-5)}{6\alpha-5}}.
 \ea
\ee
With this in hand, combining
$\|\Lambda^{ \alpha}u _{t}\|_{L^{2}}\in L^{\f{2(6\alpha-5)}{ 10\alpha-5}}$ and Lemma \ref{hofflemma}, we find
$$
\|\Lambda^{2 \alpha}u _{t}\|_{L^{2}}^{2} \in L^{\f{6\alpha-5}{12\alpha-5}}.
$$

\underline{$Bound$ $for$
$\| u _{tt}\|_{L^{2}}\in L^{\f{2(6\alpha-5)}{12\alpha-5}}$.}
To absorb the last term on the right hand side of  $\eqref{3.21v1}$, we combine
\eqref{3.21v1} and \eqref{3.21} to obtain that
\be\ba\label{3.24}
&\f{d}{dt}\B[ \|\Lambda^{ \alpha} u _{t}\|^{2}_{L^{2}}+\f{6\alpha-5}{ 10\alpha-5 }\|\Lambda^{\alpha}u (t)\|^{\f{2(10\alpha-5)}{6\alpha-5}}_{L^{2}}\B]+\f{1}{2}\|\Lambda^{2 \alpha}u _{t}\|_{L^{2}}^{2}
\\&+\f{1}{2}\|\Lambda^{ \alpha}u \|^{\f{8\alpha}{ 6\alpha-5}}_{L^{2}}\|\Lambda^{2\alpha}u (t)\|_{L^{2}}^{2}+ \f{1}{2}\| u _{tt}\|_{L^{2}}^{2}\\\leq&  C\|\Lambda^{\alpha}u_{t}\|_{L^{2}}^{\f{2(12\alpha-5)}{10\alpha-5}}
+C\|\Lambda^{\alpha}u \|_{L^{2}}^{\f{2(12\alpha-5)}{6\alpha-5}}
\\\leq&  C\B[\|\Lambda^{\alpha}u_{t}\|^{2}_{L^{2}}
+\|\Lambda^{\alpha}u \|_{L^{2}}^{\f{2(10\alpha-5)}{6\alpha-5}} \B]^{\f{ 12\alpha-5}{10\alpha-5}}.
 \ea
\ee
At this stage, by a combination Lemma \ref{hofflemma} and $\|\Lambda^{ \alpha} u _{t}\|^{2}_{L^{2}}+\|\Lambda^{\alpha}u (t)\|^{\f{2(10\alpha-5)}{6\alpha-5}}_{L^{2}}\in    L^{\f{  6\alpha-5}{10\alpha-5}}$,
we have
$$
\|\Lambda^{2 \alpha}u _{t}\|_{L^{2}}^{2}, \| u _{tt}\|^{2}_{L^{2}}\in L^{^{\f{6\alpha-5}{12\alpha-5}}}.$$
We finish the proof of this remark.
\end{proof}

Next, we establish some energy inequalities
involving  the higher order time derivatives.
\begin{lemma}Assume that $u$ is a smooth solution of   the   fractional Navier-Stokes equations \eqref{GNS}. Then for each $t >0$, there hold
\be\ba\label{3.31v1}
 \f{d}{dt}\| u _{t}^{(r)}\|^{2}_{L^{2}}+\f{1}{2}\|\Lambda^{ \alpha}u _{t}^{(r)}\|_{L^{2}}^{2}
\leq&C\sum_{j>0}\|u_{t}^{(j)}\|
^{\f{2(4r\alpha+6\alpha-5)}{4j\alpha+4\alpha-5}}
_{L^{2}}
+C\sum_{j>0}\|\Lambda^{\alpha}u_{t}^{(r-j)} \|_{L^{2}}^{\f{2(4r\alpha+6\alpha-5)}{4(r-j)\alpha+6\alpha-5}}
\\&+2\eta\sum_{j>0}\|\Lambda^{\alpha}u_{t}^{(r-j)} \|_{L^{2}}^{\f{4\alpha(2j-1)}{  4(r-j)\alpha+6\alpha-5}}
\|\Lambda^{2\alpha}u_{t}^{(r-j)} \|^{2}_{L^{2}},
\ea
\ee
and
\be\ba\label{3.38v1}
&\f{d}{dt}\| \Lambda^{\alpha}u _{t}^{(r)}\|^{2}_{L^{2}}+\f{1}{2}\|\Lambda^{2 \alpha}u _{t}^{(r)}\|_{L^{2}}^{2} +\f{1}{2}\| u _{t}^{(r+1)}\|_{L^{2}}^{2}\\\leq& C\|\Lambda^{\alpha}u_{t}^{(r )}  \|_{L^{2}}^{\f{2(4r\alpha+8\alpha-5)}{ 4r\alpha +6\alpha-5}}+C\sum_{j=0}^{r-1}\| \Lambda^{\alpha} u_{t}^{(j)}\|_{L^{2}}^{2\f{4 r\alpha+8\alpha-5}{4j\alpha+6\alpha-5}}
\\&+C\sum_{j>0}\|\Lambda^{\alpha}u_{t}^{(r-j)} \|_{L^{2}}^{\f{2( 4 r\alpha+8\alpha-5)}{4(r-j)\alpha+6\alpha-5}}
+ \eta\sum_{j>0}\|\Lambda^{\alpha}u_{t}^{(r-j)} \|_{L^{2}}^{\f{8\alpha j}{  4(r-j)\alpha+6\alpha-5}}
\|\Lambda^{2\alpha}u_{t}^{(r-j)} \|^{2}_{L^{2}},
\ea
\ee
where  $\eta>0$ is to be chosen later.
\end{lemma}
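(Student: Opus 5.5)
Differentiate \eqref{GNS} $r$ times in $t$ and use the Leibniz rule to get
\[
\partial_t u_t^{(r)}+(-\Delta)^{\alpha}u_t^{(r)}+\sum_{j=0}^{r}\binom{r}{j}\,u_t^{(j)}\cdot\nabla u_t^{(r-j)}+\nabla p_t^{(r)}=0,\qquad \Div u_t^{(r)}=0 .
\]
This is the generalization of \eqref{3.7}. Both estimates then follow the scheme of steps (3) and (4) in the proof of Lemma \ref{lem3.1}.

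\textbf{First estimate \eqref{3.31v1}.} Test the equation with $u_t^{(r)}$. The pressure drops out by incompressibility, and so does the term $j=0$, $u\cdot\nabla u_t^{(r)}$, since $\langle u\cdot\nabla w,w\rangle=0$. For each remaining $j\ge1$, apply Hölder as in \eqref{3.8}:
\[
|\langle u_t^{(j)}\cdot\nabla u_t^{(r-j)},u_t^{(r)}\rangle|\le \|u_t^{(j)}\|_{L^2}\,\|\nabla u_t^{(r-j)}\|_{L^{3/\alpha}}\,\|u_t^{(r)}\|_{L^{\frac{6}{3-2\alpha}}}.
\]
Bound the last factor by $\|\Lambda^\alpha u_t^{(r)}\|_{L^2}$ and absorb it into the dissipation. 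The middle factor is controlled by \eqref{keyinequality} with $s=2$ applied to $f=u_t^{(r-j)}$. This leaves
\[
\|u_t^{(j)}\|_{L^2}^2\,\|\Lambda^\alpha u_t^{(r-j)}\|_{L^2}^{\frac{6\alpha-5}{\alpha}}\,\|\Lambda^{2\alpha}u_t^{(r-j)}\|_{L^2}^{\frac{5-4\alpha}{\alpha}},
\]
which we split by a three-factor Young inequality into the three sums on the right of \eqref{3.31v1}. The exponents are fixed by scaling. Assign $u_t^{(j)}$ in $L^2$ weight $4j\alpha+4\alpha-5$, $\Lambda^\alpha u_t^{(i)}$ weight $4i\alpha+6\alpha-5$, and $\Lambda^{2\alpha}u_t^{(i)}$ weight $4i\alpha+8\alpha-5$ (the dimensions in Theorem \ref{the1.2} and Remark \ref{remark3.1}). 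Choose the Young exponent on the $\Lambda^{2\alpha}$ factor so that it appears squared with coefficient $\eta$, multiplied by the power $\frac{4\alpha(2j-1)}{4(r-j)\alpha+6\alpha-5}$ of $\|\Lambda^\alpha u_t^{(r-j)}\|_{L^2}$. The remaining two factors then receive the exponents $\frac{2(4r\alpha+6\alpha-5)}{\cdot}$ shown, and for $r=1$ this recovers \eqref{3.10}. Checking that the Young weights sum to one is the homogeneity count $2(4r\alpha+6\alpha-5)$ on both sides.

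\textbf{Second estimate \eqref{3.38v1}.} Test the same equation with $\Lambda^{2\alpha}u_t^{(r)}$, and separately with $u_t^{(r+1)}$, then add, as in \eqref{3.19v1}--\eqref{3.20v2}. Now every $j$, including $j=0$, contributes. Hölder gives
\[
\|u_t^{(j)}\|_{L^{\frac{6}{3-2\alpha}}}\,\|\nabla u_t^{(r-j)}\|_{L^{3/\alpha}}\,\|D\|_{L^2},\qquad D\in\{\Lambda^{2\alpha}u_t^{(r)},\,u_t^{(r+1)}\}.
\]
Bound the first factor by $\|\Lambda^\alpha u_t^{(j)}\|_{L^2}$ and the second by \eqref{keyinequality}. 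There are two cases.
\begin{itemize}
\item For $j=0$, the term $\nabla u_t^{(r)}$ carries $\|\Lambda^{2\alpha}u_t^{(r)}\|^{\frac{5-4\alpha}{2\alpha}}$. Handle it as in \eqref{3.16}: absorb it into the dissipation and obtain $C\|\Lambda^\alpha u_t^{(r)}\|^{\frac{2(4r\alpha+8\alpha-5)}{4r\alpha+6\alpha-5}}$, plus a power of $\|\Lambda^\alpha u\|$ that is part of the middle sum.
\item For $j\ge1$, proceed as in \eqref{3.15}. A four-factor Young inequality produces $\varepsilon\|D\|^2$, the $\eta$-term with power $\frac{8\alpha j}{4(r-j)\alpha+6\alpha-5}$ of $\|\Lambda^\alpha u_t^{(r-j)}\|$, and powers of $\|\Lambda^\alpha u_t^{(j)}\|$ and $\|\Lambda^\alpha u_t^{(r-j)}\|$ normalized by the homogeneity $2(4r\alpha+8\alpha-5)$.
\end{itemize}
When testing with $u_t^{(r+1)}$, the $j=0$ term leaves $\delta\|\Lambda^{2\alpha}u_t^{(r)}\|^2$. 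Choosing $\delta$ small absorbs it with the first test, exactly as in \eqref{3.20v2}.

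\textbf{Main obstacle.} The only delicate point is the exponent bookkeeping in the multi-factor Young inequalities. We must check that all Young exponents lie in $(1,\infty)$ for $5/6<\alpha<5/4$ and every $0<j\le r$. This needs $5-4\alpha>0$ and $6\alpha-5>0$. We will verify it via the weight assignment above, not by direct computation.
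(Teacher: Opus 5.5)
\textbf{Verdict: there is a genuine gap.} Your outline follows the paper's proof step for step: the same differentiated equation, the same test functions, the same H\"older triple $L^{2}\times L^{3/\alpha}\times L^{6/(3-2\alpha)}$, \eqref{keyinequality} with $s=2$, then Young. Your treatment of the $j=0$ terms is fine.

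\textbf{Where it fails.} The gap is the step you defer. For $j\ge 1$, the multi-factor Young inequality with the prescribed $\eta$-term does not exist in part of the parameter range. The weight count cannot certify it: homogeneity only guarantees that the reciprocal Young exponents sum to one, not that each is positive. Set $a=\|u_t^{(j)}\|_{L^2}$, $b=\|\Lambda^{\alpha}u_t^{(r-j)}\|_{L^2}$, $c=\|\Lambda^{2\alpha}u_t^{(r-j)}\|_{L^2}$, $\beta=\frac{6\alpha-5}{\alpha}$, $\gamma=\frac{5-4\alpha}{\alpha}$ and $\mu=\frac{4\alpha(2j-1)}{4(r-j)\alpha+6\alpha-5}$. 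Producing $\eta b^{\mu}c^{2}$ forces the factorization $a^{2}b^{\beta}c^{\gamma}=a^{2}\,b^{\beta-\mu\gamma/2}\,(b^{\mu}c^{2})^{\gamma/2}$. The middle exponent has the sign of
\[
(6\alpha-5)\bigl(4(r-j)\alpha+6\alpha-5\bigr)-2\alpha(2j-1)(5-4\alpha).
\]
This equals $-1$ at $\alpha=1$, $r=j=1$, and tends to $-\infty$ along $j=r\to\infty$ for every $\alpha<5/4$.

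When it is negative, the numerical inequality you need is false, not merely unproved. At $\alpha=1$, $r=j=1$ you would need $a^{2}bc\le Ca^{10/3}+Cb^{10}+\eta b^{4}c^{2}$. Taking $(a,b,c)=(1,\epsilon,\epsilon^{-2})$ makes the left side $\epsilon^{-1}$ while the right side stays bounded.

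The second estimate has the same defect. With the power $\frac{8\alpha j}{4(r-j)\alpha+6\alpha-5}$, at $\alpha=1$, $r=j=1$ one needs
$\|\nabla u_t\|^{2}\|\nabla u\|\|\Delta u\|\le C\|\nabla u_t\|^{14/5}+C\|\nabla u\|^{14}+\eta\|\nabla u\|^{8}\|\Delta u\|^{2}$.
This is violated by $(\|\nabla u_t\|,\|\nabla u\|,\|\Delta u\|)=(1,\epsilon,\epsilon^{-4})$.

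\textbf{The paper does not supply the missing step.} Its \eqref{3.10}, \eqref{3.15}, \eqref{3.31} and \eqref{3.34} perform exactly this splitting and break down at the same parameters. What is missing is an extra relation ruling out the regime where $b$ is small and $c$ is large; pure Young cannot provide it.

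\textbf{A possible repair.} The natural source is the equation itself, or a different H\"older split. For instance, take $u_t+\Lambda^{2\alpha}u+P(u\cdot\nabla u)=0$, with $P$ the Leray projector, and apply the same H\"older and interpolation bounds. This gives $\|\Lambda^{2\alpha}u\|_{L^2}\le 2\|u_t\|_{L^2}+C\|\Lambda^{\alpha}u\|_{L^2}^{\frac{8\alpha-5}{6\alpha-5}}$. It turns the $r=1$ term into $C\|u_t\|_{L^2}^{2+\gamma}\|\Lambda^\alpha u\|_{L^2}^{\beta}+C\|u_t\|_{L^2}^{2}\|\Lambda^{\alpha}u\|_{L^2}^{\beta+\gamma\frac{8\alpha-5}{6\alpha-5}}$. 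Both terms have positive exponents and the right homogeneity, so a two-factor Young legitimately yields the first two terms of \eqref{3.31v1}.

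For general $r$ such a replacement has to be built and checked inductively. The same holds for \eqref{3.38v1}, where this particular substitution does not fit directly because its right-hand side has no $\|u_t^{(l)}\|_{L^2}$ terms. As it stands, your proposal does not establish the lemma.
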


\begin{proof}
(1)
We differentiate  the  fractional Navier-Stokes equations
\eqref{GNS} with respect to $t$ to obtain
\be\ba\label{3.26} u^{(r+1) }_{t}+\nu(-\Delta)^{\alpha}u^{(r)}_{t}+\sum_{j=0}^{r}\binom{r}{j}u_{t}^{(j)}\cdot\nabla u^{(r-j)}_{t} +\nabla p^{(r)}_{t}=0.
\ea
\ee
Taking the $L^{2}$  inner product of \eqref{3.26} with $u _{t}^{(r)}$, we find
\be\ba\label{3.27}
\f{1}{2}\f{d}{dt}\| u _{t}^{(r)}\|^{2}_{L^{2}}+\|\Lambda^{ \alpha}u _{t}^{(r)}\|_{L^{2}}^{2}=\sum_{j=0}^{r}\binom{r}{j}\langle u_{t}^{(j)}\cdot\nabla u^{(r-j)}_{t}, u _{t}^{(r)}\rangle.
\ea
\ee
Making full use of divergence free condition, we get
$$\langle u \cdot\nabla u^{(r )}_{t}, u _{t}^{(r)}\rangle=0.
$$
Hence, we rewrite \eqref{3.27} as
\be\ba\label{3.29}
\f{1}{2}\f{d}{dt}\| u _{t}^{(r)}\|^{2}_{L^{2}}+\|\Lambda^{ \alpha}u _{t}^{(r)}\|_{L^{2}}^{2}=\sum_{j>0}^{r}\binom{r}{j}\langle u_{t}^{(j)}\cdot\nabla u_{t}^{(r-j)}, u _{t}^{(r)}\rangle.
\ea
\ee
Following the same path of \eqref{3.17}, we conclude by the Young inequality that
\be\ba\label{3.30}
\langle u_{t}^{(j)}\cdot\nabla u_{t}^{(r-j)}, u _{t}^{(r)}\rangle
\leq& \|u_{t}^{(j)}\|_{L^{2}} \|\nabla u^{(r-j)}_{t}\|_{L^{\f{3}{\alpha}}}\| u _{t}^{(r)}\|_{L^{\f{6}{3-2\alpha}}}\\
\leq&\|u_{t}^{(j)}\|_{L^{2}}\|\Lambda^{\alpha}u_{t}^{(r-j)} \|_{L^{2}}^{\f{6\alpha-5}{ 2\alpha}}
\|\Lambda^{2\alpha}u_{t}^{(r-j)} \|^{\f{5-4\alpha}{ 2\alpha}}_{L^{2}}    \|\Lambda^{\alpha} u _{t}^{(r)}\|_{L^{2}} \\
\leq&C\|u_{t}^{(j)}\|^{2}_{L^{2}}\|\Lambda^{\alpha}u_{t}^{(r-j)} \|_{L^{2}}^{\f{6\alpha-5}{  \alpha}}
\|\Lambda^{2\alpha}u_{t}^{(r-j)} \|^{\f{5-4\alpha}{ \alpha}}_{L^{2}}  +\varepsilon  \|\Lambda^{\alpha} u _{t}^{(r)}\|_{L^{2}}^{2},\ea
\ee
where $\varepsilon$ satisfy $
2\sum\limits_{j>0}^{r}\binom{r}{j}\varepsilon<1/2$.

Inserting \eqref{3.30} into \eqref{3.29} and using the Young inequality once again, we discover that
 \be\ba\label{3.31}
 \f{d}{dt}\| u _{t}^{(r)}\|^{2}_{L^{2}}+\f{1}{2}\|\Lambda^{ \alpha}u _{t}^{(r)}\|_{L^{2}}^{2}
\leq&C\sum_{j>0}\|u_{t}^{(j)}\|
^{\f{2(4r\alpha+6\alpha-5)}{4j\alpha+4\alpha-5}}
_{L^{2}}
+C\sum_{j>0}\|\Lambda^{\alpha}u_{t}^{(r-j)} \|_{L^{2}}^{\f{2(4r\alpha+6\alpha-5)}{4(r-j)\alpha+6\alpha-5}}
\\&+2\eta\sum_{j>0}\|\Lambda^{\alpha}u_{t}^{(r-j)} \|_{L^{2}}^{\f{4\alpha(2j-1)}{  4(r-j)\alpha+6\alpha-5}}
\|\Lambda^{2\alpha}u_{t}^{(r-j)} \|^{2}_{L^{2}},
\ea
\ee
where $\eta$ will be fixed later.  \\

(2) It remains to show \eqref{3.38v1}. To do this,
dotting \eqref{3.26} by
$\Lambda^{2\alpha}u _{t}^{(r)}$ and integrating, we see that
\be\ba\label{3.32}
\f{1}{2}\f{d}{dt}\| \Lambda^{\alpha}u _{t}^{(r)}\|^{2}_{L^{2}}+\|\Lambda^{2 \alpha}u _{t}^{(r)}\|_{L^{2}}^{2}=\sum_{j=0}^{r}\binom{r}{j}\int u_{t}^{(j)}\cdot\nabla u^{(r-j)}_{t}\cdot \Lambda^{2\alpha}u _{t}^{(r)}dx.
\ea
\ee
Along the line of \eqref{3.17}, we arrive at
\be\ba
&\sum_{j=0}|\langle u_{t}^{(j)}\cdot\nabla u^{(r-j)}_{t}\cdot \Lambda^{2\alpha}u _{t}^{(r)}\rangle|\\
\leq& \sum_{j=0}\| u_{t}^{(j)}\|_{L^{\f{6}{3-2\alpha}}}\|\nabla u^{(r-j)}_{t}\|_{L^{\f{3}{\alpha}}}\|\Lambda^{2\alpha}u _{t}^{(r)}\|_{L^{2}}\\
\leq& \sum_{j=0}\| \Lambda^{\alpha} u_{t}^{(j)}\|_{L^{2}}\|\Lambda^{\alpha}u_{t}^{(r-j)} \|_{L^{2}}^{\f{6\alpha-5}{ 2\alpha}}
\|\Lambda^{2\alpha}u_{t}^{(r-j)} \|^{\f{5-4\alpha}{ 2\alpha}}_{L^{2}}\|\Lambda^{2\alpha}u _{t}^{(r)}\|_{L^{2}}\\
\leq& \sum_{j>0}\| \Lambda^{\alpha} u_{t}^{(j)}\|_{L^{2}}^{2}\|\Lambda^{\alpha}u_{t}^{(r-j)} \|_{L^{2}}^{\f{6\alpha-5}{  \alpha}}
\|\Lambda^{2 \alpha}u_{t}^{(r-j)} \|^{\f{5-4\alpha}{ \alpha}}_{L^{2}}+\f\delta2\|\Lambda^{2\alpha}u _{t}^{(r)}\|_{L^{2}}^{2}\\&+\| \Lambda^{\alpha} u \|_{L^{2}}\|\Lambda^{\alpha}u_{t}^{(r )} \|_{L^{2}}^{\f{6\alpha-5}{ 2\alpha}}
\|\Lambda^{2\alpha}u_{t}^{(r )} \|^{\f{5-4\alpha}{ 2\alpha}}_{L^{2}}\|\Lambda^{2\alpha}u _{t}^{(r)}\|_{L^{2}}\\
\leq& \sum_{j>0}\| \Lambda^{\alpha} u_{t}^{(j)}\|_{L^{2}}^{2}\|\Lambda^{\alpha}u_{t}^{(r-j)} \|_{L^{2}}^{\f{6\alpha-5}{  \alpha}}
\|\Lambda^{ 2\alpha}u_{t}^{(r-j)} \|^{\f{5-4\alpha}{ \alpha}}_{L^{2}}+\delta\|\Lambda^{2\alpha}u _{t}^{(r)}\|_{L^{2}}^{2}\\&+C\B[\| \Lambda^{\alpha} u \|_{L^{2}}\|\Lambda^{\alpha}u_{t}^{(r )} \|_{L^{2}}^{\f{6\alpha-5}{ 2\alpha}}\B]^{\f{4\alpha}{6\alpha-5}}.
 \ea
\ee
where we will choose $\delta$ such that  $2\delta<1/2.$

The Young inequality implies that
\be\ba
  \B[\| \Lambda^{\alpha} u \|_{L^{2}}\|\Lambda^{\alpha}u_{t}^{(r )} \|_{L^{2}}^{\f{6\alpha-5}{ 2\alpha}} \B]^{\f{4\alpha}{6\alpha-5}}\leq C\|\Lambda^{\alpha}u  \|_{L^{2}}^{\f{2(4r\alpha+8\alpha-5)}{  6\alpha-5}}+C\|\Lambda^{\alpha}u_{t}^{(r )}  \|_{L^{2}}^{\f{2(4r\alpha+8\alpha-5)}{ 4r\alpha +6\alpha-5}}
\ea
\ee
and
\be\ba\label{3.34}
&\sum_{j>0}\| \Lambda^{\alpha} u_{t}^{(j)}\|_{L^{2}}^{2}\|\Lambda^{\alpha}u_{t}^{(r-j)} \|_{L^{2}}^{\f{6\alpha-5}{ 2\alpha}}
\|\Lambda^{ 2\alpha}u_{t}^{(r-j)} \|^{\f{5-4\alpha}{ \alpha}}_{L^{2}}\\
\leq&C\| \Lambda^{\alpha} u_{t}^{(j)}\|_{L^{2}}^{\f{2 (4 r\alpha+8\alpha-5)}{4j\alpha+6\alpha-5}}
+C\|\Lambda^{\alpha}u_{t}^{(r-j)} \|_{L^{2}}^{\f{2(4 r\alpha+8\alpha-5)}{4(r-j)\alpha+6\alpha-5}}
\\&+\eta\sum_{j>0}\|\Lambda^{\alpha}u_{t}^{(r-j)} \|_{L^{2}}^{\f{8\alpha j}{  4(r-j)\alpha+6\alpha-5}}
\|\Lambda^{2\alpha}u_{t}^{(r-j)} \|^{2}_{L^{2}},
 \ea
\ee
where $\eta$ will be determined later.

From \eqref{3.32}-\eqref{3.34}, we end up with
\be\ba\label{3.35}
&\f{d}{dt}\| \Lambda^{\alpha}u _{t}^{(r)}\|^{2}_{L^{2}}+\f{1}{2}\|\Lambda^{2 \alpha}u _{t}^{(r)}\|_{L^{2}}^{2}\\\leq& C\|\Lambda^{\alpha}u  \|_{L^{2}}^{\f{2(4r\alpha+8\alpha-5)}{  6\alpha-5}}+C\|\Lambda^{\alpha}u_{t}^{(r )}  \|_{L^{2}}^{\f{2(4r\alpha+8\alpha-5)}{ 4r\alpha +6\alpha-5}}+C\| \Lambda^{\alpha} u_{t}^{(j)}\|_{L^{2}}^{2\f{4 r\alpha+8\alpha-5}{4j\alpha+6\alpha-5}}
\\&+C\|\Lambda^{\alpha}u_{t}^{(r-j)} \|_{L^{2}}^{\f{2( 4 r\alpha+8\alpha-5)}{4(r-j)\alpha+6\alpha-5}}
+\eta\sum_{j>0}\|\Lambda^{\alpha}u_{t}^{(r-j)} \|_{L^{2}}^{\f{8\alpha j}{  4(r-j)\alpha+6\alpha-5}}
\|\Lambda^{2\alpha}u_{t}^{(r-j)} \|^{2}_{L^{2}}.
\ea
\ee
We multiply equations
\eqref{3.26} by  $u _{t}^{(r+1)}$ and integrate to
obtain
\be\ba
 \f{1}{2}\f{d}{dt}\| \Lambda^{\alpha}u _{t}^{(r)}\|^{2}_{L^{2}}+\| u _{t}^{(r+1)}\|_{L^{2}}^{2}=&\sum_{j=0}^{r}\binom{r}{j}\int u_{t}^{(j)}\cdot\nabla u^{(r-j)}_{t}\cdot u _{t}^{(r+1)}dx\\
\leq& C\sum_{j=0}\| u_{t}^{(j)}\|_{L^{\f{6}{3-2\alpha}}}\|\nabla u^{(r-j)}_{t}\|_{L^{\f{3}{\alpha}}}\| u _{t}^{(r+1)}\|_{L^{2}}\\
\leq& C\sum_{j=0}\| \Lambda^{\alpha} u_{t}^{(j)}\|_{L^{2}}\|\Lambda^{\alpha}u_{t}^{(r-j)} \|_{L^{2}}^{\f{6\alpha-5}{ 2\alpha}}
\|\Lambda^{2\alpha}u_{t}^{(r-j)} \|^{\f{5-4\alpha}{ 2\alpha}}_{L^{2}} \| u _{t}^{(r+1)}\|_{L^{2}}.
\ea
\ee
The Young inequality ensures that
\be\ba& \f{d}{dt}\| \Lambda^{\alpha}u _{t}^{(r)}\|^{2}_{L^{2}}+\| u _{t}^{(r+1)}\|_{L^{2}}^{2}\\
\leq& C\sum_{j>0}\| \Lambda^{\alpha} u_{t}^{(j)}\|_{L^{2}}^{2}\|\Lambda^{\alpha}u_{t}^{(r-j)} \|_{L^{2}}^{\f{6\alpha-5}{ 2\alpha}}
\|\Lambda^{ 2\alpha}u_{t}^{(r-j)} \|^{\f{5-4\alpha}{ \alpha}}_{L^{2}}+2\varepsilon\| u _{t}^{(r+1)}\|_{L^{2}}^{2}\\&+2\| \Lambda^{\alpha} u \|_{L^{2}}\|\Lambda^{\alpha}u_{t}^{(r )} \|_{L^{2}}^{\f{6\alpha-5}{ 2\alpha}}
\|\Lambda^{2\alpha}u_{t}^{(r )} \|^{\f{5-4\alpha}{ 2\alpha}}_{L^{2}}\| u _{t}^{(r+1)}\|_{L^{2}}\\
\leq& C\sum_{j>0}\| \Lambda^{\alpha} u_{t}^{(j)}\|_{L^{2}}^{2}\|\Lambda^{\alpha}u_{t}^{(r-j)} \|_{L^{2}}^{\f{6\alpha-5}{ 2\alpha}}
\|\Lambda^{ 2\alpha}u_{t}^{(r-j)} \|^{\f{5-4\alpha}{ \alpha}}_{L^{2}}\\&+ \delta\|\Lambda^{2\alpha}u _{t}^{(r)}\|_{L^{2}}^{2}+2\varepsilon\| u _{t}^{(r+1)}\|_{L^{2}}^{2}+C\B[\| \Lambda^{\alpha} u \|_{L^{2}}\|\Lambda^{\alpha}u_{t}^{(r )} \|_{L^{2}}^{\f{6\alpha-5}{ 2\alpha}}\B]^{\f{4\alpha}{6\alpha-5}},
\ea
\ee
where $\delta$ and $\varepsilon$ are to be determined later.

As a consequence, we choose $\varepsilon$ sufficient small to get
\be\ba\label{3.37}
&\f{d}{dt}\| \Lambda^{\alpha}u _{t}^{(r)}\|^{2}_{L^{2}}+\f{1}{2}\| u _{t}^{(r+1)}\|_{L^{2}}^{2}\\\leq& C\|\Lambda^{\alpha}u  \|_{L^{2}}^{\f{2(4r\alpha+8\alpha-5)}{  6\alpha-5}}+C\|\Lambda^{\alpha}u_{t}^{(r )}  \|_{L^{2}}^{\f{2(4r\alpha+8\alpha-5)}{ 4r\alpha +6\alpha-5}}+C\| \Lambda^{\alpha} u_{t}^{(j)}\|_{L^{2}}^{2\f{4 r\alpha+8\alpha-5}{4j\alpha+6\alpha-5}}
\\&+C\|\Lambda^{\alpha}u_{t}^{(r-j)} \|_{L^{2}}^{\f{2( 4 r\alpha+8\alpha-5)}{4(r-j)\alpha+6\alpha-5}}
+\eta\sum_{j>0}\|\Lambda^{\alpha}u_{t}^{(r-j)} \|_{L^{2}}^{\f{8\alpha j}{  4(r-j)\alpha+6\alpha-5}}
\|\Lambda^{2\alpha}u_{t}^{(r-j)} \|^{2}_{L^{2}}+\delta\|\Lambda^{2\alpha}u _{t}^{(r)}\|_{L^{2}}^{2},
\ea
\ee
where the Young inequality was used and $\eta$ will be fixed later.\\
Combining \eqref{3.37} and
\eqref{3.35}, we discover that
\be\ba\label{3.38}
&\f{d}{dt}\| \Lambda^{\alpha}u _{t}^{(r)}\|^{2}_{L^{2}}+\f{1}{2}\|\Lambda^{2 \alpha}u _{t}^{(r)}\|_{L^{2}}^{2} +\f{1}{2}\| u _{t}^{(r+1)}\|_{L^{2}}^{2}\\\leq& C\|\Lambda^{\alpha}u  \|_{L^{2}}^{\f{2(4r\alpha+8\alpha-5)}{  6\alpha-5}}+C\|\Lambda^{\alpha}u_{t}^{(r )}  \|_{L^{2}}^{\f{2(4r\alpha+8\alpha-5)}{ 4r\alpha +6\alpha-5}}+C\sum_{j>0}\| \Lambda^{\alpha} u_{t}^{(j)}\|_{L^{2}}^{2\f{4 r\alpha+8\alpha-5}{4j\alpha+6\alpha-5}}
\\&+C\sum_{j>0}\|\Lambda^{\alpha}u_{t}^{(r-j)} \|_{L^{2}}^{\f{2( 4 r\alpha+8\alpha-5)}{4(r-j)\alpha+6\alpha-5}}
+ \eta\sum_{j>0}\|\Lambda^{\alpha}u_{t}^{(r-j)} \|_{L^{2}}^{\f{8\alpha j}{  4(r-j)\alpha+6\alpha-5}}
\|\Lambda^{2\alpha}u_{t}^{(r-j)} \|^{2}_{L^{2}}.
\ea
\ee
This proves the lemma.
\end{proof}
It is a position to show Theorem \ref{the1.2}.
\begin{proof}[Proof of Theorem \ref{the1.2}]
We assert that the following fact is valid for any $r=k\in \mathbb{N}$,
\be\label{3.40}
\left\{\ba
&F_{r,0}\in L^{\f{6\alpha-5}{4r\alpha+4\alpha-5}},\\
&\f{d}{dt}F_{r,0}+G_{r,0}\leq CF^{\f{4r\alpha+6 \alpha-5}{4r\alpha+4 \alpha-5}}_{r,0},\\
&G_{r,0}\in L^{\f{6\alpha-5}{4r\alpha+6 \alpha-5}}, \|\Lambda^{\alpha}u^{( r) }_{t}\|_{L^{2}}^{2}\in L^{\f{6\alpha-5}{4r\alpha+6 \alpha-5}},\\
&F_{r,1}\in L^{\f{6\alpha-5}{4r\alpha+6\alpha-5}},\\
&\f{d}{dt}F_{r,1}+G_{r,1}\leq CF^{\f{4r\alpha+7 \alpha-5}{4r\alpha+6 \alpha-5}}_{r,1},\\
&G_{r,1}\in L^{\f{6\alpha-5}{4r\alpha+8 \alpha-5}}, \|\Lambda^{2\alpha}u^{( r) }_{t}\|_{L^{2}}^{2},\| u^{( r+1) }_{t}\|_{L^{2}}^{2}\in L^{\f{6\alpha-5}{4r\alpha+4 \alpha-5}},\ea\right.\ee
where
\be\ba\label{3.41}
&F_{r,0}=\|u_{t}^{(r)}\|_{L^{2}}^{2}+\sum_{j=1}^{r-1}\f{4j\alpha+4 \alpha-5}{4r\alpha+4 \alpha-5}F_{j,0}^{\f{4r\alpha+4 \alpha-5}{4j\alpha+4 \alpha-5}}+\sum_{j=0}^{r-1}\f{4j\alpha+6 \alpha-5}{4r\alpha+4 \alpha-5}F_{j,1}^{\f{4r\alpha+4 \alpha-5}{4j\alpha+6 \alpha-5}},\\
&G_{r,0}=\f12\|\Lambda^{\alpha}u_{t}^{(r)}\|_{L^{2}}^{2}+\f12
\sum_{j=1}^{r-1}F_{j,1}^{\f{4(r-j)\alpha-2\alpha}{4j\alpha+6 \alpha-5}}G_{j,1},\\
&F_{r,1}=\|\Lambda^{\alpha}u_{t}^{(r)}\|_{L^{2}}^{2}+\sum_{j=0}^{r-1}\f{4j\alpha+6 \alpha-5}{4r\alpha+6 \alpha-5}F_{j,1}^{\f{4r\alpha+6 \alpha-5}{4j\alpha+6 \alpha-5}},\\
&G_{r,1}=\f12\|\Lambda^{2\alpha}u_{t}^{(r)}\|_{L^{2}}^{2}+\f12\| u_{t}^{(r+1)}\|_{L^{2}}^{2}+\f12
\sum_{j=0}^{r-1}F_{j,1}^{\f{4(r-j)\alpha-2\alpha}{4j\alpha+6 \alpha-5}}G_{j,1}.
\ea\ee
Roughly speaking,
inequalities $\eqref{3.40}_{2}$ and $\eqref{3.40}_{5}$ correspond to energy inequalities \eqref{3.31v1} and \eqref{3.38v1}, respectively. The notations $F_{r,0}, G_{r,0}, F_{r,1}$ and $ G_{r,1}$ are also based on energy inequalities \eqref{3.31v1} and \eqref{3.38v1}.

Next, we will show \eqref{3.40}
for case $r=1$. For the reader's convenience, we have
\be\ba\label{3.39} &F_{0,1}=\|\Lambda^{\alpha}u  \|_{L^{2}}^{2}, \\&G_{0,1}=\f12\|\Lambda^{2\alpha}u  \|_{L^{2}}^{2}+\f12\| u_{t}  \|_{L^{2}}^{2}, \\
&F_{1,0}=\| u_{t}  \|_{L^{2}}^{2}+\f{6\alpha-5}{ 8\alpha-5 }\|\Lambda^{\alpha}u  \|_{L^{2}}^{\f{2(8\alpha-5)}{6\alpha-5}}=\| u_{t}  \|_{L^{2}}^{2}+\f{6\alpha-5}{ 8\alpha-5 }F_{0,1}^{\f{ (8\alpha-5)}{6\alpha-5}},\\
&G_{1,0}=\f12\| \Lambda^{\alpha} u_{t}  \|_{L^{2}}^{2}+\f12F_{0,1}^{\f{2\alpha}{6\alpha-5}}G_{0,1}, \\
&F_{1,1}=\| \Lambda^{\alpha} u_{t}  \|_{L^{2}}^{2}+\f{6\alpha-5}{10\alpha-5}F_{0,1}^{\f{10\alpha-5}{6\alpha-5}},\\
&G_{1,1}=\f12\| \Lambda^{2\alpha} u_{t}  \|_{L^{2}}^{2}+\f12\| u_{tt}  \|_{L^{2}}^{2}+\f12F_{0,1}^{\f{4\alpha }{6\alpha-5}}G_{0,1} +\f12F_{1,0}^{\f{4\alpha }{8\alpha-5}}G_{1,0}.\ea
\ee
We deduce from  \eqref{3.6v1}  in Lemma \ref{lem3.1} that
  \be\label{3.43v1}\f{d}{dt}F_{0,1}+G_{0,1}\leq CF_{0,1}^{\f{ (8\alpha-5)}{6\alpha-5}}.
\ee
In the light of
Lemma \ref{hofflemma}, we conclude by \eqref{3.43v1} and \eqref{3.42} that
$G_{0,1}\in L^{\f{6\alpha-5}{8\alpha-5}}$. Hence, we further have
\be\label{3.43}
F_{ 1,0}=\| u_{t}  \|_{L^{2}}^{2}+\f{6\alpha-5}{ 8\alpha-5 }\|\Lambda^{\alpha}u  \|_{L^{2}}^{\f{2(8\alpha-5)}{6\alpha-5}}\in   L^{\f{ 6\alpha-5 }{ 8\alpha-5}}.
\ee
Invoking the estimate \eqref{3.24v1}, we observe that
\be\label{3.45}
\f{d}{dt}F_{ 1,0}+G_{1,0}\leq CF_{ 1,0}^{\f{ (10\alpha-5)}{8\alpha-5}}.
\ee
  From this and \eqref{3.43}, we derive Lemma \ref{hofflemma} that
 \be\ba
G_{1,0}\in L^{\f{6\alpha-5}{10 \alpha-5}},
\ea
\ee
which turns out
 \be\ba\label{3.48}
\|\Lambda^{\alpha}u  _{t}\|_{L^{2}}^{2}\in L^{\f{6\alpha-5}{10\alpha-5}}.
\ea
\ee
We deduce from
\eqref{3.42}, \eqref{3.43} and \eqref{3.48} that
\be\label{3.49}
F_{1,1}\in L^{\f{6\alpha-5}{10\alpha-5}}.
\ee
By carrying out the necessary computation, we conclude by
\eqref{3.43v1}     that
\be\ba
\f{6\alpha-5}{10\alpha-5}\f{d}{dt}F_{0,1}^{\f{10\alpha-5}{6\alpha-5}}+F_{0,1}^{\f{4\alpha }{6\alpha-5}}G_{0,1} \leq CF_{0,1}^{\f{ (12\alpha-5)}{6\alpha-5}}.
\ea\ee
Combining this and \eqref{3.24v1}, we remark that
\be\ba \label{3.50v1}
&\f{d}{dt}\B[\|\Lambda^{ \alpha} u _{t}\|^{2}_{L^{2}}+\f{6\alpha-5}{10\alpha-5}F_{0,1}^{\f{10\alpha-5}{6\alpha-5}}\B]
\\&+\f{1}{2}\|\Lambda^{2 \alpha}u _{t}\|_{L^{2}}^{2}+\f{1}{2}\| u_{tt} \|_{L^{2}}^{2}
+ F_{0,1}^{\f{4\alpha }{6\alpha-5}}G_{0,1} \\
\leq&  C\|\Lambda^{\alpha}u_{t}\|_{L^{2}}^{\f{2(12\alpha-5)}{10\alpha-5}}
+C\|\Lambda^{\alpha}u \|_{L^{2}}^{\f{2(12\alpha-5)}{6\alpha-5}}
+2\eta\|\Lambda^{ \alpha}u \|^{\f{8\alpha}{ 6\alpha-5}}_{L^{2}} \|\Lambda^{2 \alpha}u \|_{L^{2}}^{2}+CF_{0,1}^{\f{ (12\alpha-5)}{6\alpha-5}}.
\ea
\ee
According to the definition of $F_{1,1}$ in \eqref{3.39}, we obtain
$$F_{0,1}^{\f{ (12\alpha-5)}{6\alpha-5}}\leq CF_{1,1}^{\f{ (12\alpha-5)}{10\alpha-5}},
\|\Lambda^{ \alpha}u \|^{\f{8\alpha}{ 6\alpha-5}}_{L^{2}} \|\Lambda^{2 \alpha}u \|_{L^{2}}^{2}\leq  CF_{0 ,1}^{\f{4\alpha }{6\alpha-5}}G_{0 ,1},$$
which enables us to  reformulate \eqref{3.50v1} as
\be\ba\label{3.50v2}
 \f{d}{dt}F_{1,1}+G_{1,1}
\leq C F_{1,1}^{\f{ (12\alpha-5)}{10\alpha-5}},
\ea
\ee
where we have chosen $\eta$ sufficiently small.\\
With the help of
\eqref{3.49} and Lemma \ref{hofflemma}, we deduce from \eqref{3.50v2} that
\be
G_{1,1} \in L^{\f{ (6\alpha-5)}{12\alpha-5}},
\ee
which yields
\be
\| \Lambda^{2\alpha} u_{t}  \|_{L^{2}}^{2},\| u_{tt}  \|_{L^{2}}^{2}  \in L^{\f{ (6\alpha-5)}{12\alpha-5}}.
\ee
Hence, we prove \eqref{3.40} for $r=1$.

Next, we assume that \eqref{3.40} is valid for $r\leq k-1$. It suffices to show  \eqref{3.40} for $r=k$. To this end, inductive hypothesis yields
\be\label{3.56}
\left\{\ba
&F_{j,0}\in L^\f{6\alpha-5}{4j\alpha+ 4\alpha-5},\\
&F_{j,1}\in L^\f{6\alpha-5}{4j\alpha+ 6\alpha-5},\\
&\| u^{( k) }_{t}\|_{L^{2}}^{2}\in L^{\f{6\alpha-5}{4k\alpha+4 \alpha-5}}. \ea\right.\ee
Thus,
\be\label{3.57} F_{k,0}=\|u_{t}^{(k)}\|_{L^{2}}^{2}+\sum_{j=1}^{k-1}\f{4j\alpha+4 \alpha-5}{4k\alpha+4 \alpha-5}F_{j,0}^{\f{4k\alpha+4 \alpha-5}{4j\alpha+4 \alpha-5}}+\sum_{j=0}^{k-1}\f{4j\alpha+6 \alpha-5}{4k\alpha+4 \alpha-5}F_{j,1}^{\f{4k\alpha+4 \alpha-5}{4j\alpha+6 \alpha-5}}\in L^{\f{6\alpha-5}{4k\alpha+4 \alpha-5}}.
 \ee
Using inductive hypothesis  \eqref{3.40} for $j\leq k-1$, we observe that
$$
\f{d}{dt}F_{j,1}+G_{j,1}\leq CF^{\f{4j\alpha+7 \alpha-5}{4j\alpha+6 \alpha-5}}_{j,1},\f{d}{dt}F_{j,0}+G_{j,0}\leq CF^{\f{4j\alpha+6 \alpha-5}{4j\alpha+4 \alpha-5}}_{r,0}.
$$
A  routine computation gives rise to
$$\ba
\f{4j\alpha+6 \alpha-5}{4k\alpha+4 \alpha-5}\f{d}{dt}F_{j,1}^{\f{4k\alpha+4 \alpha-5}{4j\alpha+6 \alpha-5}}+F_{j,1}^{\f{4(k-j)\alpha-2\alpha}{4j\alpha+6 \alpha-5}}G_{j,1}\leq CF^{\f{4k\alpha+6 \alpha-5}{4j\alpha+6 \alpha-5}}_{j,1},\\
\f{4j\alpha+4 \alpha-5}{4k\alpha+4 \alpha-5}\f{d}{dt}F_{j,0}^{\f{4k\alpha+4 \alpha-5}{4j\alpha+4 \alpha-5}}+F_{j,1}^{\f{4(k-j)\alpha }{4j\alpha+4 \alpha-5}}G_{j,0}\leq CF^{\f{4k\alpha+6 \alpha-5}{4j\alpha+4 \alpha-5}}_{j,0},
\ea$$
which together with \eqref{3.31v1} yields that
 \be\ba\label{3.57v1}
& \f{d}{dt}\B[\| u _{t}^{(k)}\|^{2}_{L^{2}}+\sum_{j=0}^{k-1}\f{4j\alpha+6 \alpha-5}{4k\alpha+4 \alpha-5}F_{j,1}^{\f{4k\alpha+4 \alpha-5}{4j\alpha+6 \alpha-5}}+\sum_{j=1}^{k-1}\f{4j\alpha+4 \alpha-5}{4k\alpha+4 \alpha-5}F_{j,0}^{\f{4k\alpha+4 \alpha-5}{4j\alpha+4 \alpha-5}}\B]\\&+\f12\|\Lambda^{ \alpha}u _{t}^{(k)}\|_{L^{2}}^{2} +\sum_{j=0}^{k-1}F_{j,1}^{\f{4(r-j)\alpha-2\alpha}{4j\alpha+6 \alpha-5}}G_{j,1} +\sum_{j=1}^{k-1}F_{j,1}^{\f{4(r-j)\alpha }{4j\alpha+4 \alpha-5}}G_{j,0}\\
\leq& C\|u_{t}^{(k)}\|
^{\f{2(4k\alpha+6\alpha-5)}{4k\alpha+4\alpha-5}}
_{L^{2}}
+C\sum_{j=1}^{k-1}\|u_{t}^{(j)}\|
^{\f{2(4k\alpha+6\alpha-5)}{4j\alpha+4\alpha-5}}
_{L^{2}}
+C\sum_{j=0}^{k-1}\|\Lambda^{\alpha}u_{t}^{( j)} \|_{L^{2}}^{\f{2(4k\alpha+6\alpha-5)}{4j\alpha+6\alpha-5}}
\\&+2\eta\sum_{j=1}^{k-1}\|\Lambda^{\alpha}u_{t}^{( j)} \|_{L^{2}}^{\f{4\alpha(2(k-j)-1)}{  4j\alpha+6\alpha-5}}
\|\Lambda^{2\alpha}u_{t}^{(j)} \|^{2}_{L^{2}}+C\sum_{j=0}^{k-1}F^{\f{4k\alpha+6 \alpha-5}{4j\alpha+6 \alpha-5}}_{j,1}+C\sum_{j=1}^{k-1}F^{\f{4k\alpha+6 \alpha-5}{4j\alpha+4 \alpha-5}}_{j,0}.
\ea
\ee
Making use of definition of $F_{j,0}$ and  $F_{j,1}$ in \eqref{3.41}, we  have
$\|\Lambda^{\alpha}u_{t}^{( j)} \|_{L^{2}} \leq CF_{j,1}$ and
$\|  u_{t}^{( j)} \|_{L^{2}} \leq CF_{j,0}.$  In addition, there holds
$$\|\Lambda^{\alpha}u_{t}^{( j)} \|_{L^{2}}^{\f{4\alpha(2(k-j)-1)}{  4j\alpha+6\alpha-5}}
\|\Lambda^{2\alpha}u_{t}^{(j)} \|^{2}_{L^{2}}\leq CF_{j,1}^{\f{4(k-j)\alpha-2\alpha}{4j\alpha+6 \alpha-5}}G_{j,1}.$$
As a consequence, we  take $\eta$ sufficient small to rewrite \eqref{3.57v1}
as
$$\ba
& \f{d}{dt}\B[\| u _{t}^{(k)}\|^{2}_{L^{2}}+\sum_{j=0}^{k-1}\f{4j\alpha+6 \alpha-5}{4k\alpha+4 \alpha-5}F_{j,1}^{\f{4k\alpha+4 \alpha-5}{4j\alpha+6 \alpha-5}}+\sum_{j=1}^{k-1}\f{4j\alpha+4 \alpha-5}{4k\alpha+4 \alpha-5}F_{j,0}^{\f{4k\alpha+4 \alpha-5}{4j\alpha+4 \alpha-5}}\B]\\&+\f12\|\Lambda^{ \alpha}u _{t}^{(k)}\|_{L^{2}}^{2} +\f12\sum_{j=0}^{k-1}F_{j,1}^{\f{4(r-j)\alpha-2\alpha}{4j\alpha+6 \alpha-5}}G_{j,1} +\f12\sum_{j=1}^{k-1}F_{j,1}^{\f{4(r-j)\alpha }{4j\alpha+4 \alpha-5}}G_{j,0}\\
\leq& C\|u_{t}^{(k)}\|
^{\f{2(4k\alpha+6\alpha-5)}{4k\alpha+4\alpha-5}}
_{L^{2}}
  +C\sum_{j=0}^{k-1}F^{\f{4k\alpha+6 \alpha-5}{4j\alpha+6 \alpha-5}}_{j,1}+C\sum_{j=1}^{k-1}F^{\f{4k\alpha+6 \alpha-5}{4j\alpha+4 \alpha-5}}_{j,0},
\ea
$$
Thanks to the definition of \eqref{3.41}, we see that
$$
C\sum_{j=0}^{k-1}F^{\f{4k +1}{4j +1}}_{j,1}\leq CF_{k,0}^{\f{ 4k\alpha+6\alpha-5 }{4k\alpha+4\alpha-5}},~~
C\sum_{j=1}^{k-1}F^{\f{4k +1}{4j -1}}_{j,0}\leq CF_{k,0}^{\f{ 4k\alpha+6\alpha-5 }{4k\alpha+4\alpha-5}}.
 $$
 Hence, we find
\be\label{3.58}
\f{d}{dt}F_{k,0}+G_{k,0}\leq CF_{k,0}^{\f{ 4k\alpha+6\alpha-5 }{4k\alpha+4\alpha-5}}.
\ee
Applying Lemma \ref{hofflemma}, we conclude by \eqref{3.58}  and \eqref{3.57} that
$$
G_{k,0}\in L^{\f{6\alpha-5}{4k\alpha+6\alpha  -5}},
$$
which turns out that
\be\label{3.59}
\|\Lambda^{ \alpha}u _{t}^{(k)}\|_{L^{2}}^{2} \in L^{\f{6\alpha-5}{4k\alpha+6\alpha  -5}}.\ee
This shows $\eqref{3.40}_{1}$-$\eqref{3.40}_{3}$ for $r=k$. We turn our attentions to demonstrating  $\eqref{3.40}_{4}$-$\eqref{3.40}_{6}$ for $r=k$.
Combining \eqref{3.59} and \eqref{3.56}, we discover that
\be\label{3.62}
F_{k,1}=\|\Lambda^{\alpha}u_{t}^{(k)}\|_{L^{2}}^{2}+\sum_{j=0}^{k-1}\f{4j\alpha+6 \alpha-5}{4k\alpha+6 \alpha-5}F_{j,1}^{\f{4k\alpha+6 \alpha-5}{4j\alpha+6 \alpha-5}}\in L^{\f{6\alpha-5}{4k\alpha+6\alpha  -5}}.\ee
Since
\eqref{3.40} is valid for $r\leq k-1$, we find
$$
\f{d}{dt}F_{j,1}+G_{j,1}\leq CF^{\f{4j\alpha+7 \alpha-5}{4j\alpha+6 \alpha-5}}_{j,1},\f{d}{dt}F_{r,0}+G_{r,0}\leq CF^{\f{4r\alpha+6 \alpha-5}{4r\alpha+4 \alpha-5}}_{r,0},
$$
which means
\be\ba\label{3.63}
\f{4j\alpha+6 \alpha-5}{4k\alpha+6 \alpha-5}\f{d}{dt}F_{j,1}^{\f{4k\alpha+6 \alpha-5}{4j\alpha+6 \alpha-5}}+F_{j,1}^{\f{4(k-j)\alpha }{4j\alpha+6 \alpha-5}}G_{j,1}\leq CF^{\f{4k\alpha+8 \alpha-5}{4j\alpha+6 \alpha-5}}_{j,1}.
\ea\ee
Taking advantaging of definition of \eqref{3.41}, one has
$$
\sum_{j>0}\|\Lambda^{\alpha}u_{t}^{(r-j)} \|_{L^{2}}^{\f{8\alpha j}{  4(r-j)\alpha+6\alpha-5}}
\|\Lambda^{2\alpha}u_{t}^{(r-j)} \|^{2}_{L^{2}}\leq C\sum_{j=0}^{k-1}F_{j,1}^{\f{4(k-j)\alpha }{4j\alpha+6 \alpha-5}}G_{j,1}.$$
As a consequence, we derive from \eqref{3.38v1} and \eqref{3.63}  that
\be\ba\label{3.64}
& \f{d}{dt}\B[\| \Lambda^{\alpha}u _{t}^{(k)}\|^{2}_{L^{2}}+\sum_{j=0}^{k-1}\f{4j\alpha+6 \alpha-5}{4k\alpha+6 \alpha-5}F_{j,1}^{\f{4k\alpha+6 \alpha-5}{4j\alpha+6 \alpha-5}}\B]\\&+\f12\|\Lambda^{2 \alpha}u _{t}^{(k)}\|_{L^{2}}^{2} +\f12\| u _{t}^{(k+1)}\|_{L^{2}}^{2}+\f12\sum_{j=0}^{k-1}F_{j,1}^{\f{4(k-j)\alpha }{4j\alpha+6 \alpha-5}}G_{j,1}\\ \leq& C\|\Lambda^{\alpha}u_{t}^{(k )}  \|_{L^{2}}^{\f{2(4k\alpha+8\alpha-5)}{ 4k\alpha +6\alpha-5}}+C\sum_{j=0}^{k-1}F_{j,1}^{ \f{4 k\alpha+8\alpha-5}{4j\alpha+6\alpha-5}},
\ea
\ee
where we have chosen $\eta$  sufficiently small.\\
In view of definition of \eqref{3.41}, we get
$$ \sum_{j=0}^{k-1}F_{j,1}^{ \f{4 k\alpha+8\alpha-5}{4j\alpha+6\alpha-5}}\leq CF_{k,1}^{\f{ (4k\alpha+8\alpha-5)}{4k\alpha+6\alpha-5}},
$$
which together with \eqref{3.64} yields that
$$
\f{d}{dt}F_{k,1}+G_{k,1}\leq CF_{k,1}^{\f{ (4k\alpha+8\alpha-5)}{4k\alpha+6\alpha-5}}.
$$
In view of $
F_{k,1} \in L^{\f{6\alpha-5}{4k\alpha+6\alpha  -5}} $ in \eqref{3.62} and Lemma \ref{hofflemma}, we derive from the latter inequality that  $
G_{k,1} \in L^{\f{6\alpha-5}{4k\alpha+8\alpha  -5}}. $ We end up with $\| u _{t}^{(k+1)}\|_{L^{2}}^{2}\in L^{\f{6\alpha-5}{4k\alpha+8\alpha  -5}}$. We prove $\eqref{3.40}$ for $r=k$.
This completes the proof of the theorem.
\end{proof}
 \section{Space-time derivative estimates of solutions to the fractional Navier-Stokes equations}

The goal of this section is to show that space-time derivative estimates $\Lambda^{n\alpha}u^{(m)}_{t}  \in L^{\f{2(6\alpha-5)}{4m\alpha+2n\alpha+4 \alpha-5}}L^{2}$ for any $m, n \in \mathbb{N}$.   We will apply time derivative estimates \eqref{timederivative} in the deduction of regularity  \eqref{1.1} of solutions of the fractional Navier-Stokes equations.

\begin{lemma}Assume that $u$ is a smooth solution of   the   fractional Navier-Stokes equations \eqref{GNS}. Then for each $t >0$ and $\eta >0$, there hold
\be\ba\label{4.13v2}
&\f{d}{dt} \| \Lambda^{s\alpha}u _{t}^{(r)}\|^{2}_{L^{2}}+\f{1}{2}\| \Lambda^{(s+1)\alpha}u _{t}^{(r)}\|^{2}_{L^{2}}\\\leq&C\sum_{j }^{r-1} \|\Lambda^{ s \alpha}u_{t}^{(j)}\|_{L^{2}}
^{\f{2(4r\alpha+2(s+1)\alpha+4\alpha-5)}
{4j\alpha+2s\alpha+4\alpha-5}}+C\|\Lambda^{ s \alpha}u_{t}^{(r)}\|_{L^{2}}
^{\f{2(4r\alpha+2(s+1)\alpha+4\alpha-5)}
{4r\alpha+2s\alpha+4\alpha-5}}\\&+C\sum_{j }^{r}
\|\Lambda^{\alpha}u^{(r-j)}_{t} \|^{\f{2(4r\alpha+2(s+1)\alpha+4\alpha-5)}{4(r-j)\alpha+2 \alpha+4\alpha-5}}_{L^{2}}\\&+2\eta\sum_{j>0 }^{r}\|\Lambda^{s\alpha}u^{(r-j )}_{t}\|^{\f{8\alpha j}{ 4(r-j)\alpha+2 s\alpha+4\alpha-5}}_{L^{2}}\|\Lambda^{(s+1)\alpha} u^{(r-j )}_{t}\|_{L^{2}}^{2},
\ea\ee
and
\be\ba\label{4.15v1}
 \f{1}{2}\f{d}{dt} \| \Lambda^{s\alpha}u \|^{2}_{L^{2}}+\| \Lambda^{(s+1)\alpha}u \|^{2}_{L^{2}} \leq C\|\Lambda^{ s \alpha}u \|_{L^{2}}
^{\f{2( 2(s+1)\alpha+4\alpha-5)}
{ 2s\alpha+4\alpha-5}} +C
\|\Lambda^{\alpha}u \|^{\f{2( 2(s+1)\alpha+4\alpha-5)}{  2 \alpha+4\alpha-5}}_{L^{2}}.
\ea
\ee
\end{lemma}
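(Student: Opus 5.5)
We follow the same energy method as in the proof of Lemma 3.1 and \eqref{3.38v1}, now testing with $\Lambda^{2s\alpha}$. For \eqref{4.13v2} we take the $r$-times time-differentiated system \eqref{3.26} and pair it in $L^{2}$ with $\Lambda^{2s\alpha}u_{t}^{(r)}$. The pressure term drops by the divergence-free condition, and we obtain
$$\f12\f{d}{dt}\|\Lambda^{s\alpha}u_{t}^{(r)}\|_{L^{2}}^{2}+\|\Lambda^{(s+1)\alpha}u_{t}^{(r)}\|_{L^{2}}^{2}=-\sum_{j=0}^{r}\binom{r}{j}\big\langle \Lambda^{(s-1)\alpha}(u_{t}^{(j)}\cdot\nabla u_{t}^{(r-j)}),\Lambda^{(s+1)\alpha}u_{t}^{(r)}\big\rangle .$$
Here we have moved $\Lambda^{(s-1)\alpha}$ onto the nonlinearity. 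Estimate \eqref{4.15v1} is the special case $r=0$, which we treat first as a model computation; for $s=1$ it is \eqref{3.2v1}.

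\textbf{Estimating the nonlinearity.} We apply the fractional Leibniz rule \eqref{fraleibnilaw} to $\Lambda^{(s-1)\alpha}(f\,\nabla g)$ in $L^{2}$, with $f=u_{t}^{(j)}$ and $g=u_{t}^{(r-j)}$. This splits each product into two pieces, each shaped like the basic trilinear pattern $\|A\|_{L^{6/(3-2\alpha)}}\|\nabla B\|_{L^{3/\alpha}}\|D\|_{L^{2}}$:
\begin{itemize}
\item[(a)] $\|\Lambda^{(s-1)\alpha}f\|_{L^{6/(3-2\alpha)}}\|\nabla g\|_{L^{3/\alpha}}\le C\|\Lambda^{s\alpha}f\|_{L^{2}}\|\nabla g\|_{L^{3/\alpha}}$;
\item[(b)] $\|f\|_{L^{6/(3-2\alpha)}}\|\nabla\Lambda^{(s-1)\alpha}g\|_{L^{3/\alpha}}\le C\|\Lambda^{\alpha}f\|_{L^{2}}\|\nabla\Lambda^{(s-1)\alpha} g\|_{L^{3/\alpha}}$.
\end{itemize}
In (a) we interpolate $\|\nabla g\|_{L^{3/\alpha}}$ between $\|\Lambda^{\alpha}g\|_{L^{2}}$ and $\|\Lambda^{s\alpha}g\|_{L^{2}}$ or $\|\Lambda^{(s+1)\alpha}g\|_{L^{2}}$ via \eqref{keyinequality}. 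In (b) we apply \eqref{keyinequality} to $\Lambda^{(s-1)\alpha}g$, which gives $\|\Lambda^{s\alpha}g\|^{\f{6\alpha-5}{2\alpha}}\|\Lambda^{(s+1)\alpha}g\|^{\f{5-4\alpha}{2\alpha}}$ in the range $5/6<\alpha<5/4$.

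\textbf{Absorbing terms with Young's inequality.} The distinguished term $j=0$ has $f=u$ and $g=u_{t}^{(r)}$. The commutator-free piece $\langle u\cdot\nabla\Lambda^{(s-1)\alpha}u_t^{(r)},\cdot\rangle$ is not zero here, since the test function is $\Lambda^{(s+1)\alpha}$ rather than $\Lambda^{(s-1)\alpha}$. It is handled exactly as in \eqref{3.16}: we absorb $\varepsilon\|\Lambda^{(s+1)\alpha}u_{t}^{(r)}\|^{2}$ and are left with a product of powers of $\|\Lambda^{\alpha}u\|$ and $\|\Lambda^{s\alpha}u_{t}^{(r)}\|$. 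A further Young split turns this into the two pure powers with the stated exponents. For $j>0$, both factors are of lower time order, and we again absorb $\|\Lambda^{(s+1)\alpha}u_{t}^{(r)}\|$ by Young. Whenever $\|\Lambda^{(s+1)\alpha}u_{t}^{(r-j)}\|^{(5-4\alpha)/\alpha}$ remains, we use Young with the pair $\big(\tfrac{2\alpha}{5-4\alpha},\cdot\big)$, as in \eqref{3.15} and \eqref{3.34}. This produces the term
$$\eta\,\|\Lambda^{s\alpha}u^{(r-j)}_{t}\|^{\f{8\alpha j}{4(r-j)\alpha+2s\alpha+4\alpha-5}}\,\|\Lambda^{(s+1)\alpha}u^{(r-j)}_{t}\|^{2}$$
together with pure powers of $\|\Lambda^{s\alpha}u_t^{(j)}\|$ and $\|\Lambda^{\alpha}u_t^{(r-j)}\|$.

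\textbf{Main obstacle.} The only genuinely delicate step is the exponent bookkeeping. The exponents in every Young splitting must be chosen so that each pure power matches the scaling weight $4r\alpha+2(s+1)\alpha+4\alpha-5$. We check this with the scaling heuristic: $\|\Lambda^{a\alpha}u_{t}^{(j)}\|^{2}$ has weight $4j\alpha+2a\alpha+4\alpha-5$, and every term on the right-hand side must be homogeneous of total weight $4r\alpha+2(s+1)\alpha+4\alpha-5$. This homogeneity dictates each Young exponent uniquely, so we solve for them rather than guess. The remaining technical points are routine: the validity of the Leibniz rule at endpoint exponents, and the requirement $s\ge 1$ so that $\Lambda^{(s-1)\alpha}$ has nonnegative order. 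Summing over $j$ with $\varepsilon$ small, chosen relative to $\sum_j\binom{r}{j}$, yields \eqref{4.13v2}.
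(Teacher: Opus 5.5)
Your overall scheme is the paper's: an energy estimate at level $\Lambda^{s\alpha}$ for the differentiated system, one $\Lambda^{\alpha}$ moved onto the test function, the fractional Leibniz rule, the $L^{\frac{6}{3-2\alpha}}\times L^{\frac{3}{\alpha}}\times L^{2}$ pattern with \eqref{keyinequality}, then Young. The one genuine difference is the $j=0$ term. The paper uses $\langle u\cdot\nabla\Lambda^{s\alpha}u_t^{(r)},\Lambda^{s\alpha}u_t^{(r)}\rangle=0$ and the commutator estimate of Lemma~\ref{odelemma}; you instead keep the transport piece and absorb it using $\alpha>5/6$, as in \eqref{3.16}. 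That works: it leaves $C\|\Lambda^{\alpha}u\|_{L^2}^{\frac{4\alpha}{6\alpha-5}}\|\Lambda^{s\alpha}u_t^{(r)}\|_{L^2}^{2}$, which splits into two admissible pure powers, and it makes the commutator lemma unnecessary. Your interpolation exponents in (b) are also the right ones; those written in \eqref{4.8} are correct only for $s=2$. So \eqref{4.15v1} goes through, as does every piece of \eqref{4.13v2} that does not involve $\|\Lambda^{(s+1)\alpha}u_t^{(r-j)}\|_{L^2}$ with $j>0$.

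The gap is the step you call bookkeeping, namely producing the $\eta$-term for $j>0$. After absorbing $\|\Lambda^{(s+1)\alpha}u_t^{(r)}\|_{L^2}$, piece (b) is $A^{2}B^{\frac{6\alpha-5}{\alpha}}D^{\frac{5-4\alpha}{\alpha}}$, where $A=\|\Lambda^{\alpha}u_t^{(j)}\|_{L^2}$, $B=\|\Lambda^{s\alpha}u_t^{(r-j)}\|_{L^2}$ and $D=\|\Lambda^{(s+1)\alpha}u_t^{(r-j)}\|_{L^2}$. Apply Young with the pair $(\frac{2\alpha}{5-4\alpha},\frac{2\alpha}{6\alpha-5})$ and the power $B^{8\alpha j/w}$ prescribed by \eqref{4.13v2}, where $w=4(r-j)\alpha+2s\alpha+4\alpha-5$. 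This leaves $A^{\frac{4\alpha}{6\alpha-5}}B^{\kappa}$ with $\kappa=2-\frac{8\alpha j(5-4\alpha)}{(6\alpha-5)w}$. Since $\kappa$ can be negative, nothing splits further in general.

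Homogeneity fixes the exponents but not their signs. A splitting into the admissible terms exists only if the exponent vector of the product lies in the convex hull of theirs. Concretely, for $\alpha=1$, $s=2$, $r=j=1$ the claimed split reads $\|\Lambda u_t\|^{2}\|\Lambda^{2}u\|\|\Lambda^{3}u\|\le\eta\|\Lambda^{2}u\|^{8/3}\|\Lambda^{3}u\|^{2}+C\|\Lambda u_t\|^{18/5}+C\|\Lambda^{2}u\|^{6}$, with $\kappa=-2/3$. This fails for $\|\Lambda u_t\|=1$, $\|\Lambda^{2}u\|=\lambda^{-3}$, $\|\Lambda^{3}u\|=\lambda^{4}$ as $\lambda\to\infty$. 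The left side is $\lambda$, while every term on the right of \eqref{4.13v2} stays bounded once you set $\|\Lambda u\|=\|\Lambda^{2}u_t\|=1$, which is compatible with interpolation.

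The paper makes the same move (\eqref{3.15}, \eqref{3.34}, and the Young step after \eqref{4.8}), so you are not departing from it. Still, the argument as written does not prove \eqref{4.13v2}. A repair has to redistribute derivatives in the cross terms. In the example, bounding that piece by $\|u_t\|_{L^\infty}\|\nabla^{2}u\|_{L^{2}}$ with $\|u_t\|_{L^\infty}\lesssim\|\Lambda u_t\|^{1/2}\|\Lambda^{2}u_t\|^{1/2}$ gives only admissible terms. For $\alpha<1$, however, the piece with all of $\Lambda^{(s-1)\alpha}\nabla$ on $u_t^{(r-j)}$ needs regularity at least $(s-1)\alpha+1>s\alpha$. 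So some power of $D$ always comes back, and the sign condition must be verified for each $(r,j,s,\alpha)$ rather than read off from scaling.
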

\begin{proof}
Leibniz rule  enables us to rewrite the   fractional Navier-Stokes
equations \eqref{GNS} as
\be\ba\label{4.1} \Lambda^{s\alpha} u^{(r+1) }_{t}+ (-\Delta)^{\alpha}\Lambda^{s\alpha}u^{(r)}_{t}+\sum_{j=0}^{r}\binom{r}{j}\Lambda^{s\alpha}(u_{t}^{(j)}\cdot\nabla u^{(r-j)}_{t}) +\nabla \Lambda^{s\alpha}p^{(r)}_{t}=0.
\ea
\ee
Taking the $L^{2}$ inner product of the above equation with $\Lambda^{s\alpha}u _{t}^{(r)}$, we obtain
\be\ba\label{4.2}
&\f{1}{2}\f{d}{dt} \| \Lambda^{s\alpha}u _{t}^{(r)}\|^{2}_{L^{2}}+\| \Lambda^{(s+1)\alpha}u _{t}^{(r)}\|^{2}_{L^{2}}=- I,
\ea
\ee
where
\be\ba
I=\langle\Lambda^{s\alpha}(u \cdot\nabla u^{(r )}_{t}), \Lambda^{s\alpha}u _{t}^{(r)}\rangle+\sum_{j>0}\binom{r}{j}\langle \Lambda^{s\alpha}(u_{t}^{(j)}\cdot\nabla u^{(r-j)}_{t}), \Lambda^{s\alpha}u _{t}^{(r)}\rangle.
\ea
\ee
Taking advantage of incompressiable condition, we have
\be\ba \langle\Lambda^{s\alpha}(u \cdot\nabla u^{(r )}_{t}), \Lambda^{s\alpha}u _{t}^{(r)}\rangle=\langle[\Lambda^{s\alpha}(u \cdot\nabla u^{(r )}_{t})-u \cdot\Lambda^{s\alpha}\nabla u^{(r )}_{t})], \Lambda^{s\alpha}u _{t}^{(r)}\rangle.
\ea
\ee
The Kato-Ponce type     commutator estimate \eqref{katoponce} allows us to get
\be\ba\label{3.5} &\B|\langle\Lambda^{s\alpha}(u \cdot\nabla u^{(r )}_{t})\cdot\Lambda^{s\alpha}u _{t}^{(r)}\rangle\B|\\
\leq& (\|\Lambda^{s\alpha}u\|_{L^{2}}\|\nabla u^{(r )}_{t}\|_{L^{\f{3}{\alpha}}}+\|\nabla u\|_{L^{\f{3}{\alpha}}}\|\Lambda^{s\alpha} u^{(r )}_{t}\|_{L^{2}})\|\Lambda^{s\alpha}u _{t}^{(r)}\|_{L^{\f{6}{3-2\alpha}}}.
\ea
\ee
Applying interpolation inequality \eqref{keyinequality},
we get
$$\ba
&\|\nabla u^{(r )}_{t}\|_{L^{\f{3}{\alpha}}}\leq C\|\Lambda^{\alpha}u^{(r )}_{t}\|^{\f{2(s+1)\alpha -5 }{2\alpha(s-1)}}_{L^{2}}\|\Lambda^{s\alpha} u^{(r )}_{t}\|_{L^{2}}^{\f{5-4\alpha}{2\alpha(s-1)}},\\
&\|\nabla u \|_{L^{\f{3}{\alpha}}}\leq C\|\Lambda^{\alpha}u \|^{\f{2(s+1)\alpha -5 }{2\alpha(s-1)}}_{L^{2}}\|\Lambda^{s\alpha} u \|_{L^{2}}^{\f{5-4\alpha}{2\alpha(s-1)}}.
\ea
$$
Substituting the previous inequalities into \eqref{3.5}, we conclude by Cauchy-Schwarz inequality  that
\be\ba\label{4.7} &\B| \langle\Lambda^{s\alpha}(u \cdot\nabla u^{(r )}_{t})\cdot\Lambda^{s\alpha}u _{t}^{(r)}\rangle\B|\\
\leq& C\|\Lambda^{s\alpha}u\|_{L^{2}}\|\Lambda^{\alpha}u^{(r )}_{t}\|^{\f{2(s+1)\alpha -5 }{2\alpha(s-1)}}_{L^{2}}\|\Lambda^{s\alpha} u^{(r )}_{t}\|_{L^{2}}^{\f{5-4\alpha}{2\alpha(s-1)}}\|\Lambda^{(s+1)\alpha}u _{t}^{(r)}\|_{L^{2}}\\&+C\|\Lambda^{\alpha}u \|^{\f{2(s+1)\alpha -5 }{2\alpha(s-1)}}_{L^{2}}\|\Lambda^{s\alpha} u \|_{L^{2}}^{\f{5-4\alpha}{2\alpha(s-1)}}\|\Lambda^{s\alpha} u^{(r )}_{t}\|_{L^{2}} \|\Lambda^{(s+1)\alpha}u _{t}^{(r)}\|_{L^{2}}\\
\leq& C\|\Lambda^{s\alpha}u\|_{L^{2}}^{2}\|\Lambda^{\alpha}u^{(r )}_{t}\|^{\f{2(s+1)\alpha -5 }{ \alpha(s-1)}}_{L^{2}}\|\Lambda^{s\alpha} u^{(r )}_{t}\|_{L^{2}}^{\f{5-4\alpha}{ \alpha(s-1)}} \\&+C\|\Lambda^{s\alpha} u^{(r )}_{t}\|^{2}_{L^{2}}\|\Lambda^{\alpha}u \|^{\f{2(s+1)\alpha -5 }{ \alpha(s-1)}}_{L^{2}}\|\Lambda^{s\alpha} u \|_{L^{2}}^{\f{5-4\alpha}{ \alpha(s-1)}} +\f\varepsilon2\|\Lambda^{(s+1)\alpha}u _{t}^{(r)}\|^{2}_{L^{2}},
\ea
\ee
where $\varepsilon$ will be determined later.\\
In view of the H\"older inequality, Kato-Ponce type   Leibniz rules \eqref{fraleibnilaw} and the Cauchy-Schwarz inequality, we observe that
\be\ba\label{4.8}
 &\B|\sum_{j>0}\binom{r}{j}\langle\Lambda^{s\alpha}(u_{t}^{(j)}\cdot\nabla u^{(r-j)}_{t}),\Lambda^{s\alpha}u _{t}^{(r)}\rangle\B|\\
  = & \B|\binom{r}{j}\sum_{j>0}\langle\Lambda^{(s-1)\alpha}(u_{t}^{(j)}\cdot\nabla u^{(r-j)}_{t})\cdot\Lambda^{(s+1)\alpha}u _{t}^{(r)}\rangle\B|\\
 \leq & C \sum_{j>0}\|\Lambda^{(s-1)\alpha}u_{t}^{(j)}\|_{L^{\f{6}{3-2\alpha}}}\|\nabla u^{(r-j)}_{t}\|_{L^{\f{3}{\alpha}}}\|\Lambda^{(s+1)\alpha}u _{t}^{(r)}\|_{L^{2}}
\\& +C\sum_{j>0}\|u_{t}^{(j)}\|_{L^{\f{6}{3-2\alpha}}}\|\Lambda^{(s-1)\alpha}\nabla u^{(r-j)}_{t}\|_{L^{\f{3}{\alpha}}}\|\Lambda^{(s+1)\alpha}u _{t}^{(r)}\|_{L^{2}}
\\
 \leq & C\sum_{j>0}\|\Lambda^{ s \alpha}u_{t}^{(j)}\|_{L^{2}}\|\Lambda^{\alpha}u^{(r-j)}_{t} \|^{\f{2(s+1)\alpha -5 }{2\alpha(s-1)}}_{L^{2}}\|\Lambda^{s\alpha} u^{(r-j)}_{t} \|_{L^{2}}^{\f{5-4\alpha}{2\alpha(s-1)}}\|\Lambda^{(s+1)\alpha}u _{t}^{(r)}\|_{L^{2}}
 \\&+C\sum_{j>0}\|\Lambda^{\alpha } u_{t}^{(j)}\|_{L^{2}}\|\Lambda^{s\alpha}u^{(r-j )}_{t}\|^{\f{2(s+1)\alpha -5 }{2\alpha(s-1)}}_{L^{2}}\|\Lambda^{(s+1)\alpha} u^{(r-j )}_{t}\|_{L^{2}}^{\f{5-4\alpha}{2\alpha(s-1)}} \|\Lambda^{(s+1)\alpha}u _{t}^{(r)}\|_{L^{2}}\\
 \leq & C\sum_{j>0}\|\Lambda^{ s \alpha}u_{t}^{(j)}\|_{L^{2}}^{2}\|\Lambda^{\alpha}u^{(r-j)}_{t} \|^{\f{2(s+1)\alpha -5 }{ \alpha(s-1)}}_{L^{2}}\|\Lambda^{s\alpha} u^{(r-j)}_{t} \|_{L^{2}}^{\f{5-4\alpha}{ \alpha(s-1)}}
 \\&+C\sum_{j>0}\|\Lambda^{\alpha } u_{t}^{(j)}\|^{2}_{L^{2}}\|\Lambda^{s\alpha}u^{(r-j )}_{t}\|^{\f{2(s+1)\alpha -5 }{ \alpha(s-1)}}_{L^{2}}\|\Lambda^{(s+1)\alpha} u^{(r-j )}_{t}\|_{L^{2}}^{\f{5-4\alpha}{ \alpha(s-1)}} +\f\varepsilon2\|\Lambda^{(s+1)\alpha}u _{t}^{(r)}\|_{L^{2}}^{2}.
 \ea\ee
 From \eqref{4.7}-\eqref{4.8}, we observe that
 \be\ba
&|-\sum_{j}^{r}\binom{r}{j}\langle\Lambda^{s\alpha}(u_{t}^{(j)}\cdot\nabla u^{(r-j)}_{t})\cdot\Lambda^{s\alpha}u _{t}^{(r)}\rangle|
\\
\leq& C\sum_{j }^{r}\|\Lambda^{ s \alpha}u_{t}^{(j)}\|_{L^{2}}^{2}\|\Lambda^{\alpha}u^{(r-j)}_{t} \|^{\f{2(s+1)\alpha -5 }{ \alpha(s-1)}}_{L^{2}}\|\Lambda^{s\alpha} u^{(r-j)}_{t} \|_{L^{2}}^{\f{5-4\alpha}{ \alpha(s-1)}}
 \\&+C\sum_{j>0 }^{r}\|\Lambda^{\alpha } u_{t}^{(j)}\|^{2}_{L^{2}}\|\Lambda^{s\alpha}u^{(r-j )}_{t}\|^{\f{2(s+1)\alpha -5 }{ \alpha(s-1)}}_{L^{2}}\|\Lambda^{(s+1)\alpha} u^{(r-j )}_{t}\|_{L^{2}}^{\f{5-4\alpha}{ \alpha(s-1)}} +\varepsilon\|\Lambda^{(s+1)\alpha}u _{t}^{(r)}\|_{L^{2}}^{2}.
\ea\ee
Inserting this into \eqref{4.2} and taking $\varepsilon$ small, we notice that
\be\ba
&\f{d}{dt} \| \Lambda^{s\alpha}u _{t}^{(r)}\|^{2}_{L^{2}}+\f{1}{2}\| \Lambda^{(s+1)\alpha}u _{t}^{(r)}\|^{2}_{L^{2}}\\
\leq& C\sum_{j }^{r}\|\Lambda^{ s \alpha}u_{t}^{(j)}\|_{L^{2}}^{2}\|\Lambda^{\alpha}u^{(r-j)}_{t} \|^{\f{2(s+1)\alpha -5 }{ \alpha(s-1)}}_{L^{2}}\|\Lambda^{s\alpha} u^{(r-j)}_{t} \|_{L^{2}}^{\f{5-4\alpha}{ \alpha(s-1)}}
 \\&+C\sum_{j>0 }^{r}\|\Lambda^{\alpha } u_{t}^{(j)}\|^{2}_{L^{2}}\|\Lambda^{s\alpha}u^{(r-j )}_{t}\|^{\f{2(s+1)\alpha -5 }{ \alpha(s-1)}}_{L^{2}}\|\Lambda^{(s+1)\alpha} u^{(r-j )}_{t}\|_{L^{2}}^{\f{5-4\alpha}{ \alpha(s-1)}}.
\ea\ee
The Young inequality ensures that
\be\ba
&C\sum_{j }^{r}\|\Lambda^{ s \alpha}u_{t}^{(j)}\|_{L^{2}}^{2}\|\Lambda^{\alpha}u^{(r-j)}_{t} \|^{\f{2(s+1)\alpha -5 }{ \alpha(s-1)}}_{L^{2}}\|\Lambda^{s\alpha} u^{(r-j)}_{t} \|_{L^{2}}^{\f{5-4\alpha}{ \alpha(s-1)}}\\
\leq& C\sum_{j }^{r}\|\Lambda^{ s \alpha}u_{t}^{(j)}\|_{L^{2}}
^{\f{2(4r\alpha+2(s+1)\alpha+4\alpha-5)}{4j\alpha+2s\alpha+4\alpha-5}}+C
\sum_{j }^{r}\|\Lambda^{\alpha}u^{(r-j)}_{t} \|^{\f{2(4r\alpha+2(s+1)\alpha+4\alpha-5)}{4(r-j)\alpha+2 \alpha+4\alpha-5}}_{L^{2}}\\&+C\sum_{j }^{r}
\|\Lambda^{s\alpha}u^{(r-j)}_{t} \|^{\f{2(4r\alpha+2(s+1)\alpha+4\alpha-5)}{4(r-j)\alpha+2 s\alpha+4\alpha-5}}_{L^{2}},
\ea\ee
and
\be\ba
&C\sum_{j>0 }^{r}\|\Lambda^{\alpha } u_{t}^{(j)}\|^{2}_{L^{2}}\|\Lambda^{s\alpha}u^{(r-j )}_{t}\|^{\f{2(s+1)\alpha -5 }{ \alpha(s-1)}}_{L^{2}}\|\Lambda^{(s+1)\alpha} u^{(r-j )}_{t}\|_{L^{2}
}^{\f{5-4\alpha}{ \alpha(s-1)}}\\
\leq&C\sum_{j>0 }^{r}
\|\Lambda^{\alpha } u_{t}^{(j)}\|^{\f{2(4r\alpha+2(s+1)\alpha+4\alpha-5)}{4j\alpha +6\alpha-5}}_{L^{2}}
+C\sum_{j>0 }^{r}\|\Lambda^{s\alpha}u^{(r-j )}_{t}\|^{\f{2(4r\alpha+2(s+1)\alpha+4\alpha-5)}{ 4(r-j)\alpha+2 s\alpha+4\alpha-5}}_{L^{2}}\\&+\eta\sum_{j>0 }^{r}\|\Lambda^{s\alpha}u^{(r-j )}_{t}\|^{\f{8\alpha j}{ 4(r-j)\alpha+2 s\alpha+4\alpha-5}}_{L^{2}}\|\Lambda^{(s+1)\alpha} u^{(r-j )}_{t}\|_{L^{2}}^{2},
\ea\ee
which helps us to obtain
\be\ba\label{4.13v1}
&\f{d}{dt} \| \Lambda^{s\alpha}u _{t}^{(r)}\|^{2}_{L^{2}}+\f{1}{2}\| \Lambda^{(s+1)\alpha}u _{t}^{(r)}\|^{2}_{L^{2}}\\\leq&C\sum_{j }^{r-1} \|\Lambda^{ s \alpha}u_{t}^{(j)}\|_{L^{2}}
^{\f{2(4r\alpha+2(s+1)\alpha+4\alpha-5)}
{4j\alpha+2s\alpha+4\alpha-5}}+C\|\Lambda^{ s \alpha}u_{t}^{(r)}\|_{L^{2}}
^{\f{2(4r\alpha+2(s+1)\alpha+4\alpha-5)}
{4r\alpha+2s\alpha+4\alpha-5}}\\&+C\sum_{j }^{r}
\|\Lambda^{\alpha}u^{(r-j)}_{t} \|^{\f{2(4r\alpha+2(s+1)\alpha+4\alpha-5)}{4(r-j)\alpha+2 \alpha+4\alpha-5}}_{L^{2}}\\&+ \eta\sum_{j>0 }^{r}\|\Lambda^{s\alpha}u^{(r-j )}_{t}\|^{\f{8\alpha j}{ 4(r-j)\alpha+2 s\alpha+4\alpha-5}}_{L^{2}}\|\Lambda^{(s+1)\alpha} u^{(r-j )}_{t}\|_{L^{2}}^{2}.
\ea\ee
Here, $\eta$ will be fixed later.\\
It remains to show \eqref{4.15v1}.
Multiply equality  \eqref{4.1} with $r=0$  by $\Lambda^{s\alpha}u $ to discover
\be\ba\label{4.13v3}
&\f{1}{2}\f{d}{dt} \| \Lambda^{s\alpha}u \|^{2}_{L^{2}}+\| \Lambda^{(s+1)\alpha}u \|^{2}_{L^{2}}=-\sum_{j}\langle\Lambda^{s\alpha}(u \cdot\nabla u )\cdot\Lambda^{s\alpha}u \rangle.
\ea
\ee
A slight modification the proof of \eqref{4.7}, we have
\be\ba  &\B|\int_{\mathbb{R}^{3}}\Lambda^{s\alpha}(u \cdot\nabla u )\cdot\Lambda^{s\alpha}u dx\B|\\
\leq& C \|\Lambda^{\alpha}u \|^{\f{2(s+1)\alpha -5 }{ \alpha(s-1)}}_{L^{2}}\|\Lambda^{s\alpha} u \|_{L^{2}}^{2+\f{5-4\alpha}{ \alpha(s-1)}}   +C\|\Lambda^{(s+1)\alpha}u \|^{2}_{L^{2}}\\\leq & C\|\Lambda^{ s \alpha}u \|_{L^{2}}
^{\f{2( 2(s+1)\alpha+4\alpha-5)}
{ 2s\alpha+4\alpha-5}} +C
\|\Lambda^{\alpha}u \|^{\f{2( 2(s+1)\alpha+4\alpha-5)}{  2 \alpha+4\alpha-5}}_{L^{2}} +\varepsilon\|\Lambda^{(s+1)\alpha}u \|^{2}_{L^{2}}.
\ea
\ee
Plugging this into \eqref{4.13}, we arrive at
\be\ba\label{4.15}
 \f{1}{2}\f{d}{dt} \| \Lambda^{s\alpha}u \|^{2}_{L^{2}}+\| \Lambda^{(s+1)\alpha}u \|^{2}_{L^{2}} \leq C\|\Lambda^{ s \alpha}u \|_{L^{2}}
^{\f{2( 2(s+1)\alpha+4\alpha-5)}
{ 2s\alpha+4\alpha-5}} +C
\|\Lambda^{\alpha}u \|^{\f{2( 2(s+1)\alpha+4\alpha-5)}{  2 \alpha+4\alpha-5}}_{L^{2}}.
\ea
\ee
This completes the proof of the
lemma.
\end{proof}
\newpage
To prove Theorem \ref{the1.1}, we introduce
\be\ba
&F_{p,q}= \|\Lambda^{q\alpha}u_{t}^{(p)}\|_{L^{2}}^{2}+\sum_{\ell=0}^{p-1}
\f{4\ell\alpha+2q\alpha+4 \alpha-5}{4p\alpha+2q\alpha+4 \alpha-5}F_{\ell,q}^{}+\sum_{\ell=0}^{p }
\f{4\ell\alpha+6 \alpha-5}{4p\alpha+2q\alpha+4 \alpha-5}F_{\ell,1}^{\f{4p\alpha+2q\alpha+4 \alpha-5}{4\ell\alpha+6 \alpha-5}},\\
&F_{0,q}= \|\Lambda^{q\alpha}u \|_{L^{2}}^{2}+\f{6 \alpha-5}{ 2q\alpha+4 \alpha-5}
F_{0,1}^{\f{ 2q\alpha+4 \alpha-5}{6 \alpha-5}},\\
&
G_{p,q}= \f12\|\Lambda^{(q+1)\alpha}u_{t}^{(p)}\|_{L^{2}}^{2}+\f12\sum_{\ell=0}^{p-1}
F_{\ell,q}^{\f{4(p-\ell)\alpha }{4\ell\alpha+2q\alpha+4 \alpha-5}}G_{\ell,q},\\
&G_{0,q}= \f12\|\Lambda^{(q+1)\alpha}u \|_{L^{2}}^{2}+\f12
F_{0,1}^{\f{2q\alpha-2\alpha }{6 \alpha-5}}G_{0,1},\\
&G_{p,0}= \f12\|\Lambda^{ \alpha}u_{t}^{(p)}\|_{L^{2}}^{2}+\f12\sum_{\ell=0}^{p-1}
F_{\ell,1}^{\f{4(p-\ell)\alpha -2\alpha}{4\ell\alpha +6 \alpha-5}}G_{\ell,1}.
\ea\ee
The above definitions of notations of $F_{p,q}$, $G_{p,q}, $ $G_{0,q} $ and $ G_{p,0}$ rest on the energy inequalities  \eqref{4.13v2} and \eqref{4.15v1}.

\begin{proof}[Proof of Theorem \ref{the1.1}]
It suffices to show that
\be\label{4.13}
\left\{\ba
&F_{p,q}\in L^{\f{6\alpha-5}{4p\alpha+2q\alpha+4\alpha-5}},\\
&\f{d}{dt}F_{p,q}+G_{p,q}\leq C F^{\f{4p\alpha+2q\alpha+6 \alpha-5}{4p\alpha+2q\alpha+4 \alpha-5}}_{p,q},\\
&G_{p,q}\in L^{\f{6\alpha-5}{4p\alpha+2q\alpha+6 \alpha-5}}, \|\Lambda^{( q+1)\alpha}u^{(p ) }_{t}\|_{L^{2}}^{2}\in L^{\f{6\alpha-5}{4p\alpha+2q\alpha+6 \alpha-5}}.\ea\right.\ee
 We will apply inductive method to demonstrate  \eqref{4.13} for all $p, q\in \mathbb{N}$.
 To make the paper more readable, we denote
\be\ba\label{4.16}
&F_{0,1}= \|\Lambda^{ \alpha}u \|_{L^{2}}^{2}, \\
&F_{1,0}=\| u_{t}  \|_{L^{2}}^{2}+\f{6\alpha-5}{ 8\alpha-5 }\|\Lambda^{\alpha}u  \|_{L^{2}}^{\f{2(8\alpha-5)}{6\alpha-5}}=\| u_{t}  \|_{L^{2}}^{2}+\f{6\alpha-5}{ 8\alpha-5 }F_{0,1}^{\f{ 8\alpha-5}{6\alpha-5}},\\
&F_{0,2}= \|\Lambda^{2\alpha}u \|_{L^{2}}^{2}+\f{6\alpha-5}{8\alpha-5} F_{0,1}^{\f{8\alpha-5}{6\alpha-5}},\\
&F_{1,1}=  \|\Lambda^{ \alpha}u_{t} \|_{L^{2}}^{2}+\f{ 6 \alpha-5}{ 10\alpha-5}
F_{0,1}^{\f{ 10\alpha-5}{ 6 \alpha-5}}, \\
&F_{2,0}= \| u_{t}^{(2)}\|_{L^{2}}^{2}+
\f{8 \alpha-5}{ 12 \alpha-5}F_{1,0}^{\f{ 12 \alpha-5}{8 \alpha-5}}+\f{6 \alpha-5}{ 12 \alpha-5}
F_{0,1}^{\f{ 12 \alpha-5}{6 \alpha-5}}+\f{10 \alpha-5}{ 12 \alpha-5}F_{1,1}^{\f{ 12 \alpha-5}{10 \alpha-5}},\\
&G_{0,1}= \f12\|\Lambda^{2\alpha}u \|_{L^{2}}^{2}, \\
&G_{1,1}= \f12\|\Lambda^{ 2 \alpha}u_{t} \|_{L^{2}}^{2}+\f12
F_{0,1}^{\f{4 \alpha }{6 \alpha-5}}G_{0,1},\\
&G_{0,2}= \f12\|\Lambda^{3\alpha}u \|_{L^{2}}^{2} +\f12F^{\f{2\alpha}{6\alpha-5}}_{0,1}G_{0,1},\\
&G_{2,0}= \f12\|\Lambda^{\alpha} u_{t}^{(2)}\|_{L^{2}}^{2}+\f12
F_{0,1}^{\f{ 6 \alpha }{6 \alpha-5}}G_{0,1}+\f12F_{1,1}^{\f{2 \alpha }{10 \alpha-5}}G_{1,1}.
\ea\ee
In last section, we have proved    \eqref{4.13} for $p+q=1$.
Indeed, we derive from \eqref{3.42}-\eqref{3.48}
that
\be\label{4.21v1}
\left\{\ba
&F_{0,1} \in L^1,F_{ 1,0} \in   L^{\f{ 6\alpha-5 }{ 8\alpha-5}},\\
&\f{d}{dt}F_{0,1}+G_{0,1}\leq CF_{0,1}^{\f{ 8\alpha-5}{6\alpha-5}},\f{d}{dt}F_{ 1,0}+G_{1,0}\leq CF_{ 1,0}^{\f{ 10\alpha-5}{8\alpha-5}},\\
&G_{0,1}\in L^{\f{6\alpha-5}{ 8\alpha-5}}
,  \| \Lambda^{2\alpha}u   \|_{L^{2}}^{2}\in L^{\f{6\alpha-5}{ 8\alpha-5}},G_{1,0}\in L^{\f{6\alpha-5}{10 \alpha-5}},\|\Lambda^{\alpha}u  _{t}\|_{L^{2}}^{2}\in L^{\f{6\alpha-5}{10\alpha-5}},\ea\right.\ee
which yields \eqref{4.13} for $p+q=1$.

 The derivation of   \eqref{4.13}      for $   p+q=k\geq2$
 becomes   quite involved.
 Next we shall consider the case $p+q=2$ for a heuristic analysis. We deduce from
\eqref{4.16}-\eqref{4.21v1} that $F_{0,2} \in L^{\f{ 6 \alpha-5}{8 \alpha-5}}$.\\
On account of \eqref{4.15}, we see that
\be\ba\label{4.21}
&\f{d}{dt} \| \Lambda^{2\alpha}u \|^{2}_{L^{2}}+\f{1}{2}\| \Lambda^{3\alpha}u \|^{2}_{L^{2}} \leq C\|\Lambda^{ 2\alpha}u \|_{L^{2}}
^{\f{2( 10\alpha-5)}
{8\alpha-5}} +C
\|\Lambda^{\alpha}u \|^{\f{2( 10\alpha-5)}{ 6\alpha-5}}_{L^{2}}.
\ea
\ee
From \eqref{4.21v1}, we know that
$$\ba
\f{d}{dt}F_{0,1}+G_{0,1}\leq CF_{0,1}^{\f{ 8\alpha-5}{6\alpha-5}}.
\ea
$$
Consequently, a simple manipulation  leads to
\be\ba\label{4.24v1}
\f{6\alpha-5}{ 8\alpha-5}\f{d}{dt}F_{0,1}^{\f{ 8\alpha-5}{6\alpha-5}}+F^{\f{2\alpha}{6\alpha-5}}_{0,1}G_{0,1}\leq CF_{0,1}^{\f{ 10\alpha-5}{6\alpha-5}}.
\ea
\ee
Combining  \eqref{4.21} and \eqref{4.24}, we get
$$\ba
&\f{1}{2}\f{d}{dt}\B[\| \Lambda^{2\alpha}u \|^{2}_{L^{2}}+\f{6\alpha-5}{ 8\alpha-5}F_{0,1}^{\f{ 8\alpha-5}{6\alpha-5}}\B]+\f12\| \Lambda^{3\alpha}u \|^{2}_{L^{2}}+F^{\f{2\alpha}{6\alpha-5}}_{0,1}G_{0,1} \\
\leq& C\|\Lambda^{ 2\alpha}u \|_{L^{2}}
^{\f{2( 10\alpha-5)}
{8\alpha-5}} +C
\|\Lambda^{\alpha}u \|^{\f{2( 10\alpha-5)}{ 6\alpha-5}}_{L^{2}}
+CF_{0,1}^{\f{ 10\alpha-5}{6\alpha-5}}\\
\leq&C \|\Lambda^{ 2\alpha}u \|_{L^{2}}
^{\f{2( 10\alpha-5)}
{8\alpha-5}}  +CF_{0,1}^{\f{ 10\alpha-5}{6\alpha-5}},
\ea
$$
where the definition of $F_{0,1}$ in \eqref{4.16} was used.

The latter  inequality implies that
\be\ba\label{4.24}
\f{d}{dt}F_{0,2}+G_{0,2}\leq CF_{0,2}^{\f{ 10\alpha-5}{8\alpha-5}}.
\ea
\ee
By means of Lemma \ref{hofflemma} and  $F_{0,2} \in L^{\f{ 6 \alpha-5}{8 \alpha-5}}$, we conclude by the  inequality \eqref{4.24}  that
$$
G_{0,2}\in L^{\f{ 6\alpha-5}{10\alpha-5}},
$$
which means that
\be\| \Lambda^{3\alpha}u \|^{2}_{L^{2}}\in L^{\f{ 6\alpha-5}{10\alpha-5}}.
\ee
We have proved \eqref{4.13} with $p=0, q=2$.
Next, we focus on \eqref{4.13} with $p=q=1$.
Indeed, we drive from \eqref{4.21} that
 $\|\Lambda^{\alpha}u  _{t}\|_{L^{2}}^{2}\in L^{\f{6\alpha-5}{10\alpha-5}}$ and $F_{0,1} \in L^1$, which leads to
 \be\label{4.29}
 F_{1,1} \in L^{\f{6\alpha-5}{10\alpha-5}}.
 \ee
Along the line of \eqref{4.24}, we arrive at
\be\ba\label{4.26}
\f{6\alpha-5}{  10\alpha-5 }\f{d}{dt}F_{0,1}^{\f{ 10\alpha-5}{6\alpha-5}}+F^{\f{4\alpha}{6\alpha-5}}_{0,1}G_{0,1}\leq CF_{0,1}^{\f{ 12\alpha-5}{6\alpha-5}}.
\ea
\ee
A combination of \eqref{3.20} and \eqref{4.26} leads to
$$\ba
& \f{d}{dt}\B[\|\Lambda^{ \alpha} u _{t}\|^{2}_{L^{2}}+\f{6\alpha-5}{  10\alpha-5 }F_{0,1}^{\f{ 10\alpha-5}{6\alpha-5}}\B]+\f12\|\Lambda^{2 \alpha}u _{t}\|_{L^{2}}^{2}+F^{\f{4\alpha}{6\alpha-5}}_{0,1}G_{0,1}\\
\leq & C\|\Lambda^{\alpha}u_{t}\|_{L^{2}}^{\f{2(12\alpha-5)}{10\alpha-5}}
+C\|\Lambda^{\alpha}u \|_{L^{2}}^{\f{2(12\alpha-5)}{6\alpha-5}}
+\eta\|\Lambda^{ \alpha}u \|^{\f{8\alpha}{ 6\alpha-5}}_{L^{2}} \|\Lambda^{2 \alpha}u \|_{L^{2}}^{2}+CF_{0,1}^{\f{ 12\alpha-5}{6\alpha-5}}\\
\leq & C\|\Lambda^{\alpha}u_{t}\|_{L^{2}}^{\f{2(12\alpha-5)}{10\alpha-5}}
 +C\eta F^{\f{4\alpha}{6\alpha-5}}_{0,1}G_{0,1}+CF_{0,1}^{\f{ 12\alpha-5}{6\alpha-5}},
\ea
$$
where we have used
$$F_{0,1}^{\f{ 12\alpha-5}{6\alpha-5}}\leq CF_{1,1}^{\f{ 12\alpha-5}{10\alpha-5}}.$$
Thus,
$$\ba
&\f{1}{2}\f{d}{dt}\B[\|\Lambda^{ \alpha} u _{t}\|^{2}_{L^{2}}+F_{0,1}^{\f{ 10\alpha-5}{6\alpha-5}}\B]+\f12\|\Lambda^{2 \alpha}u _{t}\|_{L^{2}}^{2}+\f12F^{\f{4\alpha}{6\alpha-5}}_{0,1}G_{0,1} \\
\leq & C\|\Lambda^{\alpha}u_{t}\|_{L^{2}}^{\f{2(12\alpha-5)}{10\alpha-5}}
 +CF_{1,1}^{\f{ 12\alpha-5}{10\alpha-5}},
\ea
$$
which means
\be\label{4.33v1}
\f{d}{dt}F_{1,1}+G_{1,1}\leq
CF_{1,1}^{\f{ 12\alpha-5}{10\alpha-5}}.
\ee
This together with Lemma \ref{hofflemma} implies that
\be
G_{1,1}\in L^{\f{6\alpha-5}{12\alpha-5}}, \|\Lambda^{2 \alpha}u _{t}\|_{L^{2}}^{2}\in L^{\f{6\alpha-5}{12\alpha-5}}.
\ee
It remains to show \eqref{4.13} with $p=2, q=0$. To this end, we deduce from
\eqref{3.31v1} with $r=2$ that
\be\ba\label{4.30}
&\f{1}{2}\f{d}{dt}\| u _{t}^{(2)}\|^{2}_{L^{2}}+\|\Lambda^{ \alpha}u _{t}^{(2)}\|_{L^{2}}^{2}  \\
\leq&C\|u_{t} \|
^{\f{2(14\alpha-5)}{8\alpha-5}}
_{L^{2}}+\|u_{t}^{(2)} \|
^{\f{2(14\alpha-5)}{12\alpha-5}}
_{L^{2}}+C\|\Lambda^{\alpha}u_{t}  \|_{L^{2}}^{\f{2(14\alpha-5)}{ 10\alpha-5}}+C\|\Lambda^{\alpha}u  \|_{L^{2}}^{\f{2(14\alpha-5)}{ 6\alpha-5}}\\&+2\eta\|\Lambda^{\alpha}u_{t} \|_{L^{2}}^{\f{4\alpha }{ 10\alpha-5}}
\|\Lambda^{2\alpha}u_{t} \|^{2}_{L^{2}}+2\eta\|\Lambda^{\alpha}u  \|_{L^{2}}^{\f{12\alpha }{  6\alpha-5}}
\|\Lambda^{2\alpha}u   \|^{2}_{L^{2}}.
\ea
\ee
It follows from \eqref{4.21v1} and
\eqref{4.33v1} that
$$\ba \f{d}{dt}F_{0,1}+G_{0,1}\leq CF_{0,1}^{\f{ 8\alpha-5}{6\alpha-5}},\\
\f{d}{dt}F_{1,1}+G_{1,1}\leq
CF_{1,1}^{\f{ 12\alpha-5}{10\alpha-5}},\\
\f{d}{dt}F_{ 1,0}+G_{1,0}\leq CF_{ 1,0}^{\f{ 10\alpha-5}{8\alpha-5}},\ea$$
which turns out that
\be\ba\label{4.31}
\f{6 \alpha-5}{ 12 \alpha-5}\f{d}{dt}F_{0,1}^{\f{ 12 \alpha-5}{6 \alpha-5}}+F_{0,1}^{\f{ 6 \alpha }{6 \alpha-5}}G_{0,1}\leq CF_{0,1}^{\f{ 14\alpha-5}{6\alpha-5}}\\
\f{10 \alpha-5}{ 12 \alpha-5}\f{d}{dt}F_{1,1}^{\f{ 12 \alpha-5}{10 \alpha-5}}+F_{1,1}^{\f{2 \alpha }{10 \alpha-5}}G_{1,1}\leq
CF_{1,1}^{\f{ 14\alpha-5}{10\alpha-5}}\\
\f{8 \alpha-5}{ 12 \alpha-5}\f{d}{dt}F_{1,0}^{\f{ 12 \alpha-5}{8 \alpha-5}}+F_{ 1,0}^{\f{ 4\alpha }{8\alpha-5}}G_{1,0}\leq CF_{ 1,0}^{\f{ 14\alpha-5}{8\alpha-5}}.\ea\ee
Summarizing \eqref{4.30} and \eqref{4.31}, we get
\be\ba
&\f{d}{dt}\B[\| u _{t}^{(2)}\|^{2}_{L^{2}}+\f{8 \alpha-5}{ 12 \alpha-5}F_{1,0}^{\f{ 12 \alpha-5}{8 \alpha-5}}+\f{6 \alpha-5}{ 12 \alpha-5}
F_{0,1}^{\f{ 12 \alpha-5}{6 \alpha-5}}+\f{10 \alpha-5}{ 12 \alpha-5}F_{1,1}^{\f{ 12 \alpha-5}{10 \alpha-5}}\B]\\&+\f{1}{2}\|\Lambda^{ \alpha}u _{t}^{(2)}\|_{L^{2}}^{2} +F_{0,1}^{\f{ 6 \alpha }{6 \alpha-5}}G_{0,1}+F_{1,1}^{\f{2 \alpha }{10 \alpha-5}}G_{1,1}\\
 \leq&C\|u_{t} \|
^{\f{2(14\alpha-5)}{8\alpha-5}}
_{L^{2}}+C\|u_{t}^{(2)} \|
^{\f{2(14\alpha-5)}{12\alpha-5}}
_{L^{2}}+C\|\Lambda^{\alpha}u_{t}  \|_{L^{2}}^{\f{2(14\alpha-5)}{ 10\alpha-5}}\\&+C\|\Lambda^{\alpha}u  \|_{L^{2}}^{\f{2(14\alpha-5)}{ 6\alpha-5}}+2\eta\|\Lambda^{\alpha}u_{t} \|_{L^{2}}^{\f{4\alpha }{ 10\alpha-5}}
\|\Lambda^{2\alpha}u_{t} \|^{2}_{L^{2}}+2\eta\|\Lambda^{\alpha}u  \|_{L^{2}}^{\f{12\alpha }{  6\alpha-5}}
\|\Lambda^{2\alpha}u   \|^{2}_{L^{2}}\\
&+CF_{0,1}^{\f{ 14\alpha-5}{6\alpha-5}}+CF_{1,1}^{\f{ 14\alpha-5}{10\alpha-5}}+CF_{ 1,0}^{\f{ 14\alpha-5}{8\alpha-5}}\\
 \leq& C\|u_{t}^{(2)} \|
^{\f{2(14\alpha-5)}{12\alpha-5}}
_{L^{2}} +CF_{0,1}^{\f{ 14\alpha-5}{6\alpha-5}}+CF_{1,1}^{\f{ 14\alpha-5}{10\alpha-5}}+CF_{ 1,0}^{\f{ 14\alpha-5}{8\alpha-5}}\\& +2\eta\|\Lambda^{\alpha}u_{t} \|_{L^{2}}^{\f{4\alpha }{ 10\alpha-5}}
\|\Lambda^{2\alpha}u_{t} \|^{2}_{L^{2}}+2\eta\|\Lambda^{\alpha}u  \|_{L^{2}}^{\f{12\alpha }{  6\alpha-5}}
\|\Lambda^{2\alpha}u   \|^{2}_{L^{2}},
\ea
\ee
where the definition in \eqref{4.16} was used.\\
Taking $\eta>0$ sufficient small, we get
\be\label{4.33}
\f{d}{dt}F_{2,0}+G_{2,0}\leq CF_{2,0}^{\f{ 14\alpha-5}{12\alpha-5}}.
\ee
From last section, we know that $\| u _{t}^{(2)}\|^{2}_{L^{2}}\in  L^{\f{6 \alpha-5}{ 12 \alpha-5}}.$ Moreover, we derive from
\eqref{4.21} and \eqref{4.29} that
$$
F_{0,1} \in L^1,F_{ 1,0} \in   L^{\f{ 6\alpha-5 }{ 8\alpha-5}}, F_{1,1} \in L^{\f{6\alpha-5}{10\alpha-5}},
$$
which turns out that
\be
F_{2,0}\in L^{\f{6 \alpha-5}{ 12 \alpha-5}}.
\ee
Then we apply Lemma \ref{hofflemma} to estimate \eqref{4.33} to obtain
$$
\|\Lambda^{ \alpha}u _{t}^{(2)}\|_{L^{2}}^{2}\in L^{\f{6 \alpha-5}{ 14 \alpha-5}}.
$$
Hence, we have verified \eqref{4.13} for $p+q=1$ and $p+q=2$.\\
Now, we suppose that \eqref{4.13} is valid for
 \be\label{4.41v1}
 p+q\leq  k-2= m+n -2.
 \ee
It is enough to show \eqref{4.13} for $p+q=k-1= m+n -1$. We will  divide the deduction of $F_{i,m+n-i-1}$  into four cases:
\begin{enumerate}[(1)]
      \item
Case 1: $i=0$;
 \item Case 2: $1\leq i\leq m+n -3$;
 \item Case 3: $i= m+n -2$;
\item Case 4: $i= m+n -1$.\\
\end{enumerate}
It worth pointing out that the derivation of four cases is in turn. The last case rests on the result of Case 3.

\underline{Case 1}. We notice that
\be\ba\label{4.36v1}
&F_{0, m+n -1}=\|\Lambda^{(m+n -1)\alpha}u \|_{L^{2}}^{2}+\f{6 \alpha-5}{ 2(m+n -1)\alpha+4 \alpha-5}
F_{0,1}^{\f{ 2(m+n -1)\alpha+4 \alpha-5}{6 \alpha-5}},\\
&G_{0,m+n -1}=\f12 \|\Lambda^{(m+n  )\alpha}u \|_{L^{2}}^{2}+\f12
F_{0,1}^{\f{2(m+n -1)\alpha-2\alpha }{6 \alpha-5}}G_{0,1}.
\ea\ee
By means of inductive hypothesis \eqref{4.41v1} and $\eqref{4.13}_{3}$, we know that $\|\Lambda^{(m+n -1)\alpha}u \|_{L^{2}}^{2}\in  L^{\f{6 \alpha-5}{ 2(m+n -1)\alpha+4 \alpha-5}}.$ Hence,  owing to $F_{0,1} \in L^1 $, we get $F_{0, m+n -1}\in  L^{\f{6 \alpha-5}{ 2(m+n -1)\alpha+4 \alpha-5}}.$\\
By virtue of \eqref{4.21v1}, we find
$$\ba \f{d}{dt}F_{0,1}+G_{0,1}\leq CF_{0,1}^{\f{ 8\alpha-5}{6\alpha-5}}.
\ea$$
Consequently, we readily check
\be\ba\label{4.38} \f{d}{dt}F_{0,1}^{\f{ 2(m+n -1)\alpha+4 \alpha-5}{6 \alpha-5}}+F_{0,1}^{\f{2(m+n -1)\alpha-2\alpha }{6 \alpha-5}}G_{0,1}\leq CF_{0,1}^{\f{   2(m+n -1)\alpha+6\alpha-5  }{6\alpha-5}}.
\ea\ee
In addition, thanks to \eqref{4.15v1}, we write
$$\ba
& \f{d}{dt} \| \Lambda^{(m+n -1)\alpha}u \|^{2}_{L^{2}}+\f12\| \Lambda^{ (m+n ) \alpha}u \|^{2}_{L^{2}}\\
\leq& C\|\Lambda^{ (m+n -1) \alpha}u \|_{L^{2}}
^{\f{2[ 2(m+n)\alpha+4\alpha-5]}
{ 2(m+n -1)\alpha+4\alpha-5}} +
C\|\Lambda^{\alpha}u \|^{\f{2[ 2(m+n)\alpha+4\alpha-5]}{  2 \alpha+4\alpha-5}}_{L^{2}}\\
\leq& C\|\Lambda^{ (m+n -1) \alpha}u \|_{L^{2}}
^{\f{2[ 2(m+n)\alpha+4\alpha-5]}
{ 2(m+n -1)\alpha+4\alpha-5}} +
CF_{0,1}^{\f{2[ 2(m+n)\alpha+4\alpha-5]}{  2 \alpha+4\alpha-5}}.
\ea
$$
Combining the latter relationship and \eqref{4.38}, we see that
\be\ba
& \f{d}{dt} \B[\| \Lambda^{(m+n -1)\alpha}u \|^{2}_{L^{2}}+\f{6 \alpha-5}{ 2(m+n -1)\alpha+4 \alpha-5}F_{0,1}^{\f{ 2(m+n -1)\alpha+4 \alpha-5}{6 \alpha-5}}\B]\\&+\f12\| \Lambda^{ (m+n ) \alpha}u \|^{2}_{L^{2}}+F_{0,1}^{\f{2(m+n -1)\alpha-2\alpha }{6 \alpha-5}}G_{0,1} \\
\leq &C\|\Lambda^{ (m+n -1) \alpha}u \|_{L^{2}}
^{\f{2[ 2(m+n)\alpha+4\alpha-5]}
{ 2(m+n -1)\alpha+4\alpha-5}} +C
F_{0,1}^{\f{  2(m+n)\alpha+4\alpha-5 }{  2 \alpha+4\alpha-5}}.
\ea
\ee
Thanks to the definition of $F_{0, m+n -1}$ in \eqref{4.36v1}, we find
$$
F_{0,1}^{\f{  2(m+n)\alpha+4\alpha-5 }{  2 \alpha+4\alpha-5}}\leq
CF_{0,m+n-1}^{\f{  2(m+n)\alpha+4\alpha-5 }{  2(m+n -1)\alpha+4 \alpha-5}}.
$$
Hence, we remark that
$$\ba \f{d}{dt}F_{0,m+n-1}+G_{0,m+n-1}\leq F_{0,m+n-1}^{\f{  2(m+n)\alpha+4\alpha-5 }
{ 2(m+n -1)\alpha+4\alpha-5}}.
\ea$$
Lemma \ref{hofflemma} and $F_{0, m+n -1}\in  L^{\f{6 \alpha-5}{ 2(m+n -1)\alpha+4 \alpha-5}} $  ensure that
$$G_{0,m+n-1}\in L^{\f{6\alpha-5}{ 2(m+n)\alpha+4\alpha-5}}, \| \Lambda^{ (m+n ) \alpha}u \|^{2}_{L^{2}}\in L^{\f{6\alpha-5}{ 2(m+n)\alpha+4\alpha-5}}.$$
The above analysis verifies  \eqref{4.13} with $p+q=k-1= m+n -1$ for Case 1.

\underline{Case 2}.  $1\leq i<m+n-2$.\\
Before going further, we write
\be\ba\label{4.41}
F_{i,m+n-i-1}= & \|\Lambda^{(m+n-i-1)\alpha}u_{t}^{(i)}\|_{L^{2}}^{2}\\&+ \sum_{\ell=0}^{i-1}\f{4\ell\alpha+2(m+n-i-1)\alpha+4 \alpha-5}{4i\alpha+2(m+n-i-1)\alpha+4 \alpha-5}
F_{\ell,m+n-i-1}^{\f{4i\alpha+2(m+n-i-1)\alpha+4 \alpha-5}{4\ell\alpha+2(m+n-i-1)\alpha+4 \alpha-5}}\\&+\sum_{\ell=0}^{i }\f{4\ell\alpha+6 \alpha-5}{4i\alpha+2(m+n-i-1)\alpha+4 \alpha-5}
F_{\ell,1}^{\f{4i\alpha+2(m+n-i-1)\alpha+4 \alpha-5}{4\ell\alpha+6 \alpha-5}},\\
G_{i,m+n-i-1}= &\f12\|\Lambda^{(m+n-i )\alpha}u_{t}^{(i)}\|_{L^{2}}^{2}+\f12\sum_{\ell=0}^{i-1}
F_{\ell,m+n-i-1}^{\f{4(i-\ell)\alpha }{4\ell\alpha+2(m+n-i-1)\alpha+4 \alpha-5}}G_{\ell,m+n-i-1}.
\ea\ee
According to inductive hypothesis \eqref{4.41v1} and $\eqref{4.13}_{3}$,  we assert that $\|\Lambda^{(m+n-i-1)\alpha}u_{t}^{(i)}\|_{L^{2}}^{2}\in L^{\f{6 \alpha-5}{4i\alpha+2(m+n-i-1)\alpha+4 \alpha-5}} $. Moreover, using the fact that $\ell+m+n-i-1\leq m+n -2$ and $\ell+1\leq i+1\leq m+n-2$, by   the inductive hypotheses  \eqref{4.41v1},  we arrive at $F_{\ell,m+n-i-1}\in L^{\f{6\alpha-5}{4\ell\alpha+2(m+n-i-1)\alpha+4\alpha-5}}$     and   $F_{\ell,1}\in L^{\f{6\alpha-5}{4\ell\alpha +6\alpha-5}}$. Hence, we have
\be\label{4.48}
F_{i,m+n-i-1}\in L^{\f{6 \alpha-5}{4i\alpha+2(m+n-i-1)\alpha+4 \alpha-5}}.
\ee
Making use of \eqref{4.41v1}, we get
\be\ba
&\f{d}{dt}F_{\ell,m+n-i-1}+G_{\ell,m+n-i-1}\leq CF_{\ell,m+n-i-1}^{\f{4\ell\alpha+2(m+n-i-1)\alpha+6 \alpha-5}{4\ell\alpha+2(m+n-i-1)\alpha+4 \alpha-5}},\ell \leq i-1,\\
&\f{d}{dt}F_{\ell,1}+G_{\ell,1}\leq CF_{\ell,1}^{\f{4\ell\alpha+ 8 \alpha-5}{4\ell\alpha+6 \alpha-5}},\ell \leq i.
\ea\ee
Some tedious manipulation yields
\be\ba\label{4.44}
 &c_{\ell,m+n-i-1}\f{d}{dt}F_{\ell,m+n-i-1}^{\f{4i\alpha+2(m+n-i-1)\alpha+4 \alpha-5}{4\ell\alpha+2(m+n-i-1)\alpha+4 \alpha-5}}+
F_{\ell,m+n-i-1}^{\f{4(i-\ell)\alpha }{4\ell\alpha+2(m+n-i-1)\alpha+4 \alpha-5}}G_{\ell,m+n-i-1}\leq C F_{\ell,m+n-i-1}^{\f{4i\alpha+2(m+n-i-1)\alpha+6 \alpha-5}{4\ell\alpha+2(m+n-i-1)\alpha+4 \alpha-5}},\\
& c_{\ell,1}\f{d}{dt}
F_{\ell,1}^{\f{4i\alpha+2(m+n-i-1)\alpha+4 \alpha-5}{4\ell\alpha+6 \alpha-5}}
+
F_{\ell,1}^{\f{4(i-\ell)\alpha+2(m+n-i-1)\alpha-2 \alpha}{4\ell\alpha+6 \alpha-5}}G_{\ell,1}\leq C F_{\ell,1}^{\f{4i\alpha+2(m+n-i-1)\alpha+6 \alpha-5}{4\ell\alpha+6 \alpha-5}},
\ea\ee
where
$$\ba
c_{\ell,m+n-i-1}=\f{4\ell\alpha+2(m+n-i-1)\alpha+4 \alpha-5}{4i\alpha+2(m+n-i-1)\alpha+4 \alpha-5},\\
c_{\ell,1}=\f{4\ell\alpha+6 \alpha-5}{4i\alpha+2(m+n-i-1)\alpha+4 \alpha-5}.
\ea$$
We deduce from
\eqref{4.13v2} that
\be\ba\label{4.4333}
&\f{d}{dt} \| \Lambda^{(m+n-i-1)\alpha}u _{t}^{(i)}\|^{2}_{L^{2}}+\f{1}{2}\| \Lambda^{(m+n-i )\alpha}u _{t}^{(r)}\|^{2}_{L^{2}}\\\leq&C\sum_{j }^{i-1} \|\Lambda^{ (m+n-i-1) \alpha}u_{t}^{(j)}\|_{L^{2}}
^{\f{2(4i\alpha+2(m+n-i)\alpha+4\alpha-5)}
{4j\alpha+2(m+n-i-1)\alpha+4\alpha-5}}+C\|\Lambda^{ (m+n-i-1) \alpha}u_{t}^{(i)}\|_{L^{2}}
^{\f{2(4i\alpha+2(m+n-i)\alpha+4\alpha-5)}
{4i\alpha+2(m+n-i-1)\alpha+4\alpha-5}}\\&+C\sum_{j }^{i}
\|\Lambda^{\alpha}u^{(i-j)}_{t} \|^{\f{2(4i\alpha+2(m+n-i)\alpha+4\alpha-5)}{4(i-j)\alpha+2 \alpha+4\alpha-5}}_{L^{2}}\\&+\eta\sum_{j>0 }^{i}\|\Lambda^{(m+n-i-1)\alpha}u^{(i-j )}_{t}\|^{\f{8\alpha j}{ 4(r-j)\alpha+2 (m+n-i-1)\alpha+4\alpha-5}}_{L^{2}}\|\Lambda^{(m+n-i)\alpha} u^{(i-j )}_{t}\|_{L^{2}}^{2}\\\leq&C\sum_{j }^{i-1} F_{j,m+n-i-1}
^{\f{ 4i\alpha+2(m+n-i)\alpha+4\alpha-5}
{4j\alpha+2(m+n-i-1)\alpha+4\alpha-5}}+C\|\Lambda^{ (m+n-i-1) \alpha}u_{t}^{(i)}\|_{L^{2}}
^{\f{2(4i\alpha+2(m+n-i)\alpha+4\alpha-5)}
{4i\alpha+2(m+n-i-1)\alpha+4\alpha-5}}\\&+C\sum_{j=0 }^{i}
F_{j,1}^{\f{ 4i\alpha+2(m+n-i)\alpha+4\alpha-5}{4j\alpha+2 \alpha+4\alpha-5}} +\eta\sum_{j=0 }^{i-1}F_{j,m+n-i-1}^{\f{4\alpha (i-j)}{ 4j\alpha+2 (m+n-i-1)\alpha+4\alpha-5}}G_{j,m+n-i-1},
\ea\ee
where the definitions of  $F_{j,1}, F_{j,m+n-i-1}, G_{j,m+n-i-1}$ in \eqref{4.41} were used.\\
In view of \eqref{4.44} and    \eqref{4.4333}, we infer that
\be\ba\label{4.46}
&\f{d}{dt} \B[\| \Lambda^{(m+n-i-1)\alpha}u _{t}^{(i)}\|^{2}_{L^{2}}+\sum_{\ell=0}^{i-1 }c_{\ell,m+n-i-1}F_{\ell,m+n-i-1}^{\f{4i\alpha+2(m+n-i-1)\alpha+4 \alpha-5}{4\ell\alpha+2(m+n-i-1)\alpha+4 \alpha-5}}\\&+\sum_{\ell=0}^{i } c_{\ell,1}F_{\ell,1}^{\f{4i\alpha+2(m+n-i-1)\alpha+4 \alpha-5}{4\ell\alpha+6 \alpha-5}}
\B]\\&+\f{1}{2}\| \Lambda^{(m+n-i )\alpha}u _{t}^{(r)}\|^{2}_{L^{2}}+F_{\ell,m+n-i-1}^{\f{4(i-\ell)\alpha }{4\ell\alpha+2(m+n-i-1)\alpha+4 \alpha-5}}G_{\ell,m+n-i-1}  \\\leq&C\sum_{j }^{i-1} F_{j,m+n-i-1}
^{\f{ 4i\alpha+2(m+n-i)\alpha+4\alpha-5}
{4j\alpha+2(m+n-i-1)\alpha+4\alpha-5}}+C\|\Lambda^{ (m+n-i-1) \alpha}u_{t}^{(i)}\|_{L^{2}}
^{\f{2(4i\alpha+2(m+n-i)\alpha+4\alpha-5)}
{4i\alpha+2(m+n-i-1)\alpha+4\alpha-5}}\\&+C\sum_{j=0 }^{i}
F_{j,1}^{\f{ 4i\alpha+2(m+n-i)\alpha+4\alpha-5}{4j\alpha+2 \alpha+4\alpha-5}} +\eta\sum_{j=0 }^{i-1}F_{j,m+n-i-1}^{\f{4\alpha (i-j)}{ 4j\alpha+2 (m+n-i-1)\alpha+4\alpha-5}}G_{j,m+n-i-1}.
\ea\ee
On account of \eqref{4.41}, we discover that
 $$\ba&\sum_{j }^{i-1} F_{j,m+n-i-1}
^{\f{ 4i\alpha+2(m+n-i)\alpha+4\alpha-5}
{4j\alpha+2(m+n-i-1)\alpha+4\alpha-5}}\leq C\sum_{j }^{i-1} F_{i,m+n-i-1}^{\f{4i\alpha+2(m+n-i)\alpha+4\alpha-5}{4i\alpha+2(m+n-i-1)\alpha+4 \alpha-5}}\leq C F_{i,m+n-i-1}^{\f{4i\alpha+2(m+n-i)\alpha+4\alpha-5}{4i\alpha+2(m+n-i-1)\alpha+4 \alpha-5}},\\&\sum_{j=0 }^{i}
F_{j,1}^{\f{  4i\alpha+2(m+n-i)\alpha+4\alpha-5 }{4j\alpha+2 \alpha+4\alpha-5}}
\leq C\sum_{j=0 }^{i}
F_{i,m+n-i-1}^{\f{4i\alpha+2(m+n-i)\alpha+4\alpha-5}{4i\alpha+2(m+n-i-1)\alpha+4 \alpha-5}}\leq C
F_{i,m+n-i-1}^{\f{4i\alpha+2(m+n-i)\alpha+4\alpha-5}{4i\alpha+2(m+n-i-1)\alpha+4 \alpha-5}}.\ea$$
Inserting this into \eqref{4.46}, we end up with
\be\label{4.45}
\f{d}{dt}F_{i,m+n-i-1}+G_{i,m+n-i-1}\leq
CF_{i,m+n-i-1}^{\f{4i\alpha+2(m+n-i)\alpha+4\alpha-5}{4i\alpha+2(m+n-i-1)\alpha+4 \alpha-5}}.
\ee
Using
\eqref{4.48} and Lemma \ref{hofflemma}, we conclude by \eqref{4.45} that
\be
G_{i,m+n-i-1}\in L^{\f{6\alpha-5}{4i\alpha+2(m+n-i)\alpha+4\alpha-5}}, \| \Lambda^{(m+n-i )\alpha}u _{t}^{(i)}\|^{2}_{L^{2}}\in L^{\f{6\alpha-5}{4i\alpha+2(m+n-i)\alpha+4\alpha-5}}.
\ee
The validity of  \eqref{4.13} with $p+q=k-1= m+n -1$ for Case 2 is thus confirmed.

\underline{Case 3}. $i=m+n-2$.\\
In this case, we set
\be\ba\label{4.49}
&F_{ m+n-2,1}= \|\Lambda^{ \alpha}u_{t}^{(m+n-2)}\|_{L^{2}}^{2}+
\sum_{\ell=0}^{m+n-3}\f{4\ell\alpha+6\alpha-5}{4(m+n-2)\alpha+6 \alpha-5}
F_{\ell,1}^{\f{4(m+n-2)\alpha+6 \alpha-5}{4\ell\alpha+6\alpha-5}}, \\
&G_{m+n-2,  1}= \f12 \|\Lambda^{2\alpha}u_{t}^{(m+n-2)}\|_{L^{2}}^{2}+\f12\sum_{\ell=0}^{m+n-3}
F_{\ell,1}^{\f{4(i-\ell)\alpha }{4\ell\alpha+2 \alpha+4 \alpha-5}}G_{\ell,1}.
\ea\ee
It is apparent from induction hypotheses  \eqref{4.41v1} and $\eqref{4.13}_{3}$  that $\|\Lambda^{ \alpha}u_{t}^{(m+n-2)}\|_{L^{2}}^{2}\in L^{\f{6\alpha-5}{4(m+n-2)\alpha +6 \alpha-5}}$.
In addition, since $\ell+1\leq m+n-2$, we conclude by \eqref{4.41v1} that   $F_{\ell,1}\in L^{\f{6\alpha-5}{4\ell\alpha +6\alpha-5}}$.
Hence, we find
\be\label{4.56}
F_{ m+n-2,1}\in   L^{\f{6\alpha-5}{4(m+n-2)\alpha +6 \alpha-5}}.
\ee
Then we deduce from
\eqref{4.13v2} that
\be\ba
&\f{d}{dt} \| \Lambda^{ \alpha}u _{t}^{(m+n-2)}\|^{2}_{L^{2}}+\f{1}{2}\| \Lambda^{2\alpha}u _{t}^{(m+n-2)}\|^{2}_{L^{2}}\\\leq&C\sum_{j }^{m+n-3} \|\Lambda^{ \alpha}u_{t}^{(j)}\|_{L^{2}}
^{\f{2(4(m+n-2)\alpha+8\alpha-5)}
{4j\alpha+6\alpha-5}}+C\|\Lambda^{   \alpha}u_{t}^{(m+n-2)}\|_{L^{2}}
^{\f{2(4(m+n-2)\alpha+8\alpha-5)}
{4(m+n-2)\alpha+6\alpha-5}}\\&+C\sum_{j }^{m+n-2}
\|\Lambda^{\alpha}u^{(m+n-2-j)}_{t} \|^{\f{2(4(m+n-2)\alpha+8\alpha-5)}{4(m+n-2-j)\alpha
+6\alpha-5}}_{L^{2}}\\&
+\eta\sum_{j>0 }^{m+n-2}
\|\Lambda^{ \alpha}u^{(m+n-2-j )}_{t}\|^{\f{8\alpha j}{ 4(m+n-2-j)\alpha+6\alpha-5}}_{L^{2}}\|\Lambda^{2\alpha} u^{(m+n-2-j )}_{t}\|_{L^{2}}^{2}\\
\leq&C\sum_{j }^{m+n-3} \|\Lambda^{ \alpha}u_{t}^{(j)}\|_{L^{2}}
^{\f{2(4(m+n-2)\alpha+8\alpha-5)}
{4j\alpha+6\alpha-5}}+C\|\Lambda^{   \alpha}u_{t}^{(m+n-2)}\|_{L^{2}}
^{\f{2(4(m+n-2)\alpha+8\alpha-5)}
{4(m+n-2)\alpha+6\alpha-5}} \\&+\eta\sum_{j>0 }^{m+n-2}
\|\Lambda^{ \alpha}u^{(m+n-2-j )}_{t}\|^{\f{8\alpha j}{ 4(m+n-2-j)\alpha+6\alpha-5}}_{L^{2}}\|\Lambda^{2\alpha} u^{(m+n-2-j )}_{t}\|_{L^{2}}^{2}\\\leq&
C\sum_{j }^{m+n-3} F_{j,1}
^{\f{ 4(m+n-2)\alpha+8\alpha-5 }
{4j\alpha+6\alpha-5}}+C\|\Lambda^{   \alpha}u_{t}^{(m+n-2)}\|_{L^{2}}
^{\f{2(4(m+n-2)\alpha+8\alpha-5)}
{4(m+n-2)\alpha+6\alpha-5}} \\&+\eta\sum_{j=0 }^{m+n-3} F_{j,1}^{\f{4\alpha (m+n-2-j ) }{ 4j\alpha+6\alpha-5}}G_{j,1}.
\ea\ee
Owing to \eqref{4.49}, we get
$$\sum_{j }^{m+n-3}F_{j,1}
^{\f{ 4(m+n-2)\alpha+8\alpha-5 }
{4j\alpha+6\alpha-5}}\leq CF_{ m+n-2,1}^{\f{ 4(m+n-2)\alpha+8\alpha-5 }
{4(m+n-2)\alpha+6 \alpha-5}},$$
which turns out that
\be\ba\label{4.52}
&\f{d}{dt} \| \Lambda^{ \alpha}u _{t}^{(m+n-2)}\|^{2}_{L^{2}}+\f{1}{2}\| \Lambda^{2\alpha}u _{t}^{(m+n-2)}\|^{2}_{L^{2}}
\\\leq&
CF_{ m+n-2,1}^{\f{ 4(m+n-2)\alpha+8\alpha-5 }
{4(m+n-2)\alpha+6 \alpha-5}}+C\|\Lambda^{   \alpha}u_{t}^{(m+n-2)}\|_{L^{2}}
^{\f{2(4(m+n-2)\alpha+8\alpha-5)}
{4(m+n-2)\alpha+6\alpha-5}} +\eta\sum_{j=0 }^{m+n-3} F_{j,1}^{\f{4\alpha (m+n-2-j ) }{ 4j\alpha+6\alpha-5}}G_{j,1}.
\ea\ee
Since $j+1\leq m+n-2$,   by     the induction hypotheses \eqref{4.41v1}, we observe that
$$
\f{d}{dt}F_{j,1}+G_{j,1}\leq CF_{j,1}^{\f{4j\alpha+ 8 \alpha-5}{4j\alpha+6 \alpha-5}},
$$
from which it follows that
\be\label{4.53}
\f{d}{dt}
F_{j,1}^{\f{4(m+n-2)\alpha+6 \alpha-5}{4j\alpha+6\alpha-5}}+ F_{j,1}^{\f{4\alpha (m+n-2-j ) }{ 4j\alpha+6\alpha-5}}G_{j,1}\leq CF_{j,1}^{\f{4\alpha (m+n-2)+ 8 \alpha-5}{4j\alpha+6 \alpha-5}}.
\ee
It follows from \eqref{4.52} and \eqref{4.53} that
\be\ba
& \f{d}{dt}\B[ \| \Lambda^{ \alpha}u _{t}^{(m+n-2)}\|^{2}_{L^{2}}+ \sum_{j=0 }^{m+n-3}\f{4j\alpha+6\alpha-5}{4(m+n-2)\alpha+6 \alpha-5}F_{j,1}^{\f{4(m+n-2)\alpha+6 \alpha-5}{4j\alpha+6\alpha-5}}\B]
\\&+\f12\| \Lambda^{2\alpha}u _{t}^{(m+n-2)}\|^{2}_{L^{2}}+ \sum_{j=0 }^{m+n-3}F_{j,1}^{\f{4\alpha (m+n-2-j ) }{ 4j\alpha+6\alpha-5}}G_{j,1}
\\\leq&
CF_{ m+n-2,1}^{\f{ 4(m+n-2)\alpha+8\alpha-5 }
{4(m+n-2)\alpha+6 \alpha-5}}+C\|\Lambda^{   \alpha}u_{t}^{(m+n-2)}\|_{L^{2}}
^{\f{2(4(m+n-2)\alpha+8\alpha-5)}
{4(m+n-2)\alpha+6\alpha-5}}  +\eta\sum_{j=0 }^{m+n-3} F_{j,1}^{\f{4\alpha (m+n-2-j ) }{ 4j\alpha+6\alpha-5}}G_{j,1},
\ea\ee
which means that
\be\ba\label{4.60}
&\f{d}{dt}F_{ m+n-2,1}+G_{ m+n-2,1}\leq CF_{ m+n-2,1}^{\f{ 4(m+n-2)\alpha+8\alpha-5 }
{4(m+n-2)\alpha+6 \alpha-5}}.
\ea\ee
By virtue of
Lemma \ref{hofflemma} and  \eqref{4.56}, we conclude by \eqref{4.60} that
$$G_{ m+n-2,1}\in L^{\f{6 \alpha-5}{4(m+n-2)\alpha+8\alpha-5 }}, \|\Lambda^{ 2\alpha}u _{t}^{(m+n-2)}\|_{L^{2}}^{2}\in L^{\f{6 \alpha-5}{4(m+n-2)\alpha+8\alpha-5 }}.$$
We show \eqref{4.13} with $p+q=k-1= m+n -1$ for Case 3.

\underline{Case 4}. $i=m+n-1$.\\
We denote
\be\ba\label{4.61v1}
&F_{ m+n-1,0}= \| u_{t}^{(m+n-1)}\|_{L^{2}}^{2}+\sum_{\ell=0}^{m+n-2}
c_{\ell,0}F_{\ell,0}^{\f{4(m+n-1)\alpha+4 \alpha-5}{4\ell\alpha+4\alpha-5}}+\sum_{\ell=0}^{m+n-2}c_{\ell,1}
F_{\ell,1}^{\f{4(m+n-1)\alpha+4 \alpha-5}{4\ell\alpha+6\alpha-5}}, \\
&G_{m+n-1,  0}= \f12\|\Lambda^{ \alpha}u_{t}^{(m+n-1)}\|_{L^{2}}^{2}+\f12\sum_{\ell=0}^{m+n-2}
c_{\ell,1}F_{\ell,1}^{\f{4(m+n-1-\ell)\alpha-2 \alpha }{4\ell\alpha+2 \alpha+4 \alpha-5}}G_{\ell,1},
\ea\ee
where
$$\ba
c_{\ell,0}=\f{4\ell\alpha+4\alpha-5}{4(m+n-1)\alpha+4 \alpha-5},\quad
c_{\ell,1}=\f{4\ell\alpha+6\alpha-5}{4(m+n-1)\alpha+4 \alpha-5}.
\ea$$
Last section guarantees that
$$\| u_{t}^{(m+n-1)}\|_{L^{2}}^{2}\in L^{\f{6 \alpha-5}{4(m+n-1)\alpha+4 \alpha-5}}.$$
As
$\ell+0\leq m+n-2$, we conclude by the induction hypotheses \eqref{4.41v1} that $F_{\ell,0}\in L^{\f{6\alpha-5}{4\ell\alpha +4\alpha-5}}$ and $F_{\ell,1}\in L^{\f{6\alpha-5}{4\ell\alpha +6\alpha-5}}$.
It is clear that
$$
\sum_{\ell=0}^{m+n-2}
F_{\ell,1}^{\f{4(m+n-1)\alpha+4 \alpha-5}{4\ell\alpha+6\alpha-5}}
=\sum_{\ell=0}^{m+n-3}
F_{\ell,1}^{\f{4(m+n-1)\alpha+4 \alpha-5}{4\ell\alpha+6\alpha-5}}+F_{m+n-2,1}^{\f{4(m+n-1)\alpha+4 \alpha-5}{4(m+n-2)\alpha+6\alpha-5}}.
$$
With the help of \eqref{4.56} in Case 3, we see that $F_{ m+n-2,1}\in   L^{\f{6\alpha-5}{4(m+n-2)\alpha +6 \alpha-5}}$.
Hence, we have
\be\label{4.61}
F_{ m+n-1,0}\in L^{\f{6 \alpha-5}{4(m+n-1)\alpha+4 \alpha-5}}.\ee
Combining
\eqref{3.31}  and \eqref{4.61}, we find
 \be\ba\label{4.62}
&\f{d}{dt}\| u _{t}^{(m+n-1)}\|^{2}_{L^{2}}+\f{1}{2}\|\Lambda^{ \alpha}u _{t}^{(m+n-1)}\|_{L^{2}}^{2} \\
\leq&C\sum_{j>0}\|u_{t}^{(j)}\|
^{\f{2(4(m+n-1)\alpha+6\alpha-5)}{4j\alpha+4\alpha-5}}
_{L^{2}}
+C\sum_{j>0}\|\Lambda^{\alpha}u_{t}^{(m+n-1-j)} \|_{L^{2}}^{\f{2(4(m+n-1)\alpha+6\alpha-5)}{4(m+n-1-j)\alpha+6\alpha-5}}
\\&+\varepsilon\sum_{j>0}\|\Lambda^{\alpha}u_{t}^{(m+n-1-j)} \|_{L^{2}}^{\f{4\alpha(2j-1)}{  4(m+n-1-j)\alpha+6\alpha-5}}
\|\Lambda^{2\alpha}u_{t}^{(m+n-1-j)} \|^{2}_{L^{2}}\\
\leq&C\sum_{j=1}^{m+n-2}F_{j,0}
^{\f{ 4(m+n-1)\alpha+6\alpha-5 }{4j\alpha+4\alpha-5}}
+C\| u _{t}^{(m+n-1)}\| _{L^{2}}^{\f{ 2[4(m+n-1)\alpha+6\alpha-5] }{4(m+n-1)\alpha+4\alpha-5}}
+C\sum_{j=0}^{m+n-2}F_{j,1} ^{\f{ 4(m+n-1)\alpha+6\alpha-5 }{4j\alpha+6\alpha-5}}
\\&+\eta\sum_{j=0}^{m+n-2}F_{j,1}^{\f{  4\alpha(m+n-1-j)-2\alpha }{  4 j\alpha+6\alpha-5}}G_{j,1}.
\ea
\ee
By virtue of the definition of $F_{ m+n-1,0}$  in \eqref{4.61v1}, we have
\be\ba
\sum_{j=1}^{m+n-2}F_{j,0}
^{\f{ 4(m+n-1)\alpha+6\alpha-5 }{4j\alpha+4\alpha-5}}\leq C F_{ m+n-1,0}^{\f{4(m+n-1)\alpha+6\alpha-5}{4(m+n-1)\alpha+4 \alpha-5}},\\
\sum_{j=1}^{m+n-3}F_{j,1}
^{\f{ 4(m+n-1)\alpha+6\alpha-5 }{4j\alpha+6\alpha-5}}\leq C F_{ m+n-1,0}^{\f{4(m+n-1)\alpha+6\alpha-5}{4(m+n-1)\alpha+4 \alpha-5}},\\
F_{ m+n-2,1}^{\f{ 4(m+n-2)\alpha+10\alpha-5}
{4(m+n-2)\alpha+6 \alpha-5}}\leq C F_{ m+n-1,0}^{\f{4(m+n-1)\alpha+6\alpha-5}{4(m+n-1)\alpha+4 \alpha-5}}.
 \ea\ee
Plugging this into \eqref{4.62}, we arrive at
 \be\ba\label{4.58}
&\f{d}{dt}\| u _{t}^{(m+n-1)}\|^{2}_{L^{2}}+\f{1}{2}\|\Lambda^{ \alpha}u _{t}^{(m+n-1)}\|_{L^{2}}^{2} \\
\leq& C F_{ m+n-1,0}^{\f{4(m+n-1)\alpha+6\alpha-5}{4(m+n-1)\alpha+4 \alpha-5}}
+C\| u _{t}^{(m+n-1)}\| _{L^{2}}^{\f{ 2[4(m+n-1)\alpha+6\alpha-5] }{4(m+n-1)\alpha+4\alpha-5}}
 +\eta\sum_{j=0}^{m+n-2}F_{j,1}^{\f{  4\alpha(m+n-1-j)-2\alpha }{  4 j\alpha+6\alpha-5}}G_{j,1}.
\ea
\ee
For $j+1\leq m+n-2$,   by  the induction hypotheses \eqref{4.41v1}, we observe that
$$\ba
&\f{d}{dt}F_{j,1}+G_{j,1}\leq CF_{j,1}^{\f{4j\alpha+ 8 \alpha-5}{4j\alpha+6 \alpha-5}},
\ea$$
which implies that
\be\label{4.65}
c_{j,1}\f{d}{dt}
F_{j,1}^{\f{4(m+n-1)\alpha+4 \alpha-5}{4j\alpha+6\alpha-5}}+F_{j,1}^{\f{4(m+n-1-j)\alpha-2 \alpha }{4j\alpha+6 \alpha-5}}G_{j,1}\leq CF_{j,1}^{\f{4(m+n-1)\alpha + 6 \alpha-5}{4j\alpha+6 \alpha-5}}\leq C F_{ m+n-1,0}^{\f{4(m+n-1)\alpha+6\alpha-5}{4(m+n-1)\alpha+4 \alpha-5}}.
\ee
For $j= m+n-2$, we apply \eqref{4.60} in Case 3 to deduce that
\be\ba
&\f{d}{dt}F_{ m+n-2,1}+G_{ m+n-2,1}\leq CF_{ m+n-2,1}^{\f{ 4(m+n-2)\alpha+8\alpha-5 }
{4(m+n-2)\alpha+6 \alpha-5}},
\ea\ee
which leads to
\be\ba\label{4.66}
&\f{d}{dt}F_{ m+n-2,1}+F_{m+n-2,1}^{\f{   2\alpha }{  4 (m+n-2)\alpha+6\alpha-5}}G_{ m+n-2,1}\leq CF_{ m+n-2,1}^{\f{ 4(m+n-2)\alpha+10\alpha-5}
{4(m+n-2)\alpha+6 \alpha-5}}\leq C F_{ m+n-1,0}^{\f{4(m+n-1)\alpha+6\alpha-5}{4(m+n-1)\alpha+4 \alpha-5}}.
\ea\ee
For $j+0\leq m+n-2$,  in light of the induction hypotheses    \eqref{4.13}, we know that
$$\ba
&\f{d}{dt}F_{j,0}+G_{j,0}\leq CF_{j,0}^{\f{4j\alpha+ 6 \alpha-5}{4j\alpha+4 \alpha-5}},
\ea$$
which gives that
\be\ba\label{4.63}
&c_{j,0}\f{d}{dt}F_{j,0}^{\f{4(m+n-1)\alpha+4 \alpha-5}{4j\alpha+4\alpha-5}}+F_{j,0}^{\f{4(m+n-1-j)\alpha }{4j\alpha+4\alpha-5}}G_{j,0}\leq CF_{j,0}^{\f{4(m+n-1 )\alpha+ 6 \alpha-5}{4j\alpha+4 \alpha-5}}\leq C F_{ m+n-1,0}^{\f{4(m+n-1)\alpha+6\alpha-5}{4(m+n-1)\alpha+4 \alpha-5}}.
\ea\ee
Collecting the estimates \eqref{4.58}, \eqref{4.65}, \eqref{4.66} and \eqref{4.63}, we end up with
\be\ba\label{4.64}
& \f{d}{dt}\B[\| u _{t}^{(m+n-1)}\|^{2}_{L^{2}} +\sum_{\ell=0}^{m+n-2}
c_{\ell,0}F_{\ell,0}^{\f{4(m+n-1)\alpha+4 \alpha-5}{4\ell\alpha+4\alpha-5}}+\sum_{\ell=0}^{m+n-2}
c_{\ell,1}F_{\ell,1}^{\f{4(m+n-1)\alpha+4 \alpha-5}{4\ell\alpha+6\alpha-5}} \B]\\&+\sum_{j=0}^{m+n-3}F_{j,1}^{\f{4(m+n-1-j)\alpha-2 \alpha }{4j\alpha+6 \alpha-5}}G_{j,1}+F_{m+n-2,1}^{\f{   2\alpha }{  4 (m+n-2)\alpha+6\alpha-5}}G_{ m+n-2,1} +\f12\|\Lambda^{ \alpha}u _{t}^{(m+n-1)}\|_{L^{2}}^{2} \\
\leq& C F_{ m+n-1,0}^{\f{4(m+n-1)\alpha+6\alpha-5}{4(m+n-1)\alpha+4 \alpha-5}}
+C\| u _{t}^{(m+n-1)}\| _{L^{2}}^{\f{ 2[4(m+n-1)\alpha+6\alpha-5] }{4(m+n-1)\alpha+4\alpha-5}}
 +\eta\sum_{j=0}^{m+n-2}F_{j,1}^{\f{  4\alpha(m+n-1-j)-2\alpha }{  4 j\alpha+6\alpha-5}}G_{j,1},
\ea
\ee
that is
\be\ba
\f{d}{dt}F_{ m+n-1,0}+G_{ m+n-1,0}\leq C F_{ m+n-1,0}^{\f{4(m+n-1)\alpha+6\alpha-5}{4(m+n-1)\alpha+4 \alpha-5}}.
\ea
\ee
Here, we omit the terms based on $F_{j,0}^{\f{4(m+n-1-j)\alpha }{4j\alpha+4\alpha-5}}G_{j,0}$ on the left hand side of above inequality.
It is a position to apply
Lemma \ref{hofflemma} and  \eqref{4.61} to get
\be\ba
G_{ m+n-1,0}\in L^{\f{6\alpha-5}{4(m+n-1)\alpha+6\alpha-5}},
\|\Lambda^{ \alpha}u _{t}^{(m+n-1)}\|_{L^{2}}^{2}\in L^{\f{6\alpha-5}{4(m+n-1)\alpha+6\alpha-5}}.
\ea
\ee
We therefore prove \eqref{4.13} with $p+q=k-1= m+n -1$ for Case 4.

Summarily, no mater in which case, we always have
\be\ba
\|\Lambda^{ (m+n-i)\alpha}u _{t}^{(i)}\|_{L^{2}}^{2}\in L^{\f{6\alpha-5}{4i \alpha+2(m+n-i)\alpha+4\alpha-5}},
\ea
\ee

$$\|\Lambda^{n}u^{(m)}_{t}\|_{L^{2}}  \leq  C  \| u^{(m)}_{t}\|_{L^{2}}^{ \f{2\alpha-1}{2\alpha}} \|\Lambda^{2n\alpha }u^{(m)}_{t}\|_{L^{2}}^{\f{1}{2\alpha}}.$$
$$\ba
&\int_{0}^{T}\|\Lambda^{n}u^{(m)}_{t}\|^{\f{2(6\alpha-5)}{4m\alpha+2n+4\alpha-5}}_{L^{2}} dt \\
\leq&  C  \int_{0}^{T}\| u^{(m)}_{t}\|_{L^{2}}^{ \f{(2\alpha-1)(6\alpha-5)}{ \alpha(4m\alpha+2n+4\alpha-5)}} \|\Lambda^{2n\alpha }u^{(m)}_{t}\|_{L^{2}}^{\f{ (6\alpha-5)}{ (4m\alpha+2n+4\alpha-5) \alpha}}dt\\
\leq&  C \B(\int_{0}^{T}\| u^{(m)}_{t}\|_{L^{2}}^{ \f{2(6\alpha-5)}{  4m\alpha +4\alpha-5  }} dt\B) ^{\f{2\alpha(4m\alpha+2n+4\alpha-5)}{(2\alpha-1)(4m\alpha +4\alpha-5 )}} \B(\int_{0}^{T}  \|\Lambda^{2n\alpha }u^{(m)}_{t}\|_{L^{2}}^{\f{2(6\alpha-5)}{ (4m\alpha+4n\alpha+4\alpha-5) }}dt \B)^{1-\f{2\alpha(4m\alpha+2n+4\alpha-5)}{(2\alpha-1)(4m\alpha +4\alpha-5 )}}.\ea$$

The proof of this theorem is completed.
\end{proof}
Finally, we present the proof of Corollary \ref{coro} via interpolation.
\begin{proof}[Proof of Corollary \ref{coro}]
Assume for a while we have proved that
\be\label{while}
\int_{0}^{T}\|\Lambda^{k}u\|
^{\f{1}{k +1}}_{L^{\infty}}dt<\infty.
\ee
(1) The interpolation inequality and Sobolev embedding theorem give
$$\ba
\| u\|_{L^{q}}\leq \| u\|_{L^{6}}^{\f6q}\| u\|^{1-\f6q}_{L^{\infty}}\leq C\| \nabla u\|_{L^{2}}^{\f6q}\| u\|^{1-\f6q}_{L^{\infty}}, ~6\leq q \leq\infty.
\ea$$
After performing a time integration, one conclude by the H\"older inequality that
$$\ba
\int_{0}^{T}\| u\|_{L^{q}}^{\f{q}{q-3}}dt \leq C \B(\int_{0}^{T}\|\nabla u\|_{L^{2}}^{2}dt\B)^{\f{3}{q-3}}\B( \int_{0}^{T} \|u\|_{L^{\infty}}dt\B)^{1-\f{3}{q-3}}.
\ea$$
As a result, by \eqref{while},  we get $  u   \in L^{\f{q}{q-3}}(0,T;L^{q} (\mathbb{R}^{3})), ~6\leq q\leq\infty.$

(2)
We derive from the interpolation inequality that
$$\ba
\|\Lambda^{k}u\|_{L^{q}}\leq \|\Lambda^{k}u\|_{L^{2}}^{\f2q}\|\Lambda^{k}u\|^{1-\f2q}_{L^{\infty}}, ~2\leq q \leq\infty.
\ea$$
Integrating the above inequality in time and using the H\"older inequality, one checks that
\be\ba
\int_{0}^{T}\|\Lambda^{k}u\|_{L^{q}}^{\f{q}{kq+q-3}}dt\leq & \int_{0}^{T}\|\Lambda^{k}u\|_{L^{2}}^{\f{2}{kq+q-3}}\|\Lambda^{k}u\|^{\f{q-2}{kq+q-3}}_{L^{\infty}}dt
\\
\leq& \B(\int_{0}^{T}\|\Lambda^{k}u\|_{L^{2}}^{\f{2}{2k -1}}dt\B)^{\f{2k -1}{kq+q-3}}\B(\int_{0}^{T}\|\Lambda^{k}u\|
^{\f{1}{k +1}}_{L^{\infty}}dt
\B) ^{1-\f{2k -1}{kq+q-3}}.
\ea\ee
Consequently, with the help of \eqref{1.1} with $\alpha=1$  in Theorem \ref{the1.1} and  \eqref{while},  we find
$\Lambda^{k}u   \in L^{\f{q}{q(k+1)-3}}(0,T;L^{q} (\mathbb{R}^{3})),~k\geq1,~2\leq q\leq\infty.$\\
It remains to prove
the inequality \eqref{while} we have assumed. Indeed, invoking the Gagliardo-Nirenberg inequality and Sobolev embedding, we have
$$\|\Lambda^{k}u\|
^{ }_{L^{\infty}}\leq C\| u\|
^{\f{1}{2(k+1)} }_{L^{6}} \|\Lambda^{k+2}u\|
^{ \f{2k+1}{2(k+1)} }_{L^{2}}\leq  C\| \nabla u\|
^{\f{1}{2(k+1)} }_{L^{2}} \|\Lambda^{k+2}u\|
^{ \f{2k+1}{2(k+1)} }_{L^{2}}.$$
By integrating it with respect to time, we observe that
$$\ba
\int_{0}^{T}\|\Lambda^{k}u\|
^{\f{1}{k+1} }_{L^{\infty}}dt\leq&   C\int_{0}^{T}\| \nabla u\|
^{\f{1}{2(k+1)^{2}} }_{L^{2}} \|\Lambda^{k+2}u\|
^{ \f{2k+1}{2(k+1)^{2}} }_{L^{2}}dt\\
\leq&   C\B(\int_{0}^{T}\| \nabla u\|
^{2 }_{L^{2}}dt\B)^{\f{1}{4(k+1)^{2}}} \B(\int_{0}^{T}\|\Lambda^{k+2}u\|
^{ \f{2 }{2 k+3} }_{L^{2}}dt\B)^{1-\f{1}{4(k+1)^{2}}}.
\ea$$
It is a position to apply \eqref{1.1} with $\alpha=1$  in Theorem \ref{the1.1} to get assertion \eqref{while}.
This proves the corollary.
\end{proof}

\end{document}